\theoremstyle{definition}
\newtheorem{definition}{Definition}[section]
\newtheorem{theorem}[definition]{Theorem}
\newtheorem*{theorem*}{Conjecture}
\newtheorem{lemma}[definition]{Lemma}
\newtheorem{corollary}[definition]{Corollary}
\theoremstyle{remark}
\newtheorem{remark}[definition]{Remark}
\newtheorem{example}[definition]{Example}
\newcounter{enumctr}
\begin{document}
\title{The oscillatory solutions of multi-order fractional differential equations}
\author{Ha Duc Thai\footnote{\tt hdthai@math.ac.vn, \rm Institute of Mathematics, Vietnam Academy of Science and Technology, 18 Hoang Quoc Viet, 10307 Ha Noi, Vietnam},
	Hoang The Tuan\footnote{\tt httuan@math.ac.vn, \rm Institute of Mathematics, Vietnam Academy of Science and Technology, 18 Hoang Quoc Viet, 10307 Ha Noi, Viet Nam}}
 \date{} 
\maketitle 
	\begin{abstract} 
This paper systematically treats the asymptotic behavior of many (linear/nonlinear) classes of higher-order fractional differential equations with multiple terms. To do this, we utilize the characteristics of Caputo fractional differentiable functions, the comparison principle, counterfactual reasoning, and the spectral analysis method (concerning the integral presentations of basic solutions). Some numerical examples are also provided to demonstrate the validity of the proposed results.
	\end{abstract}
{{\bf Key works}: Fractional differential equations, multi-order fractional differential equations with Emden--Fowler-type coefficients, oscillatory solutions, non-oscillatory solution, characteristic polynomials, comparison principle, asymptotic behavior of solutions}\\

{\bf AMS subject classifications:} 26A33, 34A08, 34C10, 34D05, 45A05, 45G05

\section{Introduction}
J.C.F. Sturm initiated oscillation theory in his investigations of Sturm--Louville problems in 1836. Due to its great importance in describing many real applications, many papers dealing with non-autonomous ordinary differential equations have appeared, in which various classifications of equations according to the oscillatory properties of their solutions are proposed. Furthermore, the existence or absence of singular, proper, oscillatory, and monotone solutions of various types are shown and the asymptotic properties of such solutions are considered. We refer in particular to the survey paper by Wong \cite{Wong}, to the monographs by Elias \cite{Elias}, Kiguradze and Chanturiya \cite{KC}, Swanson \cite{Swanson} and the references therein for some representative contributions related to this topic.

Since the Leibnitz rule for derivatives of composite functions is not valid for fractional derivatives, many approaches and methods in ordinary differential equations are often not applicable to fractional differential equations concerning the study of the oscillatory properties of solutions. Hence, only a few studies on this theory have been published, and the development is still in its infancy and requires further investigation. From a personal perspective, we list some typical works below.

Grace \cite{Grace1} reported the first results on oscillatory solutions of fractional differential equations. In particular, in this paper, using a counterfactual argument when dealing with the definition of non-oscillation, the author obtained some simple sufficient conditions ensuring the existence of oscillatory solutions for some basic classes of equations with Riemann--Liouville derivatives of the order less than 1. After that in \cite{Grace1}, he showed the oscillatory behavior of solutions to nonlinear fractional differential equations with the Caputo fractional derivative of the order $\alpha\in (1,2)$. Băleanu et al. \cite{Baleanu} investigated the eventual sign-changing for the solutions of a linear $(1+\alpha)$-order fractional differential equation ($\alpha\in (0,1)$) by presenting a Kamenev-type theorem in the framework of fractional calculus. A survey on one of the mathematical approaches used to solve a fractional differential equation whose solution gives the free dynamic response of viscoelastic single degree of freedom systems (viscosity is modeled by a fractional displacement derivative instead of first-order one) is introduced in \cite{Marano}. The paper dealt with the Caputo fractional derivative and its Laplace transform (on which the resolution method is based). 

Recently in \cite{Dosla}, the authors considered higher-order fractional differential equations with the Riemann--Liouville fractional derivatives and Emden--Fowler-type coefficients. They explored the effect of different orders of derivatives on the oscillatory and asymptotic properties of the studied equations. Moreover, the dissimilarities between integer and non-integer order cases are emphasized.

Inspired by the works of Grace, Bartušek, and Došlá mentioned above, we focus on multi-term fractional differential equations with the Caputo fractional derivatives in the present paper. In light of the characteristics of Caputo fractional differentiable functions, the comparison principle, counterfactual reasoning, and the spectral analysis method (concerning the integral presentations of basic solutions), we systematically treat the asymptotic behavior of many (linear/nonlinear) classes of higher-order fractional differential equations with multiple terms. More precisely, in Section 3, we discuss the existence of (linear/nonlinear) fractional differential equations. The asymptotic behavior of oscillatory solutions is described in Section 4. The asymptotic behavior of non-oscillatory solutions of higher-order fractional differential equations with the Emden--Fowler-type coefficients is given in Section 5. We also provide numerical examples to illustrate the obtained theoretical findings.
\section{Preliminaries}
Let $0 <\alpha \in \mathbb R$ with $\left \lceil \alpha\right \rceil = n $ and $J = [0, T]$ or $J = [0, \infty)$ we defined Riemann-Liouville fractional integral of a function $f: J \rightarrow 
 \mathbb R$ as below:
 $$ I^\alpha_{0^+}f(t) := \frac{1}{\Gamma(\alpha)}\int_{0}^{t}(t-s)^{\alpha -1}f(s)ds,\;t\in J,$$
 its Riemann-Liouville fractional derivative order $\alpha$ of $f$ as 
 $$^{RL} D^\alpha_{0^+} f(t) = D^{(n)} I_{0^+}^{n-\alpha}f(t), \; t \in J \setminus \{ 0 \}, $$
 and its Caputo fractional derivative order $\alpha$
 of function $f$ as below:
 $$ ^C D^\alpha_{0^+}f(t) :=^{RL} D^\alpha_{0^+}(f(t) -T_{n-1}[f;0](t)) , \;t\in J \setminus \{ 0 \},$$
 where $T_{n-1}[f;0](t) = \sum_{k=0}^{n-1}\frac{f^{(k)}(0)}{k!}t^k$ denotes the Taylor polynomial of order
$n - 1$ of $f$ centered at $0$ and $D^{(n)}$ is usually derivative. See 
 \cite[Chapter III]{Kai} and \cite{Vainikko_16} for more details on the Caputo fractional derivative. 
\begin{lemma}\label{nhtp}
    Let $\alpha, \beta > 0$ and $x \in L^1[0,T]$. Then, we have 
    \begin{align*}
        I^\alpha_{0^+}\left(I^\beta_{0^+}x(t)\right) =  I^\beta_{0^+}\left(I^\alpha_{0^+}x(t)\right) = I^{\alpha+\beta}_{0^+}x(t),\;\forall t \in [0,T].
    \end{align*}
\end{lemma}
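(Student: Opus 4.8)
The plan is to establish the second equality $I^\alpha_{0^+}\bigl(I^\beta_{0^+}x(t)\bigr) = I^{\alpha+\beta}_{0^+}x(t)$ by direct computation; the first equality (commutativity in $\alpha$ and $\beta$) then follows for free, since the resulting expression $I^{\alpha+\beta}_{0^+}x(t)$ is manifestly symmetric under interchanging $\alpha$ and $\beta$. First I would unfold the definitions: writing the inner integral explicitly inside the outer one gives
\begin{align*}
I^\alpha_{0^+}\bigl(I^\beta_{0^+}x(t)\bigr) = \frac{1}{\Gamma(\alpha)\Gamma(\beta)}\int_{0}^{t}\int_{0}^{s}(t-s)^{\alpha-1}(s-\tau)^{\beta-1}x(\tau)\,d\tau\,ds,
\end{align*}
an integral over the triangle $\{(s,\tau): 0\le\tau\le s\le t\}$.

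Next I would justify swapping the order of integration. Because $x\in L^1[0,T]$ and the kernel $(t-s)^{\alpha-1}(s-\tau)^{\beta-1}$ is nonnegative, Tonelli's theorem applies: the iterated integral of the absolute value is finite (the inner $s$-integral, computed below, is an integrable power of $t-\tau$), so Fubini's theorem legitimizes interchanging the integrals. This step, together with confirming that the identity holds for almost every $t\in[0,T]$, is the only genuinely technical point; everything else is algebraic. After interchanging, I would arrive at
\begin{align*}
\frac{1}{\Gamma(\alpha)\Gamma(\beta)}\int_{0}^{t}x(\tau)\left(\int_{\tau}^{t}(t-s)^{\alpha-1}(s-\tau)^{\beta-1}\,ds\right)d\tau.
\end{align*}

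I would then evaluate the inner integral via the substitution $s=\tau+u(t-\tau)$ with $u\in[0,1]$, which turns it into $(t-\tau)^{\alpha+\beta-1}\int_{0}^{1}(1-u)^{\alpha-1}u^{\beta-1}\,du = (t-\tau)^{\alpha+\beta-1}B(\beta,\alpha)$, where $B$ is the Euler Beta function. Invoking the classical identity $B(\beta,\alpha)=\Gamma(\alpha)\Gamma(\beta)/\Gamma(\alpha+\beta)$, the prefactors cancel and the expression collapses to
\begin{align*}
\frac{1}{\Gamma(\alpha+\beta)}\int_{0}^{t}(t-\tau)^{\alpha+\beta-1}x(\tau)\,d\tau = I^{\alpha+\beta}_{0^+}x(t),
\end{align*}
as desired. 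Since $\alpha$ and $\beta$ enter the right-hand side symmetrically, running the same computation with the roles of $\alpha$ and $\beta$ exchanged yields $I^\beta_{0^+}\bigl(I^\alpha_{0^+}x(t)\bigr)=I^{\alpha+\beta}_{0^+}x(t)$ as well, completing the chain of equalities.
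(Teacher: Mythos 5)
Your argument is correct and is exactly the standard Fubini--Tonelli plus Beta-function computation that the paper delegates to the cited reference (\cite[Theorem 2.2 and Corollary 2.3]{Kai}); your remark that for $x\in L^1[0,T]$ the identity is only guaranteed almost everywhere is an appropriate refinement of the statement as written. No gaps.
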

\begin{proof}
    See \cite[Theorem 2.2 and Corollary 2.3 page 14]{Kai}
\end{proof}
\begin{lemma}\label{dhc}
    Let $f \in AC[0,T], \alpha \in (0,1)$. Then,
    \begin{align*}
        ^C D^\alpha_{0^+}f(t) = \frac{1}{\Gamma(1-\alpha)}\int_0^t \frac{f'(s)}{(t-s)^\alpha}\mathrm{d}s,\;\forall t \in (0,T).
    \end{align*}
\end{lemma}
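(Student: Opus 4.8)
The plan is to reduce everything to the semigroup property of Lemma~\ref{nhtp} together with the fundamental theorem of calculus for absolutely continuous functions. First I would record the structural simplifications coming from $\alpha \in (0,1)$: here $\lceil \alpha \rceil = n = 1$, so the truncating Taylor polynomial is just the constant $T_{0}[f;0](t) = f(0)$ and $D^{(n)} = \frac{d}{dt}$. Hence, directly from the definitions,
\[
{}^{C} D^{\alpha}_{0^+} f(t) = {}^{RL} D^{\alpha}_{0^+}\bigl(f - f(0)\bigr)(t) = \frac{d}{dt}\, I^{1-\alpha}_{0^+}\bigl(f - f(0)\bigr)(t).
\]

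Next I would exploit that $f \in AC[0,T]$: by the fundamental theorem of calculus $f(s) - f(0) = \int_0^s f'(\tau)\,\mathrm{d}\tau = I^{1}_{0^+} f'(s)$ with $f' \in L^1[0,T]$. Applying Lemma~\ref{nhtp} with exponents $1-\alpha$ and $1$ (legitimate precisely because $f' \in L^1$) gives
\[
I^{1-\alpha}_{0^+}\bigl(f - f(0)\bigr) = I^{1-\alpha}_{0^+} I^{1}_{0^+} f' = I^{1}_{0^+}\bigl(I^{1-\alpha}_{0^+} f'\bigr),
\]
so that $I^{1-\alpha}_{0^+}(f - f(0))(t) = \int_0^t \bigl(I^{1-\alpha}_{0^+} f'\bigr)(\tau)\,\mathrm{d}\tau$. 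Since the Riemann--Liouville integral maps $L^1[0,T]$ into $L^1[0,T]$ (it is convolution against the locally integrable kernel $t^{-\alpha}/\Gamma(1-\alpha)$), the integrand $I^{1-\alpha}_{0^+} f'$ is itself in $L^1$, so the right-hand side is an absolutely continuous function of $t$. Differentiating and invoking the Lebesgue differentiation theorem then yields
\[
{}^{C} D^{\alpha}_{0^+} f(t) = \bigl(I^{1-\alpha}_{0^+} f'\bigr)(t) = \frac{1}{\Gamma(1-\alpha)} \int_0^t \frac{f'(s)}{(t-s)^{\alpha}}\,\mathrm{d}s,
\]
which is the asserted formula.

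As an explicit, self-contained alternative to the semigroup step, I could instead substitute $f(s) - f(0) = \int_0^s f'(\tau)\,\mathrm{d}\tau$ into $I^{1-\alpha}_{0^+}(f-f(0))$ and apply Fubini's theorem to interchange the order of integration; the inner integral $\int_\tau^t (t-s)^{-\alpha}\,\mathrm{d}s = (t-\tau)^{1-\alpha}/(1-\alpha)$ then produces $\frac{1}{\Gamma(2-\alpha)}\int_0^t (t-\tau)^{1-\alpha} f'(\tau)\,\mathrm{d}\tau$, after which the same differentiation argument applies. Justifying this interchange only requires absolute integrability of $(t-s)^{-\alpha}\lvert f'(\tau)\rvert$ on the triangle $0 \le \tau \le s \le t$, which follows from $f' \in L^1$ and $\int_0^t (t-s)^{-\alpha}\,\mathrm{d}s < \infty$. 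I expect the differentiation step to be the only genuine subtlety: because $f'$ is merely $L^1$, the derivative of the Riemann--Liouville integral recovers $I^{1-\alpha}_{0^+} f'$ for almost every $t \in (0,T)$ rather than literally every $t$, so the identity is to be read in the a.e.\ sense (it holds at every $t$ once a little more regularity, e.g.\ continuity of $f'$, is assumed).
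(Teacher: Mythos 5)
Your proof is correct; the paper itself offers no argument here, simply citing Diethelm's book, and what you have written is precisely the standard proof given there (write $f - f(0) = I^{1}_{0^+}f'$ using absolute continuity, commute the integrals via the semigroup property of Lemma~\ref{nhtp}, and differentiate). Your closing caveat is also right and matches how the result is stated in the cited reference: for $f$ merely in $AC[0,T]$ the derivative $f'$ is only defined a.e.\ and the identity holds only for almost every $t$, so the lemma's ``for all $t\in(0,T)$'' is a slight overstatement unless extra regularity of $f'$ (e.g.\ continuity) is assumed.
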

 \begin{proof}
     See \cite[Lemma 2.12, p. 27 and Lemma 3.4, p. 53]{Kai}.
 \end{proof}
 \begin{lemma}\label{tddh}
     If $f$ is a continuous and $\alpha \in (0,1)$, then
     \begin{align*}
         ^C D^\alpha_{0^+} I^\alpha_{0+}f =f.
     \end{align*}
 \end{lemma}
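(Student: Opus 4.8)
The plan is to reduce the Caputo derivative to a Riemann--Liouville derivative, exploit the semigroup property of fractional integrals from Lemma~\ref{nhtp}, and finish with the fundamental theorem of calculus. Write $g := I^\alpha_{0^+}f$. Since $\alpha \in (0,1)$ we have $n = \lceil \alpha \rceil = 1$, so the Taylor polynomial appearing in the definition of the Caputo derivative is simply $T_0[g;0](t) = g(0)$, a constant.

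First I would check that $g(0) = 0$. Because $f$ is continuous on the compact interval it is bounded, say $|f| \le M$, and hence
$$|g(t)| \le \frac{M}{\Gamma(\alpha)}\int_0^t (t-s)^{\alpha-1}\,\mathrm{d}s = \frac{M}{\Gamma(\alpha+1)}\,t^\alpha \xrightarrow{\;t\to 0^+\;} 0,$$
so that $g(0)=0$ and the subtracted Taylor polynomial vanishes. Consequently the Caputo and Riemann--Liouville derivatives of $g$ coincide:
$$^C D^\alpha_{0^+} g = {}^{RL}D^\alpha_{0^+}\bigl(g - T_0[g;0]\bigr) = {}^{RL}D^\alpha_{0^+} g = D^{(1)} I^{1-\alpha}_{0^+} g.$$

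Next I would apply the semigroup law. Continuity of $f$ on $[0,T]$ gives $f \in L^1[0,T]$, and both $\alpha$ and $1-\alpha$ are positive, so Lemma~\ref{nhtp} yields
$$I^{1-\alpha}_{0^+} g = I^{1-\alpha}_{0^+}\bigl(I^{\alpha}_{0^+} f\bigr) = I^{1}_{0^+} f = \int_0^t f(s)\,\mathrm{d}s.$$
Finally, since $f$ is continuous, the fundamental theorem of calculus shows that this antiderivative is continuously differentiable with $D^{(1)}\int_0^t f(s)\,\mathrm{d}s = f(t)$, which establishes $^C D^\alpha_{0^+} I^\alpha_{0^+} f = f$.

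The only genuinely delicate points are matters of bookkeeping rather than depth: one must confirm that the hypotheses of Lemma~\ref{nhtp} are met (here continuity trivially gives integrability) and that $g(0)=0$, so that passing from the Caputo to the Riemann--Liouville derivative is legitimate. Once the semigroup identity collapses the two fractional integrals into the ordinary integral $I^1_{0^+}f$, the conclusion is immediate. The step I expect to require the most care is the implicit differentiation of $I^{1-\alpha}_{0^+}g$ needed for the Riemann--Liouville derivative; this is circumvented entirely by first simplifying to $\int_0^t f$, whose derivative exists classically.
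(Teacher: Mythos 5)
Your argument is correct. The paper does not actually write out a proof of this lemma—it simply cites \cite[Theorem 3.7]{Kai}—and your reduction (observe $g(0)=I^\alpha_{0^+}f(0)=0$ so the Caputo and Riemann--Liouville derivatives of $g$ agree, collapse $I^{1-\alpha}_{0^+}I^{\alpha}_{0^+}f$ to $I^1_{0^+}f$ via Lemma~\ref{nhtp}, then apply the fundamental theorem of calculus) is precisely the standard proof given in that reference.
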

 \begin{proof}
     See \cite[Theorem3.7, p. 53]{Kai}.
 \end{proof}
  \begin{definition}\cite[Definition 2.2]{Cong}
    Let $0 <\alpha \in \mathbb R$ with $\left \lceil \alpha\right \rceil = n $ and $f \in C^{n-1}[0,T]$. If $^C D^\alpha_{0^+} f$ exists and is in the class $L_1[0,T]$, then we say that the function $f$ is Caputo $\alpha$-differentiable on $[0,T]$. If $^C D^\alpha_{0^+} f$ exists and is in the class $C[0,T]$, then we say that the function $f$ is continuously Caputo $\alpha$-differentiable on $[0,T]$.
 \end{definition}
 \begin{lemma}\label{tdtp}
     Let $\alpha > 0$. Suppose that $x$ is continuously Caputo $\alpha$-differentiable on $[0,\infty)$, then 
     \begin{align*}
         I^\alpha_{0^+}\left( ^C D^\alpha_{0^+}x(t) \right) = x(t) - \sum_{k=0}^{\left \lfloor \alpha \right \rfloor-1} \frac{x^{(k)}(0)}{k!}t^k,\;\forall t \in [0,\infty).
     \end{align*}
 \end{lemma}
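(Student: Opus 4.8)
The plan is to argue straight from the definition \({}^C D^\alpha_{0^+}x = {}^{RL}D^\alpha_{0^+}\bigl(x-T_{n-1}[x;0]\bigr) = D^{(n)}I^{n-\alpha}_{0^+}\bigl(x-T_{n-1}[x;0]\bigr)\), where \(n=\lceil\alpha\rceil\). First I set \(g:=x-T_{n-1}[x;0]\); since \(x\in C^{n-1}\), we have \(g\in C^{n-1}[0,\infty)\) with \(g^{(k)}(0)=0\) for \(k=0,\dots,n-1\). The hypothesis that \(x\) is continuously Caputo \(\alpha\)-differentiable says precisely that \(y:=\,{}^C D^\alpha_{0^+}x=D^{(n)}I^{n-\alpha}_{0^+}g\) is continuous, so the target reduces to proving \(I^\alpha_{0^+}y=g\), that is, \(I^\alpha_{0^+}\bigl({}^C D^\alpha_{0^+}x\bigr)=x-\sum_{k=0}^{n-1}\frac{x^{(k)}(0)}{k!}t^k\) (the subtracted block being exactly the Taylor polynomial \(T_{n-1}[x;0]\), whose top index is \(\lceil\alpha\rceil-1=n-1\)). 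Lemmas \ref{dhc} and \ref{tddh} furnish the \(\alpha\in(0,1)\) prototype; the work here is the general-order version. If \(\alpha=n\in\N\) then \(n-\alpha=0\) and the claim is just the fundamental theorem of calculus, so I assume \(\alpha\notin\N\), whence \(n-\alpha\in(0,1)\).

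Because \(D^{(n)}I^{n-\alpha}_{0^+}g=y\) with \(y\) continuous, the function \(I^{n-\alpha}_{0^+}g\) is of class \(C^n\) and has the same \(n\)-th derivative as the genuine iterated antiderivative \(I^n_{0^+}y\); hence \(I^{n-\alpha}_{0^+}g=I^n_{0^+}y+p\) for some polynomial \(p\) of degree at most \(n-1\). The crux is to show \(p\equiv 0\) by comparing orders of vanishing at \(t=0\). On one side, \(I^n_{0^+}y\) vanishes together with its first \(n-1\) derivatives at \(0\); on the other, since \(g(s)=o(s^{n-1})\) as \(s\to0^+\), an elementary Beta-integral estimate gives \(I^{n-\alpha}_{0^+}g(t)=o\bigl(t^{\,2n-\alpha-1}\bigr)\) with exponent \(2n-\alpha-1>n-1\), so \(I^{n-\alpha}_{0^+}g\) also vanishes to order exceeding \(n-1\) at \(0\). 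A polynomial of degree \(\le n-1\) all of whose derivatives up to order \(n-1\) vanish at \(0\) is identically zero, so \(p\equiv0\) and \(I^{n-\alpha}_{0^+}g=I^n_{0^+}y\).

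To conclude, I invoke the semigroup law (Lemma \ref{nhtp}) to write \(I^n_{0^+}y=I^{n-\alpha}_{0^+}\bigl(I^\alpha_{0^+}y\bigr)\), so that \(I^{n-\alpha}_{0^+}\bigl(g-I^\alpha_{0^+}y\bigr)=0\). Since \(I^{n-\alpha}_{0^+}\) is injective on \(C[0,\infty)\) — a homogeneous Abel integral equation has only the trivial continuous solution — this forces \(g=I^\alpha_{0^+}y\), i.e. \(I^\alpha_{0^+}\bigl({}^C D^\alpha_{0^+}x\bigr)=x-T_{n-1}[x;0]\), as required. The main obstacle is the bookkeeping in the second paragraph: one has at one's disposal only \(x\in C^{n-1}\) and continuity of \({}^C D^\alpha_{0^+}x\), so the vanishing of \(p\) must be extracted solely from these regularity facts through the order-of-vanishing estimates, rather than from any stronger smoothness of \(x\); keeping those estimates sharp enough that the exponent stays above \(n-1\) is the delicate point.
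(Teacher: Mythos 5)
Your argument is correct, but it is genuinely different from what the paper does: the paper offers no proof at all for this lemma, simply citing \cite[Proposition 5.1]{Vainikko_16}, whereas you reconstruct the result from the definitions and the two elementary facts the paper does record (the semigroup law of Lemma \ref{nhtp} and the injectivity of Abel integral operators on continuous functions). Your route --- write $I^{n-\alpha}_{0^+}g = I^n_{0^+}y + p$ with $p$ a polynomial of degree $\le n-1$, kill $p$ by the order-of-vanishing comparison $I^{n-\alpha}_{0^+}g(t)=o(t^{2n-\alpha-1})$ versus $I^n_{0^+}y(t)=O(t^n)$, then cancel $I^{n-\alpha}_{0^+}$ --- is sound: the Peano remainder estimate $g(s)=o(s^{n-1})$ really does follow from $x\in C^{n-1}$ alone, the Beta-integral bound gives the exponent $2n-\alpha-1>n-1$ because $n=\lceil\alpha\rceil>\alpha$ for non-integer $\alpha$, and injectivity of $I^{n-\alpha}_{0^+}$ on $C[0,\infty)$ follows by applying $I^{\alpha-n+1}_{0^+}$ and differentiating. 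What your approach buys is a self-contained proof under exactly the regularity the paper assumes ($x\in C^{n-1}$ plus continuity of ${}^C D^\alpha_{0^+}x$), rather than an appeal to an external source whose hypotheses the reader must check. One point you should make explicit rather than pass over silently: you prove the identity with the Taylor polynomial $T_{n-1}[x;0]$, i.e.\ with the sum running to $\lceil\alpha\rceil-1$, whereas the lemma as printed stops at $\lfloor\alpha\rfloor-1$; for non-integer $\alpha$ these differ by the term $\frac{x^{(n-1)}(0)}{(n-1)!}t^{n-1}$. Your version is the correct one --- it is the one the paper itself uses later (e.g.\ in the proof of Theorem \ref{dl1}) --- so the printed index is a typo, but a referee would want that discrepancy named, not absorbed.
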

 \begin{proof}
     See \cite[Proposition 5.1]{Vainikko_16}.
 \end{proof}
 \begin{lemma}\label{bdv}
     Let $n-1 <\alpha < n, n\in\mathbb N $ and assume that $x :[0,\infty) \to \mathbb R$ is continuously Caputo $\alpha$-differentiable on $[0,\infty)$. Then, for any $t \in (0, \infty)$, we have 
     \begin{align*}
         ^C D^\alpha_{0^+}x(t) = \frac{1}{\Gamma( n-\alpha)}\left(\frac{x^{(n-1)}(t) - x^{(n-1)}(0)}{t^{\alpha-n+1}}\right) + (\alpha-n+1)\int_0^t \frac{x^{(n-1)}(t) - x^{(n-1)}(s)}{(t-s)^{\alpha-n+2}}ds.
     \end{align*}
 \end{lemma}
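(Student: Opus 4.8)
The plan is to lower the order of differentiation. Write $\beta := \alpha - (n-1) \in (0,1)$, so that $\alpha = (n-1) + \beta$ and $n - \alpha = 1 - \beta$. The first goal is to prove the reduction identity $^C D^\alpha_{0^+} x(t) = {}^C D^\beta_{0^+}\big(x^{(n-1)}\big)(t)$ for all $t \in (0,\infty)$; once this is in hand, the desired formula follows by applying Lemma~\ref{dhc} to $g := x^{(n-1)}$ (an order-$\beta$ Caputo derivative with $\beta \in (0,1)$) and then integrating by parts.

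To establish the reduction, I would start from Lemma~\ref{tdtp}, which gives $I^\alpha_{0^+}\big({}^C D^\alpha_{0^+} x\big)(t) = x(t) - \sum_{k=0}^{n-1} \frac{x^{(k)}(0)}{k!} t^k$, and differentiate both sides $n-1$ times. On the right-hand side only the $k = n-1$ term survives, leaving $x^{(n-1)}(t) - x^{(n-1)}(0)$. On the left-hand side I would use Lemma~\ref{nhtp} to factor $I^\alpha_{0^+} = I^{n-1}_{0^+} I^\beta_{0^+}$ and then the elementary identity $D^{(n-1)} I^{n-1}_{0^+} \phi = \phi$ valid for continuous $\phi$ (here $\phi = I^\beta_{0^+}\,{}^C D^\alpha_{0^+} x$, which is continuous because $^C D^\alpha_{0^+} x$ is continuous by hypothesis). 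This yields the key intermediate relation $I^\beta_{0^+}\big({}^C D^\alpha_{0^+} x\big)(t) = x^{(n-1)}(t) - x^{(n-1)}(0)$. Applying $^C D^\beta_{0^+}$ to both sides, invoking Lemma~\ref{tddh} (so that $^C D^\beta_{0^+} I^\beta_{0^+} f = f$ for the continuous function $f = {}^C D^\alpha_{0^+} x$) on the left and the fact that the Caputo derivative annihilates the additive constant $x^{(n-1)}(0)$ on the right, I obtain exactly $^C D^\alpha_{0^+} x = {}^C D^\beta_{0^+} x^{(n-1)}$.

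With the reduction secured, I would apply Lemma~\ref{dhc} to $g := x^{(n-1)}$ to write $^C D^\beta_{0^+} g(t) = \frac{1}{\Gamma(1-\beta)} \int_0^t (t-s)^{-\beta} g'(s)\,\mathrm{d}s$, and then integrate by parts taking $u = (t-s)^{-\beta}$ and $v = g(s) - g(t)$, so that $\mathrm{d}u = \beta (t-s)^{-\beta-1}\,\mathrm{d}s$. The boundary contribution at $s = 0$ produces $\frac{g(t)-g(0)}{t^\beta}$, the boundary contribution at $s = t$ vanishes, and the remaining term is $\beta \int_0^t \frac{g(t)-g(s)}{(t-s)^{\beta+1}}\,\mathrm{d}s$. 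Substituting back $1 - \beta = n - \alpha$, $\beta = \alpha - n + 1$, and $g = x^{(n-1)}$ reproduces the claimed representation.

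I expect the main obstacle to be the regularity bookkeeping attached to this last step rather than the algebra. Concretely, one must (i) guarantee $g = x^{(n-1)} \in AC[0,T]$ so that Lemma~\ref{dhc} genuinely applies — this is where the representation $x^{(n-1)} = x^{(n-1)}(0) + I^\beta_{0^+}\big({}^C D^\alpha_{0^+} x\big)$ derived above is useful — and (ii) justify, for every fixed $t$, that the boundary term $(t-s)^{-\beta}\big(g(s)-g(t)\big) \to 0$ as $s \to t^-$ and that the singular integral $\int_0^t \frac{g(t)-g(s)}{(t-s)^{\beta+1}}\,\mathrm{d}s$ converges. Writing $g(t) - g(s) = \int_s^t g'(r)\,\mathrm{d}r$ and applying Tonelli's theorem bounds this integral by $\frac{1}{\beta}\int_0^t |g'(r)|\,(t-r)^{-\beta}\,\mathrm{d}r = \frac{\Gamma(1-\beta)}{\beta}\, I^{1-\beta}_{0^+}|g'|(t)$, which is finite because $^C D^\alpha_{0^+} x = I^{1-\beta}_{0^+} g'$ is continuous by hypothesis; the same estimate controls the vanishing of the boundary term. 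Carefully turning these integrability facts into the pointwise identity for all $t \in (0,\infty)$ is the delicate part of the argument.
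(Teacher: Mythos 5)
First, note that the paper does not actually prove this lemma; it simply cites \cite[Theorem 5.2]{Vainikko_16}, so there is no internal argument to compare yours against. Your first step --- the reduction ${}^C D^{\alpha}_{0^+}x = {}^C D^{\beta}_{0^+}\bigl(x^{(n-1)}\bigr)$ with $\beta=\alpha-n+1$, obtained by differentiating the Taylor identity of Lemma \ref{tdtp} $n-1$ times and then applying Lemma \ref{tddh} --- is correct and cleanly justified, and the intermediate relation $x^{(n-1)}(t)-x^{(n-1)}(0)=I^{\beta}_{0^+}\bigl({}^C D^{\alpha}_{0^+}x\bigr)(t)$ is exactly the right object to isolate. (As a side remark, your computation puts the factor $\Gamma(n-\alpha)^{-1}$ in front of \emph{both} terms, which is the correct form of Vainikko's identity; the lemma as typeset in the paper appears to have dropped the factor from the integral term.)

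The genuine gap is in step (i) of your ``regularity bookkeeping'': the representation $x^{(n-1)}=x^{(n-1)}(0)+I^{\beta}_{0^+}f$ with $f:={}^C D^{\alpha}_{0^+}x$ continuous does \emph{not} imply $x^{(n-1)}\in AC[0,T]$. The fractional integral $I^{\beta}_{0^+}f$ of a merely continuous $f$ is H\"older continuous of exponent $\beta$ but need not be differentiable anywhere: for instance, a Weierstrass-type function $w\in H^{\gamma}$ with $w(0)=0$ and $\beta<\gamma<1$ satisfies $w=I^{\beta}_{0^+}\bigl({}^{RL}D^{\beta}_{0^+}w\bigr)$ with ${}^{RL}D^{\beta}_{0^+}w$ continuous, yet $w$ is nowhere differentiable and in particular not absolutely continuous. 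Such a $w$ is continuously Caputo $\beta$-differentiable in the sense of the paper's Definition, so the hypotheses of the lemma genuinely allow $g=x^{(n-1)}\notin AC$. Consequently Lemma \ref{dhc} is not applicable, the identity $g(t)-g(s)=\int_s^t g'(r)\,dr$ is unavailable, and the entire integration-by-parts step collapses; your subsequent bound ``$I^{1-\beta}_{0^+}|g'|$ is finite because ${}^C D^{\alpha}_{0^+}x=I^{1-\beta}_{0^+}g'$'' presupposes the existence of $g'$, which is the very point at issue. The repair is to never differentiate $g$: substitute $g(t)-g(s)=\frac{1}{\Gamma(\beta)}\bigl(\int_0^t(t-r)^{\beta-1}f(r)\,dr-\int_0^s(s-r)^{\beta-1}f(r)\,dr\bigr)$ directly into the right-hand side of the claimed formula and verify by Fubini's theorem and the Beta-function identity $\int_s^t(t-u)^{-\beta-1}(u-r)^{\beta-1}\,du$-type computations that the expression collapses to $f(t)$. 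This is in substance Vainikko's proof, and it uses only the continuity of $f$, which is exactly what the hypothesis supplies. Your argument as written is valid under the stronger assumption $x^{(n-1)}\in AC[0,T]$, but that is not what the lemma asserts.
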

 \begin{proof}
     See \cite[Theorem 5.2]{Vainikko_16}.
 \end{proof}
\begin{lemma}\label{bdt}\cite[Lemma 2.1]{Grace1}
 For $X \geq 0$ and $Y > 0$, we have 
 \begin{itemize}
     \item [(i)] $\lambda X Y^{\lambda - 1} - X^\lambda \leq (\lambda -1)Y^\lambda, \lambda > 1.$
     \item [(ii)]$\lambda X Y^{\lambda - 1} - X^\lambda \geq (  \lambda-1) Y^\lambda, 0 < \lambda <1.$ 
 \end{itemize}
 \end{lemma}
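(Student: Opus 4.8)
The plan is to reduce both inequalities to a single-variable statement and then invoke the tangent-line characterization of convex and concave functions. Since $Y > 0$, I would first divide each inequality through by $Y^\lambda > 0$ and introduce the variable $t := X/Y \geq 0$. Both claims then collapse to comparing the function $g(t) := \lambda t - t^\lambda$ with the constant $\lambda - 1$: part (i) asserts $g(t) \leq \lambda - 1$ for $\lambda > 1$, while part (ii) asserts $g(t) \geq \lambda - 1$ for $0 < \lambda < 1$.

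Next I would recognize that this is precisely the tangent-line inequality for the power function $f(t) = t^\lambda$ at the base point $t = 1$. The tangent line there is $L(t) = 1 + \lambda(t-1) = \lambda t - (\lambda - 1)$. For $\lambda > 1$ the map $t \mapsto t^\lambda$ is convex on $[0,\infty)$, so its graph lies above every tangent line; in particular $t^\lambda \geq \lambda t - (\lambda-1)$, which rearranges to $g(t) \leq \lambda - 1$ and gives (i). For $0 < \lambda < 1$ the same map is concave, so its graph lies below the tangent line, $t^\lambda \leq \lambda t - (\lambda - 1)$, which rearranges to $g(t) \geq \lambda - 1$ and yields (ii).

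If one prefers to avoid quoting convexity directly, the same conclusion follows from a one-line derivative test: $g'(t) = \lambda(1 - t^{\lambda-1})$ vanishes only at $t = 1$, where $g(1) = \lambda - 1$. For $\lambda > 1$ the factor $1 - t^{\lambda-1}$ is positive on $[0,1)$ and negative on $(1,\infty)$, so $t = 1$ is a global maximum, yielding (i); for $0 < \lambda < 1$ the sign pattern is reversed, making $t = 1$ a global minimum and yielding (ii). There is essentially no serious obstacle here; the only points requiring a little care are that dividing by $Y^\lambda$ is legitimate precisely because $Y > 0$, and that the direction of the inequality flips between the two cases exactly because $\lambda - 1$ changes sign in step with the convexity/concavity of $t^\lambda$. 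The degenerate case $X = 0$ (that is, $t = 0$) is covered automatically, since $g(0) = 0$ lies on the correct side of $\lambda - 1$ in each regime.
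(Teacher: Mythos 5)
Your argument is correct and complete: after normalizing by $Y^\lambda>0$ and setting $t=X/Y$, both parts reduce to locating the global extremum of $g(t)=\lambda t-t^\lambda$ at $t=1$, and your sign analysis of $g'(t)=\lambda(1-t^{\lambda-1})$ (equivalently, the tangent-line inequality for the convex/concave map $t\mapsto t^\lambda$) handles both regimes, including the boundary case $t=0$. The paper itself gives no proof of this lemma, citing it directly from \cite[Lemma 2.1]{Grace1}; your derivation is the standard one underlying that reference, so there is nothing to reconcile.
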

 \begin{definition}
Let $x:[0,\infty)\rightarrow \mathbb R$. It is said to be eventually (non) negative, positive if there is a $T>0$ such that $x(t)<0$ ($>0$), $x(t)>0$ for all $t\geq T$, respectively.
 \end{definition}
 Consider the multi-term fractional differential equation
\begin{align}\label{2.6}
    \sum_{i=1}^{k-1}a_i\, ^C D^{\alpha_i}_{0^+}x(t) +  ^C D^{\alpha_k}_{0^+}x(t) =f(t,x),\; t > 0
\end{align}
with the initial conditions
\begin{align}\label{2.7}
    x(0) = x_0, x^{(j)}(0) = x_j, j = 1,2,\ldots,k-1,
\end{align}
where $0 < \alpha_1 \leq 1 <  \alpha_{2}\leq 2 < \ldots <\alpha_k \leq k $, $a_i, i=1,2,\ldots,k$ are constants, and $f : [0,\infty)\times \mathbb R \to \mathbb R$ is continuous.
\begin{definition}
    A function $x: [0,\infty) \to \mathbb R$ is called a solution of the system \eqref{2.6}--\eqref{2.7} if it is continuously Caputo $\alpha$-differentiable on $[0,\infty)$ and satisfies \eqref{2.6} on $(0,\infty)$ and the initial conditions \eqref{2.7}.
\end{definition}
\begin{definition}
	A solution $x$ of the system \eqref{2.6}--\eqref{2.7} is oscillatory if it exists globally on $[0,\infty)$ and there is a sequence $\{t_n\}_{n=1}^\infty\subset [0,\infty)$ with $t_n\to\infty$ as $n\to\infty$ such that $x(t_n)=0$, $n\in \mathbb N$. Otherwise, it is called non-oscillatory.
	\end{definition}
\begin{theorem}\label{dlttdn}
     Consider the system \eqref{2.6}--\eqref{2.7}. Suppose that $f(\cdot,\cdot)$ is Lipchitz continuous to the second variable, that is, there is a continuous function $L : [0,\infty) \to [0, \infty)$ such that
    \begin{align*}
        |f(t,x) -f(t,\hat x)| \leq L(t)|x-\hat x|, \quad\forall t \in [0,\infty), x, \hat x\in \mathbb R.
    \end{align*}
 Then, for any $x_j\in \mathbb R$, $j=0,1,\dots,k-1$, the system \eqref{2.6}--\eqref{2.7} has a unique solution on $[0,\infty).$
\end{theorem}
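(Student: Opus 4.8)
The plan is to convert the initial value problem \eqref{2.6}--\eqref{2.7} into a single Volterra integral equation for which the Banach fixed point theorem applies on an arbitrary compact interval $[0,T]$, and then to glue the local solutions into a global one. To keep the regularity requirement of the solution concept transparent, I would take as the unknown the top Caputo derivative $y:={}^C D^{\alpha_k}_{0^+}x\in C([0,T])$ and reconstruct $x$ by $x(t)=P(t)+I^{\alpha_k}_{0^+}y(t)$, where $P(t)=\sum_{j=0}^{k-1}\frac{x_j}{j!}t^j$ is the fixed polynomial carrying the prescribed data. By Lemma~\ref{tdtp} this $x$ automatically lies in $C^{k-1}$, satisfies the initial conditions \eqref{2.7}, and has ${}^C D^{\alpha_k}_{0^+}x=y$ continuous, so it is continuously Caputo $\alpha_k$-differentiable for free. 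For each lower order I would use the semigroup law (Lemma~\ref{nhtp}) and the inversion ${}^C D^{\alpha_i}_{0^+}I^{\alpha_i}_{0^+}=\mathrm{id}$ (Lemma~\ref{tddh}, in its general-order form) to obtain ${}^C D^{\alpha_i}_{0^+}x=q_i+I^{\alpha_k-\alpha_i}_{0^+}y$, where $q_i:={}^C D^{\alpha_i}_{0^+}P$ is explicit. Substituting into \eqref{2.6} turns the problem into the fixed-point equation $y=\Psi y$ with
\[
\Psi y(t):=f\big(t,P(t)+I^{\alpha_k}_{0^+}y(t)\big)-\sum_{i=1}^{k-1}a_i\,q_i(t)-\sum_{i=1}^{k-1}a_i\,I^{\alpha_k-\alpha_i}_{0^+}y(t),
\]
and any continuous fixed point yields, via $x=P+I^{\alpha_k}_{0^+}y$, a genuine solution in the sense of the definition.

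For the contraction I would equip $C([0,T])$ with the Bielecki weighted norm $\|y\|_\gamma:=\sup_{t\in[0,T]}e^{-\gamma t}|y(t)|$ for a parameter $\gamma>0$ to be chosen. Using the global Lipschitz hypothesis on $f$ together with $|f(t,P+I^{\alpha_k}_{0^+}y)-f(t,P+I^{\alpha_k}_{0^+}\hat y)|\le L(t)\,I^{\alpha_k}_{0^+}|y-\hat y|(t)$, the polynomial terms $q_i$ cancel and I would get
\[
|\Psi y(t)-\Psi\hat y(t)|\le L(t)\,I^{\alpha_k}_{0^+}|y-\hat y|(t)+\sum_{i=1}^{k-1}|a_i|\,I^{\alpha_k-\alpha_i}_{0^+}|y-\hat y|(t).
\]
The analytic heart of the argument is the elementary weighted estimate $e^{-\gamma t}I^{\beta}_{0^+}(e^{\gamma\cdot})(t)=\frac{1}{\Gamma(\beta)}\int_0^t u^{\beta-1}e^{-\gamma u}\,\mathrm{d}u\le\gamma^{-\beta}$, valid for all $\beta,\gamma>0$. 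Writing $L_T:=\max_{[0,T]}L$, it gives
\[
\|\Psi y-\Psi\hat y\|_\gamma\le\Big(L_T\,\gamma^{-\alpha_k}+\sum_{i=1}^{k-1}|a_i|\,\gamma^{-(\alpha_k-\alpha_i)}\Big)\|y-\hat y\|_\gamma.
\]
Since every exponent $\alpha_k-\alpha_i>0$ and $\alpha_k>0$, the bracketed factor tends to $0$ as $\gamma\to\infty$; fixing $\gamma$ large enough makes $\Psi$ a contraction on all of $C([0,T])$, whence the Banach fixed point theorem furnishes a unique $y$, and hence a unique solution $x$, on $[0,T]$.

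The contraction constant depends on $T$ only through $L_T$, so the construction succeeds on every interval $[0,T]$; by uniqueness the solutions associated with $T<T'$ coincide on $[0,T]$, and letting $T\to\infty$ patches them into a single solution on $[0,\infty)$, which is unique by the same local uniqueness. I expect the main obstacle to be neither the contraction (which is routine once the weighted estimate is in hand) nor the gluing, but rather the algebraic reduction in the first step: one must justify the composition identities ${}^C D^{\alpha_i}_{0^+}I^{\alpha_k}_{0^+}y=I^{\alpha_k-\alpha_i}_{0^+}y$ and the explicit form of $q_i={}^C D^{\alpha_i}_{0^+}P$ for the full range of orders $\alpha_i\in(0,k]$, since the excerpt supplies these inversion rules only for $\alpha\in(0,1)$; this requires invoking their general-order versions and keeping careful track of which initial data survive the Caputo derivative at each order.
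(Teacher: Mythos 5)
Your proposal is correct, and it is worth noting that the paper does not actually prove this theorem: its ``proof'' is a one-line deferral to \cite[Theorem 4.5]{Cong}, so your argument is a self-contained reconstruction of what that citation hides. The route you take --- solving for the top-order derivative $y={}^C D^{\alpha_k}_{0^+}x$, reducing to a Volterra fixed-point equation, and running a Banach contraction in a Bielecki norm $\|y\|_\gamma=\sup_t e^{-\gamma t}|y(t)|$ with $\gamma$ large --- is the standard mechanism behind such global well-posedness results for multi-term Caputo equations, and all the quantitative steps check out: the kernel estimate $e^{-\gamma t}I^\beta_{0^+}(e^{\gamma\cdot})(t)\le\gamma^{-\beta}$ is valid, every exponent $\alpha_k$ and $\alpha_k-\alpha_i$ is strictly positive because the orders are strictly increasing with $k-1<\alpha_k\le k$, and the contraction constant is controlled by $\max_{[0,T]}L$ alone, so the construction works on every $[0,T]$ and glues by uniqueness. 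You correctly identify the one point requiring care, namely the general-order composition rules: the paper states Lemma \ref{tddh} only for $\alpha\in(0,1)$, but the identity ${}^C D^{\alpha_i}_{0^+}I^{\alpha_k}_{0^+}y=I^{\alpha_k-\alpha_i}_{0^+}y$ does hold for continuous $y$ because all Taylor coefficients $I^{\alpha_k-j}_{0^+}y(0)$, $j\le\lceil\alpha_i\rceil-1\le k-1<\alpha_k$, vanish, reducing the Caputo derivative to the Riemann--Liouville one and then to Lemma \ref{nhtp}; these general versions are exactly what \cite{Cong} supplies. The only cosmetic addition I would make is to state explicitly that the correspondence $x\leftrightarrow y$ is a bijection between solutions of \eqref{2.6}--\eqref{2.7} in the sense of the paper's definition and continuous fixed points of $\Psi$ (the forward direction uses Lemma \ref{tdtp} applied to a given solution), since uniqueness of the fixed point only transfers to uniqueness of the solution through that equivalence.
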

\begin{proof}
    Using the same arguments as in the proof of \cite[Theorem 4.5]{Cong}.
\end{proof}
 \section{On the existence of the oscillatory solutions of fractional differential equations}
\subsection{The oscillation of multi-order fractional differential equations}
 We first focus on the following fractional differential equation
 \begin{align}\label{eq4}
^C D^\alpha_{0^+}x(t) + a\,\,{}^C D^\beta_{0^+}x(t) = f(t,x(t)) + g(t),\;\;t>0
\end{align}
 with the initial conditions
 \begin{align}\label{dkd1}
 x(0) = x_0, x^{(1)}(0) = x_1, \ldots, x^{(n-1)}(0) = x_{n-1},
 \end{align}
 where $0 < \beta < \alpha \leq  \lceil  \alpha \rceil = n\in\mathbb N$, $x_0, x_1, \ldots, x_{n-1} \in \mathbb R, $ $a$ is a non-negative real number, $f : [0, \infty) \times \mathbb R \rightarrow \mathbb R$ and $g : [0, \infty) \rightarrow \mathbb R$ are continuous functions satisfying the assumptions below.
 \begin{itemize}
     \item [(A)] There exists $T > 0$ such that
     $x f(t,x) \leq 0 $ for all $t \geq T$ and $x \in \mathbb R$.
     \item [(B)] \begin{align*}
&\limsup_{t \to +\infty}t^{\beta-\alpha-n+1}\int_0^t (t-s)^{\alpha-1}g(s)ds = +\infty,\\
&\liminf_{t \to +\infty}t^{\beta-\alpha-n+1}\int_0^t (t-s)^{\alpha-1}g(s)ds = -\infty.
\end{align*} 
 \end{itemize}
Let $m:= \lceil  \beta \rceil$. If $\beta \notin \mathbb Z$, we set $\alpha_1 = \beta -[\beta]+1$, $\alpha_i = \alpha_1 + i-1, i=2, 3, \ldots, m+1$, $\alpha_{i} =i-1, i = m+2, \ldots, n-1$, $\alpha_n = \alpha$. If $\beta \in \mathbb Z$, we set $\alpha_i = i, i=1,2, \ldots, n-1$, $\alpha_n = \alpha$. Then the equation \eqref{eq4} is rewritten as 
\begin{equation}\label{ptvl}
    ^C D^{\alpha_n}_{0^+}x(t) = F(t,x(t), ^C D^{\alpha_1}_{0^+}x(t), ^C D^{\alpha_2}_{0^+}x(t), \ldots, ^C D^{\alpha_{n-1}}_{0^+}x(t)),\;t>0,
\end{equation}
here 
\begin{align*}
    F(t,x(t), ^C D^{\alpha_1}_{0^+}x(t), \ldots, ^C D^{\alpha_{n-1}}_{0^+}x(t)) = \begin{cases}
 f(t,x(t)) +g(t) - a\,^CD^{\alpha_{m+1}}_{0^+}x(t)  & \text{ if } \beta \notin \mathbb Z, \\ 
 f(t,x(t)) +g(t) - a\,^CD^{\alpha_{m}}_{0^+}x(t)    & \text{ if } \beta \in \mathbb Z. 
\end{cases}
\end{align*}
By \cite[Theorem 17]{Cong}, if $f$ is globally Lipschitz continuous concerning the second variable, the system \eqref{ptvl}--\eqref{dkd1} has a unique global solution on $[0,\infty).$
\begin{theorem}\label{dl1}
     Assume that the conditions \textup{(A)} and \textup{(B)} hold. Then an arbitrary solution of \eqref{eq4} (if it exists globally) is oscillatory.
\end{theorem}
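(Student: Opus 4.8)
The plan is to argue by contradiction, following the counterfactual strategy announced in the introduction. Suppose a global solution $x$ is non-oscillatory; then its zero set is bounded, so by continuity $x$ is eventually of one sign. I treat first the case where $x$ is eventually positive, say $x(t)>0$ for all $t\geq T_1$ with $T_1\geq T$ ($T$ as in (A)); the eventually-negative case is symmetric. The goal in this case is to produce a \emph{finite} lower bound for $t^{\beta-\alpha-n+1}\int_0^t(t-s)^{\alpha-1}g(s)\,\du s=\Gamma(\alpha)\,t^{\beta-\alpha-n+1}I^\alpha_{0^+}g(t)$ as $t\to\infty$, which contradicts the requirement $\liminf=-\infty$ in (B). (In the eventually-negative case one instead produces a finite upper bound, contradicting $\limsup=+\infty$.)

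To obtain a usable representation of $I^\alpha_{0^+}g$, I would apply the operator $I^\alpha_{0^+}$ to both sides of \eqref{eq4}. By Lemma~\ref{tdtp} applied with order $\alpha$, one has $I^\alpha_{0^+}\,{}^C D^{\alpha}_{0^+}x(t)=x(t)-P(t)$, where $P$ is a polynomial of degree at most $n-1$ with coefficients fixed by the initial data \eqref{dkd1}. For the lower-order term I would invoke the semigroup law of Lemma~\ref{nhtp} together with Lemma~\ref{tdtp} at order $\beta$ to write $I^\alpha_{0^+}\,{}^C D^{\beta}_{0^+}x=I^{\alpha-\beta}_{0^+}\bigl(I^{\beta}_{0^+}\,{}^C D^{\beta}_{0^+}x\bigr)=I^{\alpha-\beta}_{0^+}(x-Q)$, with $Q$ a polynomial of degree at most $\lceil\beta\rceil-1$. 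Rearranging the resulting identity gives
$$I^\alpha_{0^+}g(t)=\bigl(x(t)-P(t)\bigr)+a\,I^{\alpha-\beta}_{0^+}x(t)-a\,I^{\alpha-\beta}_{0^+}Q(t)-I^\alpha_{0^+}\bigl(f(\cdot,x(\cdot))\bigr)(t).$$

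Next I would estimate the four terms after multiplying by the weight $t^{\beta-\alpha-n+1}$; writing $\gamma:=\alpha-\beta>0$ this weight is $t^{-\gamma-(n-1)}$, which is tuned precisely to kill the non-sign-definite pieces while preserving the sign-definite ones. Indeed $t^{\beta-\alpha-n+1}P(t)\to0$ since $\deg P\leq n-1$ yields at worst $t^{-\gamma}$; the term $t^{\beta-\alpha-n+1}I^{\alpha-\beta}_{0^+}Q(t)$ tends to a finite constant because $I^{\alpha-\beta}_{0^+}t^{j}\sim t^{j+\gamma}$ with $j\leq\lceil\beta\rceil-1\leq n-1$, giving $t^{j-n+1}$; and the initial-segment contributions $\int_0^{T_1}(t-s)^{\alpha-1}f(s,x(s))\,\du s=O(t^{\alpha-1})$ and $\int_0^{T_1}(t-s)^{\gamma-1}x(s)\,\du s=O(t^{\gamma-1})$ vanish after weighting because $\beta<n$. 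The sign-definite main parts survive favorably: $t^{\beta-\alpha-n+1}x(t)\geq0$ since $x(t)>0$; the tail $a\,t^{\beta-\alpha-n+1}\int_{T_1}^t(t-s)^{\gamma-1}x(s)\,\du s\geq0$ since $a\geq0$ and $x>0$ on $[T_1,t]$; and $-t^{\beta-\alpha-n+1}\int_{T_1}^t(t-s)^{\alpha-1}f(s,x(s))\,\du s\geq0$ since assumption (A) forces $f(s,x(s))\leq0$ whenever $x(s)>0$. Summing, $\liminf_{t\to\infty}t^{\beta-\alpha-n+1}I^\alpha_{0^+}g(t)>-\infty$, contradicting (B).

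The main obstacle, as I see it, is the careful bookkeeping attached to $I^\alpha_{0^+}\,{}^C D^{\beta}_{0^+}x$: one must first justify the rewriting $I^\alpha_{0^+}\,{}^C D^{\beta}_{0^+}x=I^{\alpha-\beta}_{0^+}(x-Q)$ (here the continuous Caputo $\beta$-differentiability of $x$, inherited from $\beta<\alpha$, is what lets Lemmas~\ref{nhtp} and~\ref{tdtp} apply), and then verify that every piece produced by the split is either sign-definite in the favorable direction or annihilated by the weight $t^{\beta-\alpha-n+1}$. Once this accounting is complete the contradiction with (B) is immediate, and the eventually-negative case follows by reversing all inequalities, using that (A) gives $f(s,x(s))\geq0$ when $x(s)<0$ and that hypothesis (B) is symmetric with respect to $\pm\infty$.
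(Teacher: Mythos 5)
Your proposal is correct and follows essentially the same route as the paper: apply $I^\alpha_{0^+}$ to the equation, use Lemma~\ref{tdtp} together with the semigroup property to produce the Taylor-polynomial corrections and the term $a\,I^{\alpha-\beta}_{0^+}x$, exploit (A) and the eventual positivity of $x$ to make the tail contributions sign-definite, check that the weight $t^{\beta-\alpha-n+1}$ annihilates or bounds the remaining pieces, and contradict (B). The only cosmetic difference is that you isolate $I^\alpha_{0^+}g$ and bound its weighted $\liminf$ from below, whereas the paper bounds $t^{\beta-\alpha-n+1}x(t)$ from above and contradicts $x>0$; these are the same inequality read in opposite directions.
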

\begin{proof}
	The theorem will be proven by contradiction. Suppose that $x$ is a non-oscillatory solution of the system \eqref{eq4}--\eqref{dkd1}. Without loss of generality, assume that $x(t) > 0$ for $t\geq t_1$. According to the assumption \textup{(A)}, we can find $t_2>\max\{T,t_1\}$ so that $x(t)f(t,x(t) \leq 0$ for all $t \geq t_2$ and thus $f(t,x(t)) \leq 0$ for all $t \geq t_2.$ On the other hand, for all $t > 0$, we have
   \begin{align*}
       I^\alpha_{0^+}\left(^C D^\alpha_{0^+}x(t) + a\,\,{}^C D^\beta_{0^+}x(t)\right) &= x(t) - \sum_{k=0}^{n-1}\frac{x^{(k)}(0)}{k!}t^k + a I^{\alpha-\beta}_{0^+}\left(x(t) - \sum_{k=0}^{m-1}\frac{x^{(k)}(0)}{k!}t^k\right) \\
       &\hspace{-2cm}=x(t) - \sum_{k=0}^{n-1}\frac{x^{(k)}(0)}{k!}t^k - \sum_{k=0}^{m-1}\frac{ax^{(k)}(0)}{\Gamma(\alpha-\beta+1+k)}t^{\alpha-\beta+k} + aI^{\alpha-\beta}_{0^+}x(t),
   \end{align*}
   where $m= \lceil  \beta \rceil$. From this,
    \begin{align*}
        x(t)= &\sum_{k=0}^{n-1}\frac{x^{(k)}(0)}{k!}t^k + \sum_{k=0}^{m-1}\frac{ax^{(k)}(0)}{\Gamma(\alpha-\beta+1+k)}t^{\alpha-\beta+k} -\frac{a}{\Gamma(\alpha-\beta)}\int_0^t(t-s)^{\alpha-\beta-1}x(s)ds\\
        &+\frac{1}{\Gamma(\alpha)}\int_0^t(t-s)^{\alpha-1}f(s,x(s))ds +  \frac{1}{\Gamma(\alpha)}\int_0^t(t-s)^{\alpha-1}g(s)ds,\;\;\forall t>0.
    \end{align*} 
    Put $b = \max\{|x_0|,|x_1|,\ldots,|x_{n-1}|\}$, $M_1 = \max_{s\in [0,t_2]}|x(s)|$, $M_2 = \max_{s\in [0,t_2]}|f(s,x(s))|$. Since $0<\beta < \alpha$, $a \geq 0 $, $x(s) > 0$, $f(s, x(s)) \leq 0$ for all $s\ \geq t_2$, we obtain
    \begin{align*}
        x(t)\leq &\sum_{k=0}^{n-1} \frac{b}{k!}t^k + \sum_{k=0}^{m-1}\frac{ab}{\Gamma(\alpha-\beta+1+k)}t^{\alpha-\beta+k} + \frac{aM_1}{\Gamma(\alpha-\beta+1)}t^{\alpha-\beta}\\
        &+\frac{1}{\Gamma(\alpha)}\int_0^{t_2}(t-s)^{\alpha-1}f(s;x(s))ds +\frac{1}{\Gamma(\alpha)}\int_0^t(t-s)^{\alpha-1}g(s)ds,\;\;\forall t>t_2.
    \end{align*}
    This implies that for all $t>t_2$,
    \begin{align}\label{p1}
        t^{\beta-\alpha-n+1}x(t)\leq &\sum_{k=0}^{n-1} \frac{b}{k!}t^{k+\beta-\alpha-n+1} + \sum_{k=0}^{m-1}\frac{ab}{\Gamma(\alpha-\beta+1+k)}t^{k-n+1} + \frac{aM_1}{\Gamma(\alpha-\beta+1)}t^{1-n}\nonumber\\
        &\hspace{-2cm}+\frac{t^{\beta-n+1-\alpha}}{\Gamma(\alpha)}\int_0^{t_2}(t-s)^{\alpha-1}f(s,x(s))ds +\frac{t^{\beta-\alpha-n+1}}{\Gamma(\alpha)}\int_0^t(t-s)^{\alpha-1}g(s)ds.
    \end{align}
    Consider the case when $\alpha \leq 1$. Notice that for all $s \in [0,t_2]$, $t>t_2$, $t(t_2-s)\leq t_2(t-s)$. It deduces that $\frac{t}{t-s}\leq \frac{t_2}{t_2-s}$. Hence, 
    \begin{align}\label{p2}
        \frac{t^{1-\alpha}}{\Gamma(\alpha)}\int_0^{t_2}(t-s)^{\alpha-1}f(s,x(s))ds &\leq \frac{M_2 t^{1-\alpha}}{\Gamma(\alpha)}\int_0^{t_2}(t-s)^{\alpha-1}ds \nonumber \\
        &=\frac{M_2}{\Gamma(\alpha)}\int_0^{t_2}\left ( \frac{t}{t-s} \right )^{1-\alpha}ds\nonumber \\
        &\leq \frac{M_2}{\Gamma(\alpha)}\int_0^{t_2}\left ( \frac{t_2}{t_2-s} \right )^{1-\alpha}ds\nonumber \\
        &=\frac{M_2 t_2}{\Gamma(\alpha+1)},\;\;\forall t>t_2.
    \end{align}
    To deal with $\alpha > 1$, we see that
     \begin{align}\label{p3}
        \frac{t^{1-\alpha}}{\Gamma(\alpha)}\int_0^{t_2}(t-s)^{\alpha-1}f(s,x(s))ds &\leq \frac{M_2 t^{1-\alpha}}{\Gamma(\alpha)}\int_0^{t_2}(t-s)^{\alpha-1}ds \nonumber \\
        &=\frac{M_2}{\Gamma(\alpha)}\int_0^{t_2}\left ( \frac{t-s}{t} \right )^{\alpha-1}ds\nonumber \\
        &\leq \frac{M_2}{\Gamma(\alpha)}\int_0^{t_2}ds\nonumber \\
        &=\frac{M_2 t_2}{\Gamma(\alpha)},\;\;\forall t>t_2.
    \end{align}
   Since $k+\beta-\alpha-n+1 < 0$ for all $k=0,\ldots,n-1$, $k-n+1 \leq 0$ for all $k=0,\ldots,m-1$ and $1-n \leq 0$, by combining \eqref{p1}, \eqref{p2} and \eqref{p3}, there is a positive constant $c =c(t_2)$ such that
     \begin{align}
        t^{\beta-\alpha-n+1}x(t) \leq c + \frac{t^{\beta-\alpha-n+1}}{\Gamma(\alpha)}\int_0^t(t-s)^{\alpha-1}g(s)ds, \quad \forall t\geq t_2,
    \end{align}
    which together with (B) leads to
    \begin{align*}
        \liminf_{t \to \infty}\, t^{\beta - \alpha-n+1}x(t) \leq c + \liminf_{t \to \infty}\, \frac{t^{\beta - \alpha-n+1}}{\Gamma(\alpha)}\int_0^t(t-s)^{\alpha-1}g(s)ds = -\infty,
    \end{align*}
    contrary to the counterfactual hypothesis that $x(t) > 0$ for all $t> t_2.$ The proof is complete.
\end{proof}
\begin{corollary}\label{hq1}
    Consider the system \eqref{eq4}--\eqref{dkd1}. Suppose that $f$ satisfies the assumption (A) in Theorem \ref{dl1} and  $g$ has the following form.
    \begin{itemize}
        \item [(B)'] $g(t) = ^C D^\alpha_{0^+} (t+1)^{\sigma}h(t)$, $t\geq 0$, where $\sigma > \alpha-\beta+n-1 > 0$, $h$ is continuously differentiable on $[0,\infty)$, $\limsup_{t\to \infty}h(t) > 0$ and $\liminf_{t\to \infty}h(t) < 0$.
    \end{itemize}
    Then, its solution (if it exists globally) is oscillatory.   
\end{corollary}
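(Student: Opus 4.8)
The plan is to show that hypothesis (B)$'$ forces the integral condition (B) of Theorem \ref{dl1}; once (B) is in hand, the conclusion is immediate from that theorem, since (A) is assumed. The bridge between the two conditions is the fractional inversion formula of Lemma \ref{tdtp}, applied to the function $\phi(t):=(t+1)^\sigma h(t)$ of which $g$ is, by (B)$'$, the Caputo $\alpha$-derivative.

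First I would rewrite the quantity appearing in (B). Observing that $\frac{1}{\Gamma(\alpha)}\int_0^t(t-s)^{\alpha-1}g(s)\,\du s = I^\alpha_{0^+}g(t)$ and substituting $g={}^C D^\alpha_{0^+}\phi$, Lemma \ref{tdtp} yields
$$\frac{1}{\Gamma(\alpha)}\int_0^t(t-s)^{\alpha-1}g(s)\,\du s = I^\alpha_{0^+}\bigl({}^C D^\alpha_{0^+}\phi\bigr)(t) = \phi(t) - P(t),$$
where $P(t)=\sum_{k=0}^{\lfloor\alpha\rfloor-1}\frac{\phi^{(k)}(0)}{k!}t^k$ is a polynomial of degree at most $n-1$. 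This step tacitly requires $\phi$ to be continuously Caputo $\alpha$-differentiable, so that $g$ is a genuine continuous function and Lemma \ref{tdtp} is applicable; I would first record that the smoothness of $(t+1)^\sigma$ on $[0,\infty)$ together with the assumed regularity of $h$ guarantees this.

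Next I would carry out the asymptotic analysis of the quantity in (B), which by the above equals $\Gamma(\alpha)\,t^{\beta-\alpha-n+1}\bigl[\phi(t)-P(t)\bigr]$; since $\Gamma(\alpha)>0$, it suffices to analyze $t^{\beta-\alpha-n+1}[\phi(t)-P(t)]$. For the polynomial part, each monomial contributes $t^{\beta-\alpha-n+1+k}$ with $k\le n-1$, so every exponent is at most $\beta-\alpha<0$; hence $t^{\beta-\alpha-n+1}P(t)\to 0$. For the main part, writing $(t+1)^\sigma=t^\sigma(1+1/t)^\sigma$ gives
$$t^{\beta-\alpha-n+1}\phi(t)=t^{\,\sigma+\beta-\alpha-n+1}\Bigl(1+\tfrac1t\Bigr)^{\sigma}h(t),$$
and the hypothesis $\sigma>\alpha-\beta+n-1$ makes the exponent $\sigma+\beta-\alpha-n+1$ strictly positive while $(1+1/t)^\sigma\to 1$. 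Thus the sign and the blow-up of the whole expression are governed entirely by $h$.

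Finally I would exploit the oscillation of $h$. Since $\limsup_{t\to\infty}h(t)=:L>0$, I pick $t_j\to\infty$ with $h(t_j)\to L$; then $h(t_j)>L/2$ and $(1+1/t_j)^\sigma\ge 1$ eventually, while $t_j^{\,\sigma+\beta-\alpha-n+1}\to\infty$, so the quantity in (B) tends to $+\infty$ along $\{t_j\}$, yielding its first line. Symmetrically, $\liminf_{t\to\infty}h(t)<0$ produces a sequence on which the quantity tends to $-\infty$, yielding the second line. With (B) established, Theorem \ref{dl1} applies and every global solution is oscillatory. I expect the only delicate points to be the justification that $g$ is continuous and that $\phi$ is continuously Caputo $\alpha$-differentiable so that Lemma \ref{tdtp} genuinely applies, together with the bookkeeping that the polynomial remainder $P$ is asymptotically negligible; the core $\limsup/\liminf$ extraction is then routine.
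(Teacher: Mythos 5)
Your proposal is correct and follows essentially the same route as the paper: rewrite the integral in (B) as $\Gamma(\alpha)I^\alpha_{0^+}g$, invert via Lemma \ref{tdtp} applied to $\phi(t)=(t+1)^\sigma h(t)$, and use $\sigma>\alpha-\beta+n-1$ together with the sign oscillation of $h$ to force the two limits in (B). In fact you are slightly more careful than the paper, which silently drops the Taylor-polynomial remainder $P(t)$ from Lemma \ref{tdtp} and factors the $\limsup$ through a product; your verification that $t^{\beta-\alpha-n+1}P(t)\to 0$ and your subsequence extraction supply exactly the missing justifications.
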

\begin{proof}
     It is sufficient to check that $g$ verifies the condition (B) in Theorem \ref{dl1}. Indeed, due to $h$ is continuously differentiable on $[0,\infty)$, according to Lemma \ref{tdtp}, we obtain
        \begin{align*}
            t^{\beta-\alpha-n+1}\int_0^t (t-s)^{\alpha-1}g(s)ds &= \Gamma (\alpha)  t^{\beta-\alpha-n+1} I^\alpha_{0^+}g(t) \\
            &= \Gamma (\alpha)  t^{\beta-\alpha-n+1}(t+1)^{\sigma}h(t), \quad t > 0.
        \end{align*}
        Notice that $\sigma > \alpha-\beta+n-1$, therefore
        \begin{align*}
            \limsup_{t \to \infty} t^{\beta-\alpha-n+1}\int_0^t (t-s)^{\alpha-1}g(s)ds = \lim_{t \to \infty}t^{ \beta -\alpha-n+1 }(t+1)^{\sigma}\limsup_{t \to \infty}h(t) = +\infty.
        \end{align*}
        In the same manner, we can see that
        \begin{align*}
            \liminf_{t \to \infty} t^{\beta-\alpha-n+1}\int_0^t (t-s)^{\alpha-1}g(s)ds = -\infty,
        \end{align*}
        which completes the proof.
\end{proof}
\begin{example}
Consider the fractional differential equation \eqref{eq4} with $\alpha = 1/2, \beta = 1/3$, $a=2$, $f(t,x) = -(t-1)x -(t^2-3)x^3$ and $g(t) = ^C D^{1/2}_{0^+}\left(t^{7/6}\sin t\right)$. This equation is rewritten as
 \begin{align}\label{vd}
^C D^{1/2}_{0^+}x(t) + 2\,\,{}^C D^{1/3}_{0^+}x(t) = -(t-1)x(t) -(t^2-3)x^3(t) + ^C D^{1/2}_{0^+}(t^{7/6}\sin t),\;t>0.
\end{align}
 It is easy to check that $xf(t,x) \leq 0$ for all $t \geq 2$. Thus $f$ satisfies the condition $\textup{(A)}$. Moreover, for all $t>0$,
 \begin{align*}
     t^{-1/6}\int_0^t (t-s)^{-1/2}g(s)ds &= \Gamma(1/2) t^{-1/6} I^{1/2}_{0^+}g(t) = \Gamma(1/2) t^{-1/6} I^{1/2}_{0^+}\left(^C D^{1/2}_{0^+}\left(t^{7/6}\sin t\right)\right)\\
     & = \Gamma(1/2) t^{-1/6} t^{7/6}\sin t = \Gamma(1/2)t\sin t.
 \end{align*}
This implies that 
\begin{align*}
    &\limsup_{t\to\infty}t^{-1/6}\int_0^t (t-s)^{-1/2}g(s)ds = +\infty, \\
    &\liminf_{t \to \infty} t^{-1/6}\int_0^t (t-s)^{-1/2}g(s)ds = -\infty.
\end{align*}
Thus the condition $\textup{(B)}$ is verified. By Theorem \ref{dl1}, all solutions of the equation \eqref{vd} are oscillatory. In Figure 1, we simulate the orbit of the solution with the initial condition $x(0)=1$ on the interval $[0,70]$. 
\begin{figure}
		\begin{center}
			\includegraphics[scale=.7]{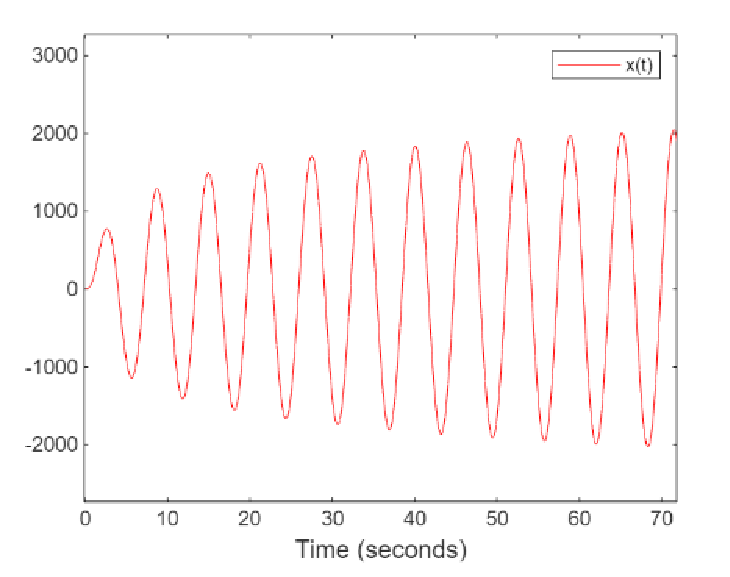}
		\end{center}
		\begin{center}
			\caption{The orbit of the solution to equation \eqref{vd} with the initial condition $x_0 = 1$ on the interval $[0,70]$.}
		\end{center}
\end{figure}
\end{example}
\begin{remark}
   Under assumptions (A) and (B), the conclusion of Theorem \ref{dl1} is still true for the following general multi-order fractional system
     \begin{equation*}
       ^C D^{\alpha_k}_{0^+}x(t)+ \sum_{i=1}^{k-1} a_i\, ^C D^{\alpha_i}_{0^+}x(t)  
        = f(t,x(t)) +g(t),\;t>0
    \end{equation*}
    with arbitrary initial values
    \begin{equation*}
 x(0) = x_0, x^{(1)}(0) = x_1, \ldots, x^{(n-1)}(0) = x_{n-1},
 \end{equation*}
    here $0 < \alpha_1 < \alpha_2 < \ldots \alpha_k \leq \left \lceil \alpha_k \right \rceil =n\in \mathbb N$, $a_i,\; i=1,\ldots,k-1,$ are nonnegative real numbers.
\end{remark}
\subsection{The oscillation of equations with higher-order nonlinearities} 
We now study the multi-term fractional differential equation with higher-order nonlinearities
\begin{align}\label{eq5}
 ^CD^\alpha_{0^+}x(t) + a\,\,^CD^\beta_{0^+}x(t)+p(t)x(t) + \sum_{i=1}^{k}
q_i(t) \mathrm{sgn}\left(x(t)\right)|x(t)|^{\lambda_i} =g(t), \qquad t > 0
\end{align} 
with the initial conditions 
\begin{align}\label{dkd2}
 x(0) = x_0, x^{(1)}(0) = x_1, \ldots, x^{(n-1)}(0) = x_{n-1},
 \end{align}
 where $0 < \beta < \alpha \leq  \lceil  \alpha \rceil = n\in\mathbb N$, $x_0, x_1, \ldots, x_{n-1} \in \mathbb R, $ $a$ is a nonnegative real number, $ 0<\lambda_1 < \lambda_2 < \cdots < \lambda_k,\; k \geq 2$ are positive real numbers, $p : [0, \infty) \rightarrow \mathbb R$, $q_i :[0, \infty) \rightarrow \mathbb R,\; i=1,2,\ldots,k,$ $\mathrm{sgn}(\cdot)$ is the sign function, and $g : [0, \infty) \rightarrow \mathbb R$ are continuous functions. Suppose that the following conditions are true.
\begin{itemize}
    \item [(C)] $\lambda_i \neq 1$ for all $i =1, 2, \ldots,k$  and satisfy one of the following two conditions
    \begin{itemize}
        \item [$(\textup{C})_1$] $1 < \lambda_1 < \lambda_2 < \cdots < \lambda_k$.
        \item [$(\textup{C})_2$] There exists $l \in \{1,\ldots,k-1\}$ such that $\lambda_1<\cdots<\lambda_l < 1 < \lambda_{l+1}<\cdots<\lambda_k$.
    \end{itemize}
    \item [(D)] There exists $T_1 \geq 0$ such that $p(t) \neq 0$ for all $t \geq T_1.$
    \item [(E)] There exists $T_2 \geq 0$ such that
    $\mathrm{sgn}(\lambda_i-1)q_i(t) >0 , i = 1,2, \ldots,m$ for all $t \geq T_2$.
    \item [(F)] There exists $T \geq \max\{T_1, T_2\}$ such that
    \begin{align*}
        &\liminf_{t \to \infty}t^{\beta-\alpha-n+1}\int_T^t (t-s)^{\alpha-1}\left(\gamma\sum_{i=1}^k  |p(s)|^{\frac{\lambda_i}{\lambda_i-1}}|q_i(s)|^{\frac{1}{1-\lambda_i}} + g(s)\right)ds = -\infty, \\
        & \limsup_{t \to \infty}t^{\beta-\alpha-n+1}\int_T^t (t-s)^{\alpha-1}\left(-\gamma\sum_{i=1}^k |p(s)|^{\frac{\lambda_i}{\lambda_i-1}}|q_i(s)|^{\frac{1}{1-\lambda_i}} + g(s)\right)ds = + \infty,
    \end{align*}
    where 
    \begin{align*}
        &\gamma: = \left( \frac{1}{k \lambda_1}\right)^{\frac{\lambda_k}{\lambda_k-1}},\,\,\, \text{if}\; (\textup{C})_1\; \text{holds}, \\
        &\gamma:= \max \left\{ \max_{i \in \{1,\ldots,l\}} (1-
        \lambda_i)\left( \frac{l\lambda_i}{\sum_{j=l+1}^k \lambda_j -1} \right)^{\frac{\lambda_i}{1-\lambda_i}}, \lambda_k-1 \right\},\,\,\, \text{if}\;(\textup{C})_2\;  \text{holds}.
    \end{align*}
\end{itemize}
\begin{theorem}\label{dl3.5}
    Assume that the conditions (C), (D), (E), and (F) hold. Then an arbitrary solution of \eqref{eq5} (if it exists globally) is oscillatory. 
\end{theorem}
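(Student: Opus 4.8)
The plan is to run the contradiction scheme of Theorem \ref{dl1}, with the nonlinearity $p(t)x+\sum_{i=1}^k q_i(t)\,\mathrm{sgn}(x)|x|^{\lambda_i}$ playing the role of $f(t,x)$, except that its sign is now controlled through the Young-type inequalities of Lemma \ref{bdt} rather than through a hypothesis like (A). Suppose for contradiction that \eqref{eq5} admits a non-oscillatory global solution $x$. The reflection $x\mapsto -x$, $g\mapsto -g$ carries \eqref{eq5} into an equation of the same type and interchanges the two limits in (F), so it suffices to exclude the case where $x$ is eventually positive; enlarging the threshold $T$ of (F) if necessary, we may assume $x(t)>0$ for all $t\ge T$. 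Applying $I^\alpha_{0^+}$ to \eqref{eq5} and using Lemmas \ref{nhtp} and \ref{tdtp} as in Theorem \ref{dl1}, I would obtain, for $t>0$,
\begin{align*}
x(t)=P(t)-\frac{a}{\Gamma(\alpha-\beta)}\int_0^t (t-s)^{\alpha-\beta-1}x(s)\,\mathrm{d}s+\frac{1}{\Gamma(\alpha)}\int_0^t (t-s)^{\alpha-1}\Big(g(s)-p(s)x(s)-\sum_{i=1}^k q_i(s)x(s)^{\lambda_i}\Big)\mathrm{d}s,
\end{align*}
where $P(t)=\sum_{j=0}^{n-1}\frac{x^{(j)}(0)}{j!}t^j+\sum_{j=0}^{m-1}\frac{a\,x^{(j)}(0)}{\Gamma(\alpha-\beta+1+j)}t^{\alpha-\beta+j}$, with $m=\lceil\beta\rceil$, is the initial-data polynomial, and where, since $x(s)>0$, the signed nonlinearity has already been written as $q_i(s)x(s)^{\lambda_i}$.

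The heart of the argument, and the step I expect to be the main obstacle, is the pointwise bound
\begin{align*}
-p(s)x(s)-\sum_{i=1}^k q_i(s)x(s)^{\lambda_i}\le \gamma\sum_{i=1}^k |p(s)|^{\frac{\lambda_i}{\lambda_i-1}}|q_i(s)|^{\frac{1}{1-\lambda_i}},\qquad s\ge T,
\end{align*}
whose whole purpose is to eliminate the unknown $x(s)$ from the right-hand side. I would start from $-p(s)x(s)\le |p(s)|\,x(s)$ and then distribute this linear term among the $k$ nonlinear terms. Under $(\mathrm{C})_1$ every $\lambda_i>1$, so (E) forces $q_i(s)>0$; applying Lemma \ref{bdt}(i) to each summand with $X=q_i(s)^{1/\lambda_i}x(s)$ and a suitably chosen $Y$ bounds $|p(s)|x(s)-\sum_i q_i(s)x(s)^{\lambda_i}$ by a sum $\sum_i c_i|p(s)|^{\lambda_i/(\lambda_i-1)}|q_i(s)|^{1/(1-\lambda_i)}$, and $\gamma$ is obtained as the uniform bound on the coefficients $c_i$ after optimizing the split. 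Under $(\mathrm{C})_2$ the sublinear terms ($\lambda_i<1$, for which (E) forces $q_i(s)<0$) are themselves positive and must be dominated: I would use Lemma \ref{bdt}(ii) in the rearranged form $X^{\lambda_i}\le \lambda_i XY^{\lambda_i-1}+(1-\lambda_i)Y^{\lambda_i}$ to trade each sublinear term for a contribution linear in $x(s)$ plus a constant, and then absorb the whole linear-in-$x(s)$ budget into the superlinear terms $-\sum_{i>l}q_i(s)x(s)^{\lambda_i}$ via Lemma \ref{bdt}(i); the two maxima in the definition of $\gamma$ are exactly the optimal constants of these two families of splits. Verifying that all the resulting coefficients are genuinely majorized by $\gamma$ is the delicate bookkeeping.

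With the pointwise bound in hand, I would substitute it into the representation above and multiply by $t^{\beta-\alpha-n+1}>0$. As in Theorem \ref{dl1}, after this multiplication the polynomial $P$, the boundary contributions, and the $a$-integral over $[0,T]$ all acquire nonpositive exponents of $t$ and hence stay bounded (those with strictly negative exponents even tending to $0$); the $a$-integral over $[T,t]$ contributes a nonpositive amount and is discarded; and the integrals over $[0,T]$ of $g$, of $x$, and of the nonlinear terms are controlled by the continuity of $g$ and the boundedness of $x$ on $[0,T]$ together with the elementary estimate $t(T-s)\le T(t-s)$. Absorbing all of these into a single constant $c=c(T)$ leaves
\begin{align*}
t^{\beta-\alpha-n+1}x(t)\le c+\frac{t^{\beta-\alpha-n+1}}{\Gamma(\alpha)}\int_T^t (t-s)^{\alpha-1}\Big(\gamma\sum_{i=1}^k |p(s)|^{\frac{\lambda_i}{\lambda_i-1}}|q_i(s)|^{\frac{1}{1-\lambda_i}}+g(s)\Big)\mathrm{d}s,\qquad t\ge T.
\end{align*}
Taking $\liminf_{t\to\infty}$ and invoking the first limit in (F) then gives $\liminf_{t\to\infty}t^{\beta-\alpha-n+1}x(t)=-\infty$, contradicting $x(t)>0$. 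The eventually negative case follows from the reflection noted at the outset, using the $\limsup$ in (F), which completes the scheme.
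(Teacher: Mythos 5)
Your proposal is correct and follows essentially the same route as the paper: the contradiction scheme of Theorem \ref{dl1}, the integral representation obtained by applying $I^{\alpha}_{0^+}$, and above all the same pointwise elimination of $x(s)$ via Lemma \ref{bdt} --- the equal split $\tfrac{1}{k}|p|x$ under $(\mathrm{C})_1$ and the $A$-budget trade between sublinear and superlinear terms under $(\mathrm{C})_2$, with $\gamma$ as the uniform constant --- followed by multiplication by $t^{\beta-\alpha-n+1}$ and the $\liminf$ from (F). The only cosmetic difference is that you make the reflection argument for the eventually negative case explicit where the paper simply invokes ``without loss of generality.''
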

\begin{proof}
    Suppose to the contrary that there exists a non-oscillatory solution $x$ of system \eqref{eq5}--\eqref{dkd2}. Without loss of generality, we can assume that $x(t) > 0$ for $t > t_1.$ Due to the assumptions (D) and (E), we can find $t_2 > \max\{t_1,T\}$ such that $p(t) \neq 0$ and $\mathrm{sgn}(\lambda_i-1)q_i(t) > 0,\; i = 1,2, \ldots,k$, for $t \geq t_2$. Using the same arguments as in the proof of Theorem \ref{dl1}, for all $t > t_2$, we obtain
    \begin{align*}
        x(t)= &\sum_{i=0}^{n-1}\frac{x^{(i)}(0)}{i!}t^i + \sum_{i=0}^{m-1}\frac{ax^{(i)}(0)}{\Gamma(\alpha-\beta+1+i)}t^{\alpha-\beta+i} -\frac{a}{\Gamma(\alpha-\beta)}\int_0^t(t-s)^{\alpha-\beta-1}x(s)ds\\
        &-\frac{1}{\Gamma(\alpha)}\int_0^t(t-s)^{\alpha-1}p(s)x(s)ds -  \frac{1}{\Gamma(\alpha)} \sum_{i=1}^k \int_0^t (t-s)^{\alpha-1}q_i(s)\mathrm{sgn}\left(x(s)\right)|x(s)|^{\lambda_i}ds \\
        &+\frac{1}{\Gamma(\alpha)} \int_0^t (t-s)^{\alpha-1}g(s)ds\\
        &=\sum_{i=0}^{n-1}\frac{x^{(i)}(0)}{i!}t^i + \sum_{i=0}^{m-1}\frac{ax^{(i)}(0)}{\Gamma(\alpha-\beta+1+i)}t^{\alpha-\beta+i} -\frac{a}{\Gamma(\alpha-\beta)}\int_0^{t_2}(t-s)^{\alpha-\beta-1}x(s)ds\\
        &-\frac{1}{\Gamma(\alpha)}\int_0^{t_2}(t-s)^{\alpha-1}p(s)x(s)ds -  \frac{1}{\Gamma(\alpha)} \sum_{i=1}^k \int_0^{t_2} (t-s)^{\alpha-1}q_i(s)\mathrm{sgn}\left(x(s)\right)|x(s)|^{\lambda_i}ds\\
        &+\frac{1}{\Gamma(\alpha)} \int_0^T (t-s)^{\alpha-1}g(s)ds-\frac{a}{\Gamma(\alpha-\beta)}\int_{t_2}^t(t-s)^{\alpha-\beta-1}x(s)ds\\
        &-\frac{1}{\Gamma(\alpha)}\int_{t_2}^t(t-s)^{\alpha-1}p(s)x(s)ds -  \frac{1}{\Gamma(\alpha)} \sum_{i=1}^k \int_{t_2}^t (t-s)^{\alpha-1}q_i(s)\mathrm{sgn}\left(x(s)\right)|x(s)|^{\lambda_i}ds \\
        &+\frac{1}{\Gamma(\alpha)} \int_T^t (t-s)^{\alpha-1}g(s)ds,
    \end{align*} 
    where $m= \lceil  \beta \rceil$. Notice that $(t-s)^{\alpha-\beta-1} \leq (t-t_2)^{\alpha-\beta-1}$ for all $s \in [0, t_2]$, $t>t_2$ if $\alpha-\beta-1 \leq 0$ and $(t-s)^{\alpha-\beta-1} \leq t^{\alpha-\beta-1}$ for all $s \in [0, t_2]$, $t>t_2$ if $\alpha-\beta-1 > 0$. Thus, $(t-s)^{\alpha-\beta-1} \leq (t-t_2)^{\alpha-\beta-1} +  t^{\alpha-\beta-1}$ for all $s \in [0, t_2]$, which implies
    \begin{align}\label{3.2.17}
         -\frac{a}{\Gamma(\alpha-\beta)}\int_0^{t_2}(t-s)^{\alpha-\beta-1}x(s)ds &\leq \frac{a}{\Gamma(\alpha-\beta)}\left( (t-t_2)^{\alpha-\beta-1} +  t^{\alpha-\beta-1}\right)\int_0^{t_2}|x(s)|ds \nonumber\\
         & = M_1\left( (t-t_2)^{\alpha-\beta-1} +  t^{\alpha-\beta-1}\right),\;t>t_2.
    \end{align}
    Furthermore, it is not difficult to check 
    \begin{align}\label{3.2.18}
        -\frac{1}{\Gamma(\alpha)}\int_0^{t_2}(t-s)^{\alpha-1}p(s)x(s)ds \leq M_2\left((t-t_2)^{\alpha-1} + t^{\alpha-1}\right),
    \end{align}
    \begin{align}\label{3.2.19}
        \frac{1}{\Gamma(\alpha)} \sum_{i=1}^k \int_0^{t_2} (t-s)^{\alpha-1}q_i(s)\mathrm{sgn}\left(x(s)\right)|x(s)|^{\lambda_i}ds \leq M_3\left((t-t_2)^{\alpha-1} + t^{\alpha-1}\right),
    \end{align}
    and
    \begin{align}\label{3.2.20}
        \frac{1}{\Gamma(\alpha)} \int_0^T (t-s)^{\alpha-1}g(s)ds \leq M_4\left((t-t_2)^{\alpha-1} + t^{\alpha-1}\right)
    \end{align}
    for all $t>t_2$. Take $b: = \max\{|x_0|,|x_1|,\ldots,|x_{n-1}|\}$. For $t \geq t_2$, it follows from \eqref{3.2.17}, \eqref{3.2.18}, 
  \eqref{3.2.19} and \eqref{3.2.20} that
    \begin{align}\label{pt16}
        x(t) \leq&\; \sum_{k=0}^{n-1}\frac{b}{k!}t^k + \sum_{k=0}^{m-1}\frac{ab}{\Gamma(\alpha-\beta+1+k)}t^{\alpha-\beta+k} + M_1\left( (t-t_2)^{\alpha-\beta-1} +  t^{\alpha-\beta-1}\right)\nonumber \\
        & + (M_2+M_3+M_4)\left((t-t_2)^{\alpha-1} + t^{\alpha-1}\right)+\frac{1}{\Gamma(\alpha)} \int_{T}^t (t-s)^{\alpha-1}g(s)ds \nonumber\\
        &+ \frac{1}{\Gamma(\alpha)}\int_{t_2}^t (t-s)^{\alpha-1}\left(|p(s)|x(s) -\sum_{i=1}^m q_i(s)x(s)^{\lambda_i}\right)ds.
    \end{align}
    Consider the case when $(\textup{C})_1$ is true, that is, $1 < \lambda_1 < \lambda_2 < \cdots < \lambda_k$. We get
    $q_i(t) > 0,\;i=1,2,\ldots,k$ and $t \ge t_2$. Then,
    \begin{align}\label{m1}
        |p(s)|x(s) - \sum_{i=1}^k q_i(s)x(s)^{\lambda_i} = \sum_{i=1}^k \left(\frac{1}{k}|p(s)|x(s) - q_i(s)x(s)^{\lambda_i}\right), \; s \geq t_2.
    \end{align}
    For each $i=1,2,\ldots,k$, let 
    \begin{align*}
        X_i(s): = q_i(s)^{\frac{1}{\lambda_i}}x(s), Y_i(s): = \left(\frac{1}{m\lambda_i}|p(s)|q_i(s)^{\frac{-1}{\lambda_i}}\right)^{\frac{1}{\lambda_i-1}}, \; s \geq t_2.
    \end{align*}
    Due to Lemma \ref{bdt}(i), we see 
    \begin{align*}
        \lambda_i q_i(s)^{\frac{1}{\lambda_i}}x(s) \frac{1}{k\lambda_i}|p(s)|q_i(s)^{\frac{-1}{\lambda_i}} - q_i(s)x(s)^{\lambda_i} \leq (\lambda_i-1)\left(\frac{1}{k\lambda_i}|p(s)|q_i(s)^{\frac{-1}{\lambda_i}}\right)^{\frac{\lambda_i}{\lambda_i-1}},
    \end{align*}
    this means that
    \begin{align}\label{m2}
        \frac{1}{m}|p(s)|x(s) - q_i(s)x(s)^{\lambda_i}&\leq (\lambda_i-1)\left(\frac{1}{m\lambda_i}|p(s)|q_i(s)^{\frac{-1}{\lambda_i}}\right)^{\frac{\lambda_i}{\lambda_i-1}} \nonumber \\
        &\leq \gamma |p(s)|^{\frac{\lambda_i}{\lambda_i-1}}|q_i(s)|^{\frac{1}{1-\lambda_i}}.
    \end{align}
    From \eqref{m1} and \eqref{m2}, it leads to 
    \begin{align}\label{ss1}
        |p(s)|x(s) - \sum_{i=1}^k q_i(s)x(s)^{\lambda_i} \leq \gamma \sum_{i=1}^k |p(s)|^{\frac{\lambda_i}{\lambda_i-1}}|q_i(s)|^{\frac{1}{1-\lambda_i}}, \quad s \geq t_2.
    \end{align}
    If the assumption $(\textup{C})_2$ holds, that is, there is an index $l$ satisfying $ \lambda_1  < \cdots < \lambda_l < 1 < \lambda_{l+1} < \cdots < \lambda_k$. It implies that 
    $q_i(t) < 0 ,i=1,\ldots,l$ and $q_i(t) > 0 ,i=l+1,\ldots,k$ for $t \geq t_2$. In this case, we write
    \begin{align}\label{pt17}
        |p(s)|x(s) - \sum_{i=1}^k &q_i(s)x(s)^{\lambda_i} = |p(s)|x(s) + \sum_{i=1}^l|q_i(s)|x(s)^{\lambda_i} - \sum_{i=l+1}^k q_i(s)x(s)^{\lambda_i}\nonumber\\
        &\hspace{-2cm}= \sum_{i=1}^l\left(|q_i(s)|x(s)^{\lambda_i} - A|p(s)|x(s)\right)+ \sum_{i=l+1}^k\left(\lambda_i|p(s)|x(s) -q_i(s)x(s)^{\lambda_i}\right), 
    \end{align}
    where $A = \frac{\sum_{j=l+1}^k \lambda_j -1}{l} >0$. For each $i=1,\ldots,l$, define
    \begin{align*}
        X_i(s): = |q_i(s)|^{\frac{1}{\lambda_i}}x(s), Y_i(s): = \left(\frac{A}{\lambda_i}|p(s)||q_i(s)|^{\frac{-1}{\lambda_i}}\right)^{\frac{1}{\lambda_i-1}}, \; s \geq t_2.
    \end{align*}
    By Lemma \ref{bdt}(ii),
    \begin{align*}
        \lambda_i |q_i(s)|^{\frac{1}{\lambda_i}}x(s)\frac{A}{\lambda_i}|p(s)|\,|q_i(s)|^{\frac{-1}{\lambda_i}} - |q_i(s)|x(s)^{\lambda_i} \geq (\lambda_i-1)\left(\frac{A}{\lambda_i}|p(s)|\,|q_i(s)|^{\frac{-1}{\lambda_i}}\right)^{\frac{\lambda_i}{\lambda_i-1}}.
    \end{align*}
    Thus,
    \begin{align}\label{pt18}
        |q_i(s)|x(s)^{\lambda_i} - A|p(s)|x(s) &\leq (1-\lambda_i)\left(\frac{A}{\lambda_i}|p(s)||q_i(s)|^{\frac{-1}{\lambda_i}}\right)^{\frac{\lambda_i}{\lambda_i-1}} \nonumber\\
        & \leq \gamma |p(s)|^{\frac{\lambda_i}{\lambda_i-1}}|q_i(s)|^{\frac{1}{1-\lambda_i}},\; i=1,\ldots,l, s\geq t_2.
    \end{align}
    For each $i=l+1,\ldots,m$, let
    \begin{align*}
        X_i(s):= q_i(s)^{\frac{1}{\lambda_i}}x(s), Y_i(s):= \left(|p(s)|q_i(s)^{\frac{-1}{\lambda_i}}\right)^{\frac{1}{\lambda_i-1}}, \quad s \geq t_2. 
    \end{align*}
     From Lemma \ref{bdt}(i), 
    \begin{align*}
        \lambda_i q_i(s)^{\frac{1}{\lambda_i}}x(s)|p(s)|q_i(s)^{\frac{-1}{\lambda_i}} - q_i(s)x(s)^{\lambda_i} \leq (\lambda_i-1)\left(|p(s)|q_i(s)^{\frac{-1}{\lambda_i}}\right)^{\frac{\lambda_i}{\lambda_i-1}},
    \end{align*}
    which gives
    \begin{align}\label{pt19}
        \lambda_i |p(s)|x(s) - q_i(s)x(s)^{\lambda_i} &\leq (\lambda_i-1)\left(|p(s)|q_i(s)^{\frac{-1}{\lambda_i}}\right)^{\frac{\lambda_i}{\lambda_i-1}} \nonumber \\
        & \leq \gamma |p(s)|^{\frac{\lambda_i}{\lambda_i-1}}|q_i(s)|^{\frac{1}{1-\lambda_i}},\; i=l+1,\ldots,m,s\geq t_2.
    \end{align}
    Combining \eqref{pt17}, \eqref{pt18} and \eqref{pt19} yields
    \begin{align}\label{pt20}
        |p(s)|x(s) - \sum_{i=1}^k q_i(s)x(s)^{\lambda_i} \leq \gamma\sum_{i=1}^k  |p(s)|^{\frac{\lambda_i}{\lambda_i-1}}|q_i(s)|^{\frac{1}{1-\lambda_i}},\; s \geq t_2.
    \end{align}
    In short, if the condition (C) is true, from \eqref{ss1} and \eqref{pt20}, the following estimate is derived
    \begin{align}\label{m4}
        |p(s)|x(s) - \sum_{i=1}^k q_i(s)x(s)^{\lambda_i} \leq \gamma\sum_{i=1}^k  |p(s)|^{\frac{\lambda_i}{\lambda_i-1}}|q_i(s)|^{\frac{1}{1-\lambda_i}},\; s \geq t_2,
    \end{align}
    which together with \eqref{pt16} implies
    \begin{align*}
        t^{\beta-\alpha-n+1}x(t) &\leq \sum_{k=0}^{n-1}\frac{b}{k!}t^{k + \beta-\alpha-n+1} + \sum_{k=0}^{m-1}\frac{ab}{\Gamma(\alpha-\beta+1+k)}t^{ k - n+1 } \\
        &+ M_1\left((t-t_2)^{\alpha-\beta-1}t^{\beta-\alpha-n+1} + t^{-n}\right) \\
        &+ (M_2 + M_3 + M_4)\left((t-t_2)^{\alpha-1}t^{\beta-\alpha-n+1} + t^{\beta-n}\right)\nonumber \\
        & + \frac{t^{\beta-\alpha-n+1}}{\Gamma(\alpha)}\int_{T}^t (t-s)^{\alpha-1}\left(\gamma\sum_{i=1}^m  |p(s)|^{\frac{\lambda_i}{\lambda_i-1}}|q_i(s)|^{\frac{1}{1-\lambda_i}} + g(s)\right)ds,\;t>t_2.
    \end{align*}
    Therefore, there exist $c >0 $ and $t_3 > t_2$ such that
    \begin{align*}
        t^{\beta-\alpha-n+1}x(t) \leq c +  \frac{t^{\beta-\alpha-n+1}}{\Gamma(\alpha)}\int_{T}^t (t-s)^{\alpha-1}\left(\gamma\sum_{i=1}^m  |p(s)|^{\frac{\lambda_i}{\lambda_i-1}}|q_i(s)|^{\frac{1}{1-\lambda_i}} + g(s)\right)ds, \; t > t_3.
    \end{align*}
    Under the assumption (F), we conclude
    \begin{align*}
        \liminf_{t \to \infty} t^{\beta-\alpha-n+1}x(t) = - \infty,
    \end{align*}
    contrary to the counterfactual hypothesis that $x(t) > 0$ for all $t \geq t_2$. The proof is complete.
\end{proof}
\begin{example}\label{vd1}
    Consider the initial value problem \eqref{eq5}--\eqref{dkd2}, here $\alpha = 1/2$,  $\beta = 1/3$, $a =2$, $p(t) = -t$, $\lambda_1 = 1/2$, $\lambda_2 =2$, $q_1(t) = -t^{2}$, $q_2(t) =t^{3}$, $g(t) = ^C D^{1/2}_{0^+} (t^6 \sin t)$ for $t\geq 0$. This equation is rewritten as
    \begin{align}\label{bsvd1}
        ^CD^{1/2}_{0^+}x(t) + 2\,\,^CD^{1/3}_{0^+}x(t)-tx(t) -t^2\mathrm{sgn}\left(x(t)\right)|x(t)|^{1/2} + t^3 \mathrm{sgn}\left(x(t)\right)|x(t)|^2=^C D^{1/2}_{0^+} (t^6 \sin t).
    \end{align} 
    Then it is easy to check that $\lambda_1, \lambda_2$ satisfy $\textup{(C)}_2$, $p$ satisfies (D) and $q_1, q_2$ satisfy (E) for $T_1 = T_2 =1$, respectively. Next, we see $\gamma = 1$ and
    \begin{align*}
        \gamma\sum_{i=1}^m  |p(s)|^{\frac{\lambda_i}{\lambda_i-1}}|q_i(s)|^{\frac{1}{1-\lambda_i}} =  t^3 + t^5, \; t\geq 0.
    \end{align*}
    Notice that
    \begin{align*}
    g(t) = \frac{1}{\Gamma(1/2)}\int_0^t \frac{s^5 \sin s + s^6 \cos s}{(t-s)^{1/2}}ds ,\; t \geq 0,
    \end{align*}
   which implies
    \begin{align*}
        |g(t)| \leq \frac{1}{\Gamma(1/2)}\int_0^t \frac{s^5 + s^6}{(t-s)^{1/2}}ds = \frac{5!}{\Gamma(13/2)} t^{11/2} + \frac{6!}{\Gamma(15/2)}t^{13/2},\;t\geq 0.
    \end{align*}
    Hence, for $t > 1$, we obtain 
    \begin{align*}
        \int_1^t &(t-s)^{-1/2}\left(\gamma\sum_{i=1}^m  |p(s)|^{\frac{\lambda_i}{\lambda_i-1}}|q_i(s)|^{\frac{1}{1-\lambda_i}} + g(s)\right)ds < \Gamma(1/2) I^{1/2}_{0^+}\left(t^3 +t^5 +^C D^{1/2}_{0^+} (t^6 \sin t) \right) \\
        &+ 2M(t-1)^{1/2} \\
        &\hspace{2cm}<\frac{3!}{\Gamma(9/2)}t^{7/2} + \frac{6!}{\Gamma(13/2)}t^{11/2} + \Gamma(1/2) t^6 \sin t +  2Mt^{1/2},
    \end{align*}
    where $M = \max_{t \in [0,1]} (t^3 +t^5 + |g(t)|) = 2 +  \frac{5!}{\Gamma(13/2)} + \frac{6!}{\Gamma(15/2)}$.
    From this,
    \begin{align*}
        \liminf_{t\to \infty} t^{-1/2}\int_1^t &(t-s)^{-1/2}\left(\gamma\sum_{i=1}^m  |p(s)|^{\frac{\lambda_i}{\lambda_i-1}}|q_i(s)|^{\frac{1}{1-\lambda_i}} + g(s)\right)ds \\
        &\leq \liminf_{t \to \infty}(\frac{3!}{\Gamma(9/2)}t^{3} + \frac{6!}{\Gamma(13/2)}t^{5} + \Gamma(1/2) t^{11/2} \sin t +  2M) = -\infty.
    \end{align*}
 By the same arguments, we also see
    \begin{align*}
        \limsup_{t\to \infty} t^{-1/2}\int_1^t &(t-s)^{-1/2}\left(-\gamma\sum_{i=1}^m  |p(s)|^{\frac{\lambda_i}{\lambda_i-1}}|q_i(s)|^{\frac{1}{1-\lambda_i}} + g(s)\right)ds \\
        &\geq \limsup_{t \to \infty}(-\frac{3!}{\Gamma(9/2)}t^{3} - \frac{6!}{\Gamma(13/2)}t^{5} + \Gamma(1/2) t^{11/2} \sin t + 2M) = +\infty,
    \end{align*}
    and thus the condition (F) is verified. From Theorem \ref{dl3.5}, we conclude that all solutions  (if they exist globally) are oscillatory. Figure 2 depicts the orbit of the solution with the initial condition $x(0)=1$ on the interval $[0,80]$. 
    \begin{figure}
		\begin{center}
			\includegraphics[scale=.7]{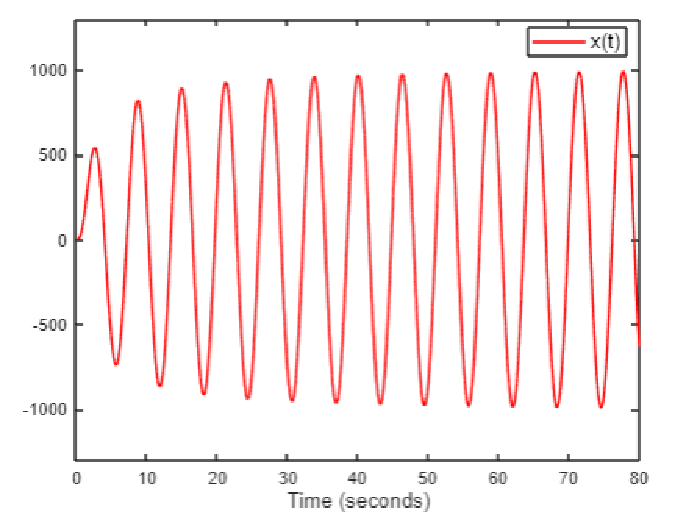}
		\end{center}
		\begin{center}
			\caption{The orbit of the solution to equation \eqref{bsvd1} with the initial condition $x_0 = 1$ on the interval $[0,80]$.}
		\end{center}
\end{figure}
\end{example}
\section{On the asymptotic behavior of oscillatory solutions of fractional differential equations}
This section is devoted to the asymptotic behavior of oscillatory solutions of some classes of fractional differential equations. We first present some comparison results for equations with two fractional derivatives to do this. Then, we consider linear equations. Finally, we deal with the nonlinear case by combining Theorem \ref{dl1}, the approach as in \cite{Thinh}, and the proposed comparison principles.
\subsection{Comparison results for multi-order fractional differential equations}
For given a parameter $T>0$, consider the equation
\begin{align}
   ^C D^{\alpha}_{0^+}x(t) +a\,  ^C D^{\beta}_{0^+}x(t)& = f(t,x(t)),\;t\in (0,T],\label{ss1_T}\\
   x(0)=x_0\in\mathbb R,\label{ss2_T}
\end{align}
where $0 < \beta < \alpha \leq 1$, $a$ is a positive constant, $f:[0,T]\times \mathbb R\rightarrow \mathbb R$ is a continuous function and Lipschitz continuous with respect to the second variable. We will present some comparison results of the solutions of the system \eqref{ss1_T}--\eqref{ss2_T}.
\begin{lemma}\label{bdss1}
Let $x(\cdot)$ be the unique solution of the initial value problem \eqref{ss1_T}--\eqref{ss2_T}. Suppose that $y : [0, T] \to \mathbb R$ is continuously Caputo $\alpha$-differentiable satisfying
    \begin{align*}
        ^C D^{\alpha}_{0^+}y(t) +a\,  ^C D^{\beta}_{0^+}y(t) &\leq f(t,y(t)) , \; t \in (0,T], \\
        y(0) = y_0 < x_0, 
    \end{align*}
     then $y(t) < x(t)$ for all $t \in [0,T]$.
\end{lemma}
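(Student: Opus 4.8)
The plan is to study $z:=x-y$ and to prove $z(t)>0$ throughout $[0,T]$ by analysing the sign of its fractional derivatives at a hypothetical first contact point. First I would record the regularity I will need: by linearity $z$ inherits the continuous Caputo $\alpha$-differentiability of $x$ and $y$, with $z(0)=x_0-y_0>0$ and $z$ continuous. Writing $z=z(0)+I^{\alpha}_{0^+}({}^{C}D^{\alpha}_{0^+}z)$ via Lemma \ref{tdtp}, then using $I^{\alpha}_{0^+}=I^{\beta}_{0^+}I^{\alpha-\beta}_{0^+}$ from Lemma \ref{nhtp} and ${}^{C}D^{\beta}_{0^+}I^{\beta}_{0^+}=\mathrm{id}$ on continuous functions from Lemma \ref{tddh}, I obtain ${}^{C}D^{\beta}_{0^+}z=I^{\alpha-\beta}_{0^+}({}^{C}D^{\alpha}_{0^+}z)$, which is continuous. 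Hence $z$ is continuously Caputo $\beta$-differentiable as well, and both ${}^{C}D^{\alpha}_{0^+}z$ and ${}^{C}D^{\beta}_{0^+}z$ admit the pointwise representation of Lemma \ref{bdv} with $n=1$.

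Suppose, for contradiction, that $z$ does not remain positive. Since $z$ is continuous and $z(0)>0$, the set $\{t\in[0,T]:z(t)\le 0\}$ is nonempty and closed, so it has a least element $t^{*}$; necessarily $t^{*}\in(0,T]$, $z(t^{*})=0$, and $z(s)>0$ for every $s\in[0,t^{*})$. Subtracting the differential inequality for $y$ from the equation for $x$ and evaluating at $t^{*}$, where $x(t^{*})=y(t^{*})$, gives
\[
{}^{C}D^{\alpha}_{0^+}z(t^{*})+a\,{}^{C}D^{\beta}_{0^+}z(t^{*})\ \ge\ f\bigl(t^{*},x(t^{*})\bigr)-f\bigl(t^{*},y(t^{*})\bigr)=0 .
\]

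The crux is to contradict this by showing the left-hand side is strictly negative. Applying Lemma \ref{bdv} (with $n=1$) to $z$ at $t^{*}$ for the order $\beta$, the boundary term is $\frac{1}{\Gamma(1-\beta)}\frac{z(t^{*})-z(0)}{(t^{*})^{\beta}}=-\frac{z(0)}{\Gamma(1-\beta)(t^{*})^{\beta}}<0$, strictly negative because $z(0)>0$, while the integral term equals $-\beta\int_{0}^{t^{*}}\frac{z(s)}{(t^{*}-s)^{\beta+1}}\,ds\le 0$ since $z\ge 0$ on $[0,t^{*}]$; hence ${}^{C}D^{\beta}_{0^+}z(t^{*})<0$. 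The identical computation gives ${}^{C}D^{\alpha}_{0^+}z(t^{*})\le 0$ when $\alpha<1$, and when $\alpha=1$ the same conclusion ${}^{C}D^{1}_{0^+}z(t^{*})=z'(t^{*})\le 0$ holds because $z$ attains a minimum at $t^{*}$. As $a>0$, I conclude ${}^{C}D^{\alpha}_{0^+}z(t^{*})+a\,{}^{C}D^{\beta}_{0^+}z(t^{*})<0$, contradicting the displayed inequality. Thus no contact point exists and $y(t)<x(t)$ on $[0,T]$.

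I expect the main obstacle to be precisely this sign analysis at $t^{*}$: the argument depends on having the \emph{pointwise} representation of Lemma \ref{bdv}, which cleanly splits each Caputo derivative into a boundary term and a sign-definite integral, and on squeezing out \emph{strict} negativity. That strictness---what allows the contradiction between ``$\ge 0$'' and ``$<0$'' to close---comes entirely from the strict initial gap $z(0)=x_0-y_0>0$ together with $a>0$; with only $y_0\le x_0$ the method would stall at a non-strict inequality and would require a separate perturbation or approximation step.
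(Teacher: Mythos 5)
Your proof is correct and follows essentially the same route as the paper's: contradiction at the first contact point $t^{*}$ of $z=x-y$, the pointwise representation of Lemma \ref{bdv} to get $^{C}D^{\alpha}_{0^+}z(t^{*})\le 0$ (with the separate one-sided-derivative argument for $\alpha=1$) and the strict inequality $^{C}D^{\beta}_{0^+}z(t^{*})<0$ from the boundary term $-z(0)/\bigl(\Gamma(1-\beta)(t^{*})^{\beta}\bigr)$, then $a>0$ to close the contradiction. The only cosmetic difference is that you justify the continuous Caputo $\beta$-differentiability of $z$ via the identity $^{C}D^{\beta}_{0^+}z=I^{\alpha-\beta}_{0^+}\bigl({}^{C}D^{\alpha}_{0^+}z\bigr)$ built from Lemmas \ref{tdtp}, \ref{nhtp}, \ref{tddh}, whereas the paper cites an external result of Cong for the same fact.
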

    \begin{proof}
    Suppose by contradiction that there is a $t\in (0,T]$ such that $y(t) = x(t)$.
       Due to $y(0) = y_0 < x_0 = x(0)$ and $x,y \in C[0,T]$, there exists a $t_1 \in (0,T]$ such that
       \begin{align}
           y(t)& < x(t),\;\forall t\in [0,t_1),\label{ss3_T}\\
           y(t_1)&=x(t_1)\label{ss4_T}
       \end{align}
       Define
        \begin{align*}
            z(t): = x(t) -y(t),\; t\in [0, T].
        \end{align*}
        Then $z(t) > 0$ for all $t \in [0, t_1)$ and $z(t_1) = 0$. Notice that for $0<\alpha<1$,  by virtue of Lemma \ref{bdv}, we have
        \begin{align}\label{3.30}
            ^C D^\alpha_{0^+}z(t_1) &= \frac{1}{\Gamma(1-\alpha)}\left(\frac{z(t_1)-z(0)}{t_1^{\alpha}}\right) + \alpha \int_0^{t_1}\frac{z(t_1)-z(s)}{(t_1-s)^{\alpha+1}}ds\nonumber\\
            &=\frac{1}{\Gamma(1-\alpha)}\left(\frac{-z(0)}{t_1^{\alpha}}\right) + \alpha \int_0^{t_1}\frac{-z(s)}{(t_1-s)^{\alpha+1}}ds < 0.
        \end{align}
        In the case $\alpha =1$, then 
        \begin{align}
            z'(t_1)& = \lim_{h \to 0^-}\frac{z(t_1+h) -z(t_1)}{h}\notag\\
            &=\lim_{h \to 0^-}\frac{z(t_1+h)}{h}\notag\\
            &\leq 0\label{3.31}.
        \end{align}
        Thus, from \eqref{3.30} and \eqref{3.31}, we claim
        \begin{align}\label{3.32}
            ^C D^\alpha_{0^+}z(t_1)  \leq 0.
        \end{align}
        Furthermore, due to $z(\cdot)$ is continuously Caputo $\alpha$-differentiable on $[0, T]$ and $0 < \beta < \alpha \leq 1$, it follows from \cite[Theorem 3.8 (i)]{Cong} that this function is also continuously Caputo $\beta$-differentiable and thus, by the same arguments as shown above, $^C D^\beta_{0^+}z(t_1) < 0$. This together with \eqref{3.32} implies
        \begin{align*}
            ^C D^\alpha_{0^+}z(t_1) + a\,^C D^\beta_{0^+}z(t_1) < 0,
        \end{align*}
        that is,
        \begin{equation}\label{ss5_T}
            ^C D^\alpha_{0^+}x(t_1) + a\,^C D^\beta_{0^+}x(t_1) < ^C D^\alpha_{0^+}y(t_1) + a\,^C D^\beta_{0^+}y(t_1).
        \end{equation}
        However, 
        \begin{equation*}
            ^C D^\alpha_{0^+}x(t_1) + a\,^C D^\beta_{0^+}x(t_1) =f(t_1,x(t_1)) = f(t_1,y(t_1)) \geq ^C D^\alpha_{0^+}y(t_1) + a\,^C D^\beta_{0^+}y(t_1),
        \end{equation*}
        contrary to \eqref{ss5_T}. Therefore, $y(t) < x(t)$ for all $t \in [0,T]$.
    \end{proof}
    \begin{theorem}\label{ssnh}
        Let $x(\cdot)$ be the unique solution of the initial value problem \eqref{ss1_T}--\eqref{ss2_T}. Assume that
    $y : [0, T] \to \mathbb R$ is continuously Caputo $\alpha$-differentiable satisfying
    \begin{align*}
        ^C D^{\alpha}_{0^+}y(t) +a\,  ^C D^{\beta}_{0^+}y(t) &\leq f(t,y(t)) , \; t \in (0,T], \\
        y(0) = y_0 \leq  x_0, 
    \end{align*}
     then $y(t) \leq x(t)$ for all $t \in [0,T]$.
    \end{theorem}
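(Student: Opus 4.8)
The plan is to upgrade the strict comparison principle of Lemma \ref{bdss1} to the non-strict one by a vanishing-perturbation argument at the level of the initial datum. For each $\eps > 0$, let $x_\eps(\cdot)$ denote the unique solution on $[0,T]$ of
$$^C D^{\alpha}_{0^+}x_\eps(t) + a\,^C D^{\beta}_{0^+}x_\eps(t) = f(t,x_\eps(t)), \qquad x_\eps(0) = x_0 + \eps,$$
whose existence and uniqueness are guaranteed by the Lipschitz hypothesis on $f$ (as in Theorem \ref{dlttdn}). Since $y_0 \leq x_0 < x_0 + \eps$, the function $x_\eps$ and the subsolution $y$ satisfy precisely the hypotheses of Lemma \ref{bdss1} with the strict initial inequality $y(0) < x_\eps(0)$. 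Hence $y(t) < x_\eps(t)$ for every $t \in [0,T]$ and every $\eps > 0$.

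It then remains to let $\eps \to 0^+$, which requires continuous dependence of the solution on the initial value. First I would convert the two-term equation into its equivalent integral form exactly as in the proof of Theorem \ref{dl1} via Lemma \ref{tdtp}; here $0 < \beta < \alpha \leq 1$ forces $n = m = 1$, so that
$$x_\eps(t) = (x_0 + \eps)\Bigl(1 + \tfrac{a\,t^{\alpha-\beta}}{\Gamma(\alpha-\beta+1)}\Bigr) - a\,I^{\alpha-\beta}_{0^+}x_\eps(t) + I^\alpha_{0^+}f(\cdot,x_\eps(\cdot))(t).$$
Subtracting the corresponding identity for $x$ and taking absolute values, the Lipschitz continuity of $f$ together with the local boundedness of the weakly singular kernels $t^{\alpha-1}$ and $t^{\alpha-\beta-1}$ on $[0,T]$ lets a (fractional) Gronwall inequality produce an estimate of the form $|x_\eps(t) - x(t)| \leq C\eps$, uniformly in $t \in [0,T]$. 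In particular $x_\eps \to x$ uniformly on $[0,T]$ as $\eps \to 0^+$.

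Combining the two steps, for each fixed $t \in [0,T]$ we have $y(t) < x_\eps(t)$ for all $\eps > 0$ while $x_\eps(t) \to x(t)$, and passing to the limit yields $y(t) \leq x(t)$, as asserted.

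I expect the only genuine obstacle to lie in the continuous-dependence estimate of the second paragraph: one must control the memory term $a\,I^{\alpha-\beta}_{0^+}(x_\eps - x)$ and the Lipschitz term $I^\alpha_{0^+}\bigl(f(\cdot,x_\eps) - f(\cdot,x)\bigr)$ simultaneously, which is exactly where a Gronwall-type inequality adapted to two distinct weakly singular convolution kernels is needed. This is standard for Lipschitz fractional equations and may be invoked from the existence--uniqueness framework of \cite{Cong}, so no essentially new difficulty arises beyond bookkeeping.
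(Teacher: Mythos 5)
Your proposal is correct, and its first half coincides exactly with the paper's: both proofs perturb the initial datum upward (you by $\varepsilon>0$, the paper by $1/m$), invoke the strict comparison result of Lemma \ref{bdss1} to get $y(t)<x_\varepsilon(t)$ on $[0,T]$, and then pass to the limit. The divergence is in how the limit is taken. You prove continuous dependence on the initial value: subtracting the integral representations of $x_\varepsilon$ and $x$ and applying a Gronwall inequality for weakly singular kernels yields $|x_\varepsilon(t)-x(t)|\le C\varepsilon$ uniformly on $[0,T]$. (This works: since $0<\alpha-\beta<\alpha$, both kernels $(t-s)^{\alpha-1}$ and $(t-s)^{\alpha-\beta-1}$ are dominated on $[0,T]$ by a constant multiple of $(t-s)^{\alpha-\beta-1}$, so a single Henry-type Gronwall lemma suffices; the needed uniform Lipschitz constant exists because $L(\cdot)$ is continuous on the compact interval $[0,T]$.) The paper instead avoids any Gronwall machinery: it observes that the family $\{x_m\}$ is monotone decreasing and uniformly bounded, proves equicontinuity directly from the integral representation, extracts a uniformly convergent subsequence by Arzel\`a--Ascoli, passes to the limit in the integral equation, and identifies the limit with $x$ by uniqueness (Theorem \ref{dlttdn}). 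Your route is more quantitative and shorter on paper, but it leans on a singular-kernel Gronwall inequality that the paper never states and would have to be imported or proved; the paper's compactness route is softer and entirely self-contained given the equicontinuity estimate, at the cost of being non-quantitative. Both are complete and correct proofs of the statement.
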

    \begin{proof}
    For each $m \in \mathbb N$, by Theorem \ref{dlttdn}, the following initial value problem 
    \begin{align}
        ^C D^{\alpha}_{0^+}x(t) +a\,  ^C D^{\beta}_{0^+}x(t) &= f(t,x(t)), \; t \in (0,T],\label{ss6_T} \\
        x(0)& = x_0 + \frac{1}{m},\label{ss7_T}
    \end{align}
    has a unique solution $x_m :[0,T] \to \mathbb R$. Because $y(0) =y_0 \leq x_0 < x_0 + \frac{1}{m} =x_m(0)$ for all $m \in \mathbb N$, according to Lemma \ref{bdss1}, we see
    \begin{align}\label{3.33}
        y(t) < x_m(t)\leq x_1(t),\;\forall t \in [0,T].
    \end{align}
    In addition, for $n,m\in \mathbb N$ and $n>m$, then
    \begin{align*}
        x_n(t)<x_m(t),\;\forall t\in [0,T].
    \end{align*}
    Thus, the sequence $\{x_m(\cdot)\}_{m\in\mathbb N}$ is uniformly bounded on $[0,T]$ and for each $t\in[0,T]$, the sequence $\{x_m(t)\}_{m\in\mathbb N}$ is strictly decreasing. We next show that this sequence is equicontinuous. Let $C_1 > 0$ such that $|x_m(t)| \leq C_1$ for all $t \in [0,T]$ and $m \in \mathbb N$ and take $C_2:= \max_{[0,T]\times[-C_1,C_1]}|f(t,x)|$. For $0 \leq t_1 < t_2 \leq T$ and $m \in \mathbb N$, based on \cite[Lemma 2.1]{Thinh}, we obtain the following estimates 
    \begin{align}
        |x_m(t_2)-x_m(t_1)| =& \;\left| \frac{a}{\Gamma(\alpha-\beta)}\left(\int_0^{t_1}(t_1-s)^{\alpha-\beta-1}x_m(s)ds -\int_0^{t_2}(t_2-s)^{\alpha-\beta-1}x_m(s)ds \right) \right.\notag \\
        &+ \left. \frac{1}{\Gamma(\alpha)}\left(\int_0^{t_2} (t_2-s)^{\alpha-1}f(s,x_m(s)ds -\int_0^{t_1} (t_1-s)^{\alpha-1}f(s,x_m(s)ds \right) \right|\notag \\
        \leq &\;\frac{a}{\Gamma(\alpha-\beta)}\int_0^{t_1}\left((t_1-s)^{\alpha-\beta-1} -(t_2-s)^{\alpha-\beta-1}\right)|x_m(s)|ds\notag \\
        &+\frac{a}{\Gamma(\alpha-\beta)}\int_{t_1}^{t_2}(t_2-s)^{\alpha-\beta-1}|x_m(s)|ds \notag\\
        &+\frac{1}{\Gamma(\alpha)}\int_0^{t_1}\left((t_1-s)^{\alpha-1} -(t_2-s)^{\alpha-1}\right)|f(s,x_m(s)|ds\notag\\
        &+\frac{1}{\Gamma(\alpha)}\int_{t_1}^{t_2}(t_2-s)^{\alpha-1}|f(s,x_m(s)|ds\notag\\
        \leq &\;\frac{a C_{1}}{\Gamma(\alpha-\beta+1)}(t_1^{\alpha-\beta} + (t_2 -t_1)^{\alpha-\beta} -t_2^{\alpha-\beta}) + \frac{a C_{1}}{\Gamma(\alpha-\beta+1)}(t_2-t_1)^{\alpha-\beta}\notag\\
        &+\frac{ C_{2}}{\Gamma(\alpha+1)}(t_1^{\alpha} + (t_2 -t_1)^{\alpha} -t_2^{\alpha}) + \frac{C_{2}}{\Gamma(\alpha+1)}(t_2-t_1)^{\alpha}\notag\\
        \leq&\; \frac{2a C_{1}}{\Gamma(\alpha-\beta+1)}(t_2-t_1)^{\alpha-\beta} + \frac{ 2C_{2}}{\Gamma(\alpha+1)}(t_2-t_1)^{\alpha}.\label{compare_1}
    \end{align}
   Let $\epsilon > 0$ be arbitrarily small. Choosing
    \begin{align*}
       0< \delta < \min \left\{\left(\frac{\epsilon}{2}\frac{\Gamma(\alpha-\beta+1)}{2aC_1}\right)^{\frac{1}{\alpha-\beta}}, \left(\frac{\epsilon}{2}\frac{\Gamma(\alpha+1)}{2C_2}\right)^{\frac{1}{\alpha}}\right\},
    \end{align*}
    from \eqref{compare_1}, we get
    \begin{align*}
        |x_m(t_2)-x_m(t_1)| < \frac{\epsilon}{2} + \frac{\epsilon}{2} = \epsilon
    \end{align*}
    for all $t_1,t_2\in [0,T]$ with $|t_1-t_2|<\delta$.  This means that $\{x_m(\cdot)\}_{m \in \mathbb N}$ is equicontinuous on $[0, T]$. By Arzel$\grave{a}$--Ascoli theorem, there is a subsequence $\{x_{m_k}(\cdot)\}_{k\in \mathbb N}$ of $\{x_m(\cdot)\}_{m\in\mathbb N}$ which uniformly converges to a function $x^*(\cdot)$ on $[0, T]$. In particular, $y(t)\leq x^*(t)$ for all $t\in [0,T]$. Notice that, for each $k\in \mathbb N$ and $t\in [0,T]$, we have (by \cite[Lemma 2.1]{Thinh})  
    \begin{align*}
        x_{m_k}(t) = x_0 + \frac{1}{m_k} &+ \frac{a(x_0+\frac{1}{m_k})}{\Gamma(1+\alpha-\beta)}t^{\alpha-\beta}-\frac{a}{\Gamma(\alpha-\beta)}\int_0^t(t-s)^{\alpha-\beta-1}x_{m_k}(s)ds\\
        &+ \frac{1}{\Gamma(\alpha)}\int_0^t (t-s)^{\alpha-1}f(s,x_{m_k}(s))ds.
    \end{align*}
    Letting $k \to \infty$, then 
\begin{align}\label{ss8_T}
x^*(t)&= x_0 + \frac{ax_0}{\Gamma(1+\alpha-\beta)}t^{\alpha-\beta}-\frac{a}{\Gamma(\alpha-\beta)}\int_0^t(t-s)^{\alpha-\beta-1}x^*(s)ds\notag\\
&\hspace{2cm}+\frac{1}{\Gamma(\alpha)}\int_0^t (t-s)^{\alpha-1}f(s,x^*(s))ds,\;t\in [0,T].
\end{align}
    On the other hand, due to Theorem \ref{dlttdn}, the original system \eqref{ss1_T}--\eqref{ss2_T} has a unique solution $x(\cdot)$, which is also written in the form \eqref{ss8_T} (by \cite[Lemma 2.1]{Thinh}). This implies $x^* \equiv x$ on $[0, T]$ and thus $y(t)\leq x(t)$ for all $t\in [0,T]$.
    \end{proof}
     \begin{corollary}\label{sslh}
         Let $x(\cdot)$ be the unique solution of the initial value problem \eqref{ss1_T}--\eqref{ss2_T}. Assume that
    $y : [0, T] \to \mathbb R$ is continuously  Caputo $\alpha$-differentiable satisfying
    \begin{align*}
        ^C D^{\alpha}_{0^+}y(t) +a\,  ^C D^{\beta}_{0^+}y(t) &\geq f(t,y(t)) , \; t \in (0,T], \\
        y(0) = y_0 \geq  x_0, 
    \end{align*}
     then $y(t) \geq x(t)$ for all $t \in [0,T]$.
     \end{corollary}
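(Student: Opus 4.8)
The plan is to reduce the statement to Theorem~\ref{ssnh} by a reflection in the dependent variable. First I would observe that the Caputo operators $^C D^{\alpha}_{0^+}$ and $^C D^{\beta}_{0^+}$ are linear, so that negating a function negates its fractional derivatives. Accordingly I set $\tilde x := -x$, $\tilde y := -y$, and introduce the reflected nonlinearity
\begin{align*}
    \tilde f(t,u) := -f(t,-u), \qquad (t,u) \in [0,T]\times\mathbb R .
\end{align*}
Then $\tilde f$ is continuous and, for every $t$ and all $u,v\in\mathbb R$, $|\tilde f(t,u)-\tilde f(t,v)| = |f(t,-v)-f(t,-u)| \le L(t)|u-v|$, so $\tilde f$ is Lipschitz continuous in its second variable with the same $L$.

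Next I would check that the reflected data fit exactly the hypotheses of Theorem~\ref{ssnh} with $\tilde f$ in place of $f$. By linearity, $\tilde x$ satisfies $^C D^{\alpha}_{0^+}\tilde x + a\,^C D^{\beta}_{0^+}\tilde x = -f(t,x) = \tilde f(t,\tilde x)$ on $(0,T]$ with $\tilde x(0) = -x_0$; since $\tilde f$ is Lipschitz, Theorem~\ref{dlttdn} guarantees this initial value problem has a unique solution, so $\tilde x$ \emph{is} that unique solution. Likewise $\tilde y$ is continuously Caputo $\alpha$-differentiable (negation preserves this regularity), and negating the assumed inequality $^C D^{\alpha}_{0^+}y + a\,^C D^{\beta}_{0^+}y \ge f(t,y)$ yields $^C D^{\alpha}_{0^+}\tilde y + a\,^C D^{\beta}_{0^+}\tilde y \le -f(t,y) = \tilde f(t,\tilde y)$ on $(0,T]$, while $\tilde y(0) = -y_0 \le -x_0 = \tilde x(0)$ because $y_0 \ge x_0$. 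Applying Theorem~\ref{ssnh} to the pair $(\tilde x,\tilde y)$ and the nonlinearity $\tilde f$ then gives $\tilde y(t) \le \tilde x(t)$ for all $t\in[0,T]$, which is precisely $y(t)\ge x(t)$ on $[0,T]$.

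I do not expect any genuine obstacle: the only points requiring care are that the reflection preserves both the Lipschitz continuity of the nonlinearity (needed so that Theorem~\ref{dlttdn}, and hence Theorem~\ref{ssnh}, apply verbatim) and the continuous Caputo $\alpha$-differentiability of the test function, and both are immediate from linearity of the fractional operators. Alternatively, one could avoid the reduction and repeat the direct argument of Lemma~\ref{bdss1} applied to $z := y-x$: at a first point $t_1$ where the nonnegative quantity $z$ returns to zero from strictly positive values, Lemma~\ref{bdv} forces $^C D^{\alpha}_{0^+}z(t_1) < 0$ and $^C D^{\beta}_{0^+}z(t_1) < 0$, contradicting $^C D^{\alpha}_{0^+}z(t_1) + a\,^C D^{\beta}_{0^+}z(t_1) \ge f(t_1,y(t_1)) - f(t_1,x(t_1)) = 0$; however, the reflection argument is shorter since it reuses Theorem~\ref{ssnh} and its equicontinuity estimate rather than redoing them.
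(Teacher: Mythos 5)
Your proposal is correct and follows essentially the same route as the paper: the paper's proof also sets $u=-x$, $v=-y$, introduces the reflected nonlinearity $g(t,x)=-f(t,-x)$, verifies that the reflected data satisfy the hypotheses of Theorem~\ref{ssnh}, and concludes by applying that theorem. Your added remarks (that the reflection preserves the Lipschitz constant, and that Theorem~\ref{dlttdn} identifies $-x$ as the unique solution of the reflected problem) only make explicit what the paper leaves implicit.
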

     \begin{proof}
         Put $u(t) = -x(t)$, $v(t) =-y(t)$, $t \in [0, T]$ and define $g(t,x) = -f(t,-x)$, $t \in [0,T], x\in \mathbb R$. Then, the function $g(\cdot,\cdot)$ is continuous and is Lipschitz continuous to the second variable. It is easy to check that
         \begin{align*}
             ^C D^{\alpha}_{0^+}u(t) +a\,  ^C D^{\beta}_{0^+}u(t) &= -^C D^{\alpha}_{0^+}x(t) -a\,  ^C D^{\beta}_{0^+}x(t) \\
             &= -f(t,x(t)) = g(t,-x(t))=g(t,u(t)),\;t\in (0,T],\\
             u(0)&= -x_0. 
         \end{align*}
        Similarly,  $v(\cdot)$ is continuously Caputo $\alpha$-differentiable and satisfies 
        \begin{align*}
        ^C D^{\alpha}_{0^+}v(t) +a\,  ^C D^{\beta}_{0^+}v(t) &\leq g(t,v(t)) ,\; t \in (0,T], \\
        y(0) = -y_0 \leq -x_0.
        \end{align*}
        On account of Theorem \ref{ssnh} above, we have $v(t) \leq u(t),\; t\in [0,T]$, that is, $y(t) \geq x(t),\; t \in [0,T]$.
     \end{proof}
    \subsection{Asymptotic behavior of oscillatory solutions of fractional-order linear equations}
    Consider the linear equation
    \begin{align}\label{bt3}
        ^C D^\alpha_{0^+}x(t) + a \, ^C D^\beta_{0^+}x(t) = -bx(t) +  g(t), \quad t> 0
    \end{align}
    with the initial value
    \begin{align}\label{dkbt3}
        x(0) = x_0,
    \end{align}
    where $0 < \beta < \alpha \leq 1$, $a,b$ are positive real constants, $g : [0, \infty) \to \mathbb R$ is a continuous function.
    \begin{theorem}\label{dl4.4}
        Suppose that $g(\cdot)$ satisfies the following assumption.
    \end{theorem}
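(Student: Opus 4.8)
The plan is to reduce the asymptotic question to the two-sided comparison principle established in Theorem~\ref{ssnh} and Corollary~\ref{sslh}, after first recasting \eqref{bt3}--\eqref{dkbt3} as an integral equation. Since $f(t,x)=-bx+g(t)$ is globally Lipschitz in its second argument, Theorem~\ref{dlttdn} furnishes a unique global solution; applying $I^\alpha_{0^+}$ to \eqref{bt3} and invoking Lemmas~\ref{tdtp} and~\ref{nhtp} gives, exactly as in the representation \eqref{ss8_T},
\begin{align*}
x(t) = x_0 + \frac{a x_0}{\Gamma(1+\alpha-\beta)}t^{\alpha-\beta} - \frac{a}{\Gamma(\alpha-\beta)}\int_0^t (t-s)^{\alpha-\beta-1}x(s)\,ds - \frac{b}{\Gamma(\alpha)}\int_0^t (t-s)^{\alpha-1}x(s)\,ds + \frac{1}{\Gamma(\alpha)}\int_0^t (t-s)^{\alpha-1}g(s)\,ds,
\end{align*}
which serves as the analytic backbone of the estimate.

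Next I would manufacture explicit barriers from the hypothesis on $g$. Replacing $g$ by a majorant $G$ and a minorant supplied by the assumption, I would let $\overline y$ and $\underline y$ solve the comparison problems ${}^C D^\alpha_{0^+}y + a\,{}^C D^\beta_{0^+}y + by = \pm G$ with initial data chosen to straddle $x_0$. A super-solution check shows ${}^C D^\alpha_{0^+}\overline y + a\,{}^C D^\beta_{0^+}\overline y \ge f(t,\overline y)$ and dually for $\underline y$, so Corollary~\ref{sslh} and Theorem~\ref{ssnh} sandwich the true solution, $\underline y(t)\le x(t)\le \overline y(t)$, on every $[0,T]$ and hence on $[0,\infty)$. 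The claimed asymptotics for $x$ then follow once they are verified for the two constant-forced barriers, whose long-time behavior can be read off the integral equation directly; if the hypothesis is a decay condition such as $g(t)\to 0$, one iterates this on shifted intervals $[T_\eps,\infty)$ and lets the forcing bound tend to zero.

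The step I expect to be the main obstacle is quantifying the decay of the homogeneous resolvent and of the forcing convolution $\int_0^t (t-s)^{\alpha-1}g(s)\,ds$. Note that the representation above contains the \emph{growing} term $t^{\alpha-\beta}$, so the decay is not transparent and must emerge from cancellation against the two memory integrals. Since $b>0$, the symbol $s^\alpha+as^\beta+b$ has no nonnegative real zero, so the associated kernel decays; but pinning down the precise algebraic rate calls for either a Laplace-transform/Tauberian analysis of the basic solution or a Gr\"onwall-type iteration on the integral equation, exploiting that $\alpha-\beta-1<0$ and $\alpha-1\le 0$ make both kernels integrable. For the forcing contribution I would split the integral at a large time, control the tail with the hypothesis on $g$, and absorb the compact part into the decaying homogeneous bound, in the spirit of the estimates \eqref{p2}--\eqref{p3}.

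Finally, passing to the limit in the sandwich $\underline y\le x\le \overline y$, together with the established asymptotics of the barriers, delivers the stated behavior of $x(t)$; in the oscillatory regime the same two-sided bounds supply the envelope within which the zeros of $x$ accumulate, so the asymptotic estimate applies uniformly along the zero sequence guaranteed by the oscillation hypothesis.
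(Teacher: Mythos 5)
Your sandwich strategy via Theorem \ref{ssnh} and Corollary \ref{sslh} is legitimate in principle, but it does not close the argument, and it misses the two steps that carry essentially all the content of the paper's proof. First, the hypothesis (B)'' does not hand you a decay bound on $g$: it presents $g(t)={}^C D^\alpha_{0^+}(t+1)^\sigma h(t)$ with $h$ bounded and $h'(t)=O(t^\eta)$. The paper's main computation is to extract from this, via the representation of Lemma \ref{dhc} and Beta-function estimates on the split integral, that $g(t)=O(t^{\eta+\sigma-\alpha+1})$ as $t\to\infty$; note that the exponents $\alpha-\beta<\sigma<\alpha$ and $-1<\eta<\alpha-\sigma-1$ are exactly what make this a \emph{negative} power. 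Your proposal hedges (``if the hypothesis is a decay condition such as $g(t)\to 0$'') and never performs this step, yet without it neither your barriers nor any resolvent estimate has anything to work with. Second, your reduction is circular where it matters most: the barriers $\overline y,\underline y$ solve the same linear two-term equation with forcing $\pm G$, so you have only traded the original asymptotic question for an identical one. You correctly identify that the decay of the kernel associated with $s^\alpha+as^\beta+b$ is the crux, but you leave it as an acknowledged obstacle; the paper disposes of it by invoking \cite[Theorem 5.2]{Thinh}, which is precisely the Laplace-transform/spectral analysis of $G^{1}_{\alpha,\beta;a,b}$ yielding the rate $O(t^{\max\{-\beta,\,\eta+\sigma-\alpha+1\}})$. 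With that result in hand the comparison machinery is unnecessary for the linear equation, since the solution formula applies directly.

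There is also a gap on the oscillation claim. The theorem asserts oscillation as a \emph{conclusion}, obtained by checking that (B)'' implies condition (B) of Theorem \ref{dl1} (this is the same computation as in Corollary \ref{hq1}, using $I^\alpha_{0^+}g=(t+1)^\sigma h(t)$ and $\sigma>\alpha-\beta$ together with $n=1$). Your proposal instead treats oscillation as a hypothesis (``the zero sequence guaranteed by the oscillation hypothesis''), so this part of the statement is not proved at all in your outline.
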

    \begin{itemize}
        \item [(B)''] For $t\in [0,\infty)$, $g(t): = ^C D^\alpha_{0^+} (t+1)^{\sigma}h(t)$, here $\alpha-\beta < \sigma < \alpha \leq 1$ and $h(\cdot)$ is  continuously differentiable and bounded on $[0,\infty)$ such that $\limsup_{t \to \infty}h(t) > 0$, $\liminf_{t \to \infty}h(t) < 0$. In addition, assume that $h'(t) = O(t^{\eta})$ as $t \to \infty$, here $-1< \eta < \alpha-\sigma-1$.
    \end{itemize}
    Then, an arbitrary nontrivial solution $x(\cdot)$ of the initial value problem \eqref{bt3}--\eqref{dkbt3} is oscillatory. Furthermore, its asymptotic behavior has the form $O(t^{\max\{-\beta, \eta + \sigma-\alpha+1\}})$ at infinity. 
    \begin{proof}
    It is easy to check that for a function $g(\cdot)$ satisfying the condition $(\textup{B})$'', it also satisfies the assumption (B) in Theorem  \ref{dl1}. Thus, all non-trivial solutions of the system \eqref{bt3}--\eqref{dkbt3} are oscillatory. Next, we show that $g(t) = O(t^{\eta + \sigma-\alpha+1})$ as $t \to \infty$. Indeed, for $t>0$, we have
        \begin{align}\label{4.3.38}
            g(t) &= ^C D^\alpha_{0^+} (t+1)^\sigma h(t) =\frac{1}{\Gamma(1-\alpha)}\int_0^t \frac{(s+1)^{\sigma-1}h(s) + (s+1)^{\sigma}h'(s)}{(t-s)^\alpha}ds.  
        \end{align}
        Notice that $h(\cdot)$ is bounded on $[0,\infty)$, there is a $M_1>0$ such that
        \begin{align}\label{4.3.39}
            \left|\frac{1}{\Gamma(1-\alpha)}\int_0^t \frac{(s+1)^{\sigma-1}h(s)}{(t-s)^\alpha}ds \right| &\leq \frac{M_1}{\Gamma(1-\alpha)}\int_0^t s^{\sigma-1}(t-s)^{-\alpha}ds \nonumber\\
            &\leq \frac{M_1}{\Gamma(1-\alpha)}t^{\sigma-\alpha}B(\sigma,1-\alpha)\nonumber\\
            &\leq M_2t^{\sigma-\alpha},
        \end{align}
        where $M_2:=M_1 \frac{B(\sigma,1-\sigma)}{\Gamma(1-\alpha)}$. On the other hand, due to $h'(t) = O(t^{\eta})$ as $t \to \infty$, we can find a $M_3 > 0$ and $T > 1$ large enough so that $|h'(t)| \leq M_3t^{\eta}$ for all $t \geq T$. From this, for any $t > T$, we obtain 
        \begin{align}\label{4.3.40}
            \left| \frac{1}{\Gamma(1-\alpha)}\int_0^t \frac{ (s+1)^{\sigma}h'(s)}{(t-s)^\alpha}ds \right| \leq & \; \frac{1}{\Gamma(1-\alpha)}\int_0^T \frac{ (s+1)^{\sigma}|h'(s)|}{(t-s)^\alpha}ds \nonumber\\
            &+ \frac{1}{\Gamma(1-\alpha)}\int_T^t \frac{ (s+1)^{\sigma}|h'(s)|}{(t-s)^\alpha}ds\nonumber\\
            \leq &\;\frac{1}{\Gamma(1-\alpha)}\frac{(T+1)^\sigma}{(t-T)^\alpha}\int_0^T|h'(s)|ds \nonumber \\
            &+\frac{M_3(t+1)^\sigma}{\Gamma(1-\alpha)}\int_T^t s^{\eta}(t-s)^{-\alpha}ds\nonumber \\
            \leq&\; M_4(t-T)^{-\alpha} + \frac{M_3(t+1)^\sigma}{\Gamma(1-\alpha)}\int_0^t s^{\eta}(t-s)^{-\alpha}ds\nonumber \\
            =&\;M_4(t-T)^{-\alpha} + \frac{M_3(t+1)^\sigma}{\Gamma(1-\alpha)}t^{\eta-\alpha+1}B(1+ \eta,1-\alpha) \nonumber \\
            \leq &\;M_4(t-T)^{-\alpha} + M_5 t^{\eta + \sigma-\alpha+1},
        \end{align}
        where $M_4:=\frac{1}{\Gamma(1-\alpha)}(T+1)^\sigma\int_0^T|h'(s)|ds$, $M_5:=\frac{2^\sigma M_3 B(\eta+1,1-\alpha)}{\Gamma(1-\alpha)}$.
        Because $\alpha-\beta < \sigma < \alpha \leq 1$, $-1< \eta < \alpha-\sigma-1$, from \eqref{4.3.38}, \eqref{4.3.39}, and \eqref{4.3.40}, it shows that $g(t) = O(t^{\eta + \sigma-\alpha+1})$ as $t \to \infty$. Now, by \cite[Theorem 5.2]{Thinh}, we conclude that the asymptotic behavior of an arbitrary non-trivial solution of the system  \eqref{bt3}--\eqref{dkbt3} has the form $O(t^{\max\{-\beta, \eta + \sigma-\alpha+1\}})$ at infinity.
    \end{proof}
   \begin{example}
        Consider the system \eqref{bt4}--\eqref{dkbt4} with $\alpha = 2/3$, $\beta = 1/2$, $a=2$, $b=1$, and $g(t) = ^C D^{2/3} (t+1)^{1/3}\sin((t+1)^{1/4})$. The system is rewritten as
    \begin{align}\label{4.3.41}
        ^C D^{2/3}_{0^+}x(t) + 2 \, ^C D^{1/2}_{0^+}x(t) = -x(t) +  ^C D^{2/3} (t+1)^{1/3}\sin((t+1)^{1/4}), \quad t > 0.
    \end{align}
    In this case, we have $\sigma = 1/3$ and $h(t) = \sin ((t+1)^{1/4})$, $t\geq 0$. It is obvious to see that $\alpha-\beta = 1/6 <1/3=\sigma < 2/3=\alpha$, $h \in C^1[0, \infty)$, $\limsup_{t \to \infty} h(t) = 1 > 0$, and $\liminf_{t \to \infty} h(t) = -1 < 0$. Moreover, $h'(t) = \frac{1}{4}(t+1)^{-3/4} \cos((t+1)^{1/4})$, thus $|h'(t)| < \frac{1}{4}t^{-3/4}$ for all $t > 0$. From this, we have $h' = O(t^{-3/4})$ as $t \to \infty$. Notice that $-1 < -3/4 = \eta < -2/3 = \alpha - \sigma -1$, it deduces that $g$ satisfies the assumption (B)'' in Theorem  \ref{dl4.4}. Therefore, for any $x_0\in \mathbb R$, the non-trivial solution $\varphi(\cdot,x_0)$ of \eqref{4.3.41} is oscillatory and $\lim_{t\to\infty}\varphi(t,x_0)=0$.  Figure 3 shows the orbit of the solution with the initial condition $x(0)=-0.5$ on the interval $[0,150]$.
    \begin{figure}
		\begin{center}
		\includegraphics[scale=1]{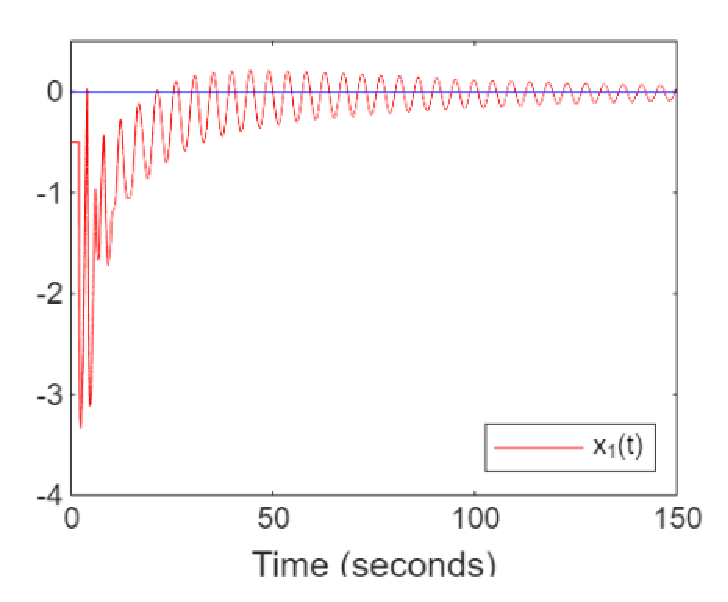}
		\end{center}
		\begin{center}
			\caption{The orbit of the solution to equation \eqref{4.3.41} with the initial condition $x(0) = -0.5$ on the interval $[0,150]$.}
		\end{center}
\end{figure}
    \end{example}
    \subsection{Asymptotic behavior of oscillatory solutions of fractional-order nonlinear equations}
    Consider the following fractional differential equation 
    \begin{align}
        ^C D^\alpha_{0^+}x(t) + a \, ^C D^\beta_{0^+}x(t)& = -bx(t) + \sum_{k=1}^m q_k(t)f_k(x(t)) + g(t),\;t>0,\label{bt4}\\
         x(0)& = x_0\in \mathbb R,\label{dkbt4}
    \end{align}
   where $0 < \beta < \alpha \leq 1$, $a,b$ are positive constants, $q_k :[0,\infty) \to \mathbb R$, $k=1,\ldots,m$, are continuous, bounded and eventually non-negative. Suppose that $g : [0,\infty) \to \mathbb R$ is continuous and satisfies the condition (B)'' in Theorem \ref{dl4.4} and $f_k : \mathbb R \to \mathbb R$, $k=1,\dots,m$, are continuous such that the assumptions below are true.
    \begin{itemize}
        \item [(F1)] $f_k(0) = 0$.
        \item [(F2)] $f_k$ is locally Lipschitz continuous at the origin and $\lim_{r \to 0} \ell_{f_k}(r) = 0$, where
        \begin{align*}
            \ell_{f_k}(r) = \sup_{|x|,|\hat x| \leq r, x \neq \hat x}\frac{|f_k(x)-f_k(\hat x)|}{|x- \hat x|}.
        \end{align*}
        \item [(F3)] $xf_k(x) \leq 0$ for all $x \in \mathbb R$.
    \end{itemize}
    \begin{theorem}\label{dl4.6} Consider the initial value problem \eqref{bt4}--\eqref{dkbt4}. Then for every $\epsilon > 0$ small enough, there exit positive parameters $\delta_1 = \delta_1(\epsilon), \delta_2 = \delta_2(\epsilon) > 0$ such that if $|g(t)| \leq \delta_1$ for all $t \geq 0$ and $|x_0| < \delta_2$, the solution $\varphi(\cdot,x_0)$ is oscillatory and $|\varphi(t,x_0)|\leq \varepsilon$ for all $t\geq 0$. Moreover, $\lim_{t\to\infty}\varphi(t,x_0)=0$.
    \end{theorem}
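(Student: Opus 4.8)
The plan is to combine three ingredients that are already available: the oscillation criterion of Theorem~\ref{dl1}, the comparison principles of Theorem~\ref{ssnh} and Corollary~\ref{sslh}, and the linear asymptotic theory used in Theorem~\ref{dl4.4} (the integral representation of the basic solution from \cite{Thinh}). The starting observation is that, by (F1), the constant function $0$ is the solution of \eqref{bt4}--\eqref{dkbt4} when $x_0=0$ and $g\equiv0$; this makes the zero solution the natural reference state and, through continuous dependence on the data (Theorem~\ref{dlttdn}), lets us keep $\varphi(\cdot,x_0)$ small on any fixed compact interval by shrinking $\delta_1,\delta_2$.

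First I would establish the uniform bound $|\varphi(t,x_0)|\le\epsilon$ for all $t\ge0$, which simultaneously yields global existence. Fix $\epsilon>0$ so small that, by (F2), the local Lipschitz moduli $\ell_{f_k}(\epsilon)$ are as small as needed, and let $T_2\ge0$ be a time past which every $q_k$ is non-negative. On the compact interval $[0,T_2]$ continuous dependence on $(x_0,g)$ forces $|\varphi|\le\epsilon/2$ once $\delta_1,\delta_2$ are small. For $t\ge T_2$ I would exploit the sign structure: wherever $\varphi(t)>0$, (F3) together with $q_k(t)\ge0$ gives $\sum_k q_k(t)f_k(\varphi(t))\le0$, so that
\[
{}^{C}D^{\alpha}_{0^+}\varphi + a\,{}^{C}D^{\beta}_{0^+}\varphi \le -b\varphi + \delta_1,
\]
i.e.\ $\varphi$ is a subsolution of the linear comparison problem ${}^{C}D^{\alpha}_{0^+}w + a\,{}^{C}D^{\beta}_{0^+}w = -bw+\delta_1$, and symmetrically a supersolution of the one with forcing $-\delta_1$. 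Applying Theorem~\ref{ssnh} and Corollary~\ref{sslh} sandwiches $\varphi$ between two linear solutions whose sup-norms are $O(\delta_1+|x_0|)$; choosing $\delta_1,\delta_2$ small makes this $\le\epsilon$. The one technical point here is that the Caputo operators are anchored at $0$, so the restart at $T_2$ must be phrased as a subsolution inequality holding at the \emph{first} contact time, exactly as in the proof of Lemma~\ref{bdss1}, rather than as a genuine reinitialisation.

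With the global bound in hand, oscillation is immediate: writing the right-hand side of \eqref{bt4} as $F(t,x)=-bx+\sum_k q_k(t)f_k(x)$, we have $xF(t,x)=-bx^2+\sum_k q_k(t)\,x f_k(x)\le0$ for all large $t$, because $b>0$, the $q_k$ are eventually non-negative and $xf_k(x)\le0$ by (F3); thus $F$ obeys condition~(A). Since $g$ satisfies (B)'' which, as noted in Theorem~\ref{dl4.4}, implies (B), Theorem~\ref{dl1} shows that every nontrivial $\varphi(\cdot,x_0)$ is oscillatory.

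Finally, for the decay I would regard $\Phi(t):=\sum_k q_k(t)f_k(\varphi(t))+g(t)$ as the forcing of the linear equation ${}^{C}D^{\alpha}_{0^+}x+a\,{}^{C}D^{\beta}_{0^+}x=-bx+\Phi$, whose fundamental solution decays polynomially (the spectral/integral representation underlying \cite[Theorem~5.2]{Thinh} and Theorem~\ref{dl4.4}). Since $g=O(t^{\eta+\sigma-\alpha+1})\to0$ by the computation in Theorem~\ref{dl4.4}, and since on $\{|x|\le\epsilon\}$ the nonlinearity has Lipschitz constant at most $\sum_k\|q_k\|_\infty\ell_{f_k}(\epsilon)$, I would set up a contraction in a weighted Banach space of functions decaying like $t^{\max\{-\beta,\,\eta+\sigma-\alpha+1\}}$: the polynomial decay of the kernel controls the contribution of $g$, while the smallness of $\ell_{f_k}(\epsilon)$ (guaranteed by shrinking $\epsilon$) makes the Duhamel map a contraction, so $\varphi(\cdot,x_0)$ inherits that decay rate and in particular $\lim_{t\to\infty}\varphi(t,x_0)=0$. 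The main obstacle is precisely this last step: unlike $g$, the nonlinear part of $\Phi$ does not decay a priori, so the conclusion cannot follow from a one-shot estimate and instead requires the self-improving weighted-norm argument, leaning on both the polynomial decay of the two-term fundamental solution and the smallness of the Lipschitz modulus near the origin.
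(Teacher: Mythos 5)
Your proposal is correct in its overall architecture and shares the paper's three pillars (oscillation via condition (A) and Theorem \ref{dl1}/Corollary \ref{hq1}; the comparison principles of Theorem \ref{ssnh} and Corollary \ref{sslh}; the decay machinery of \cite{Thinh} built on $G^{1}_{\alpha,\beta;a,b}$), but the middle of your argument takes a genuinely different route. The paper does not compare $\varphi$ with \emph{linear} problems: it sets $h_1(x)=-C\sum_k|f_k(x)|$ and $h_2(x)=C\sum_k|f_k(x)|$ with $C\geq\sup_t|q_k(t)|$, so that $h_1(x)\leq\sum_k q_k(t)f_k(x)\leq h_2(x)$ for \emph{every} $x$ and $t$; then $\varphi$ is a sub/supersolution of the two auxiliary \emph{nonlinear} problems with right-hand sides $-bx+h_i(x)+g(t)$ on all of $(0,T]$, Theorem \ref{ssnh} and Corollary \ref{sslh} apply verbatim, and \cite[Theorem 5.7]{Thinh} delivers both the $\epsilon$-bound and the decay of the two majorants in one stroke, after which squeezing finishes the proof. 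Your linear comparison, by contrast, drops the nonlinearity entirely, and the resulting subsolution inequality ${}^{C}D^{\alpha}_{0^+}\varphi+a\,{}^{C}D^{\beta}_{0^+}\varphi\leq-b\varphi+\delta_1$ fails wherever $\varphi<0$ --- which is an unbounded set, since $\varphi$ oscillates --- so Theorem \ref{ssnh} cannot be invoked as stated; your first-contact fix does repair this (at a first contact time $\varphi$ equals the linear majorant, which is positive there), but it forces you to prove separately that the linear majorant stays positive and to absorb the transient on $[0,T_2]$ into the forcing, neither of which the paper needs. Likewise, your weighted-norm contraction for the decay is essentially a re-derivation of \cite[Theorem 5.7]{Thinh} applied directly to $\varphi$; it is self-contained and correctly identifies the key difficulty (the nonlinear forcing does not decay a priori, so a one-shot Duhamel estimate is insufficient), whereas the paper avoids redoing this work by applying the cited theorem to the autonomous-nonlinearity majorant problems. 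In short: your route is viable and more self-contained, but costs you a positivity lemma for the linear comparison solution and a full contraction argument; the paper's nonlinear sandwich buys a sign-free application of the comparison principle and a black-box use of the known asymptotic result.
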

    \begin{proof}
         Define $F(t,x) = -bx + \sum_{k=1}^m q_k(t)f_k(x)$ for all $t\geq 0$, $x\in\mathbb R$. Since $q_k$, $k=1,\dots,m$, are eventually non-negative, it follows from (F3) that the function $F$ satisfies the condition (A) in Theorem  \ref{dl1}. Moreover, the condition (B)' in Corollary \ref{hq1} is verified by the function $g$. Hence, by Corollary \ref{hq1}, for any $x_0\in \mathbb R$, the solution $\varphi(\cdot,x_0)$ of the system \eqref{bt3}--\eqref{dkbt3} (if it exists globally) is oscillatory. 
         
         We next show that for the initial condition $x_0$ small enough and the inhomogeneous term $g$ small uniformly on $[0,\infty)$, the solution $\varphi(\cdot,x_0)$ exists globally and converges to the origin. The approach proposed in \cite[Theorem 5.7]{Thinh} will be applied to do that. We first recall some important properties of the function $G^{\kappa}_{\alpha,\beta;a,b}(t)$ given by
        \begin{align*}
            G^{\kappa}_{\alpha,\beta;a,b}(t) := \mathcal L^{-1}\left(\frac{s^{\kappa-1}}{s^\alpha + a s^\beta +b}\right)(t) \quad t\geq 0, \kappa \in \{\alpha,\beta,1\}. 
        \end{align*}
        Taking into account \cite[Proposition 4.4 (ii) and (iii)]{Thinh}, there is a positive constant $K > 0$ such that
        \begin{align}\label{4.4.44}
            |G^{1}_{\alpha,\beta;a,b}(t)| \leq \frac{K}{t^{1-\alpha}},\; t\in (0,1),\; |G^{1}_{\alpha,\beta;a,b}(t) \leq \frac{K}{t^{1 + \beta}},\; t\geq 1.
        \end{align}
       Furthermore, using the same arguments as in the proof of \cite[Proposition 4.4 (v)]{Thinh}, for any $\gamma \in (0,1)$, we can find a constant $K_\gamma > 0$ with
       \begin{align}\label{4.4.45}
           \sup_{t\geq 0}t^\gamma \int_0^t |G^{1}_{\alpha,\beta;a,b}(t-s)|s^{-\gamma}ds \leq K_\gamma.
       \end{align}
       We conclude that
       \begin{align*}
           \lim_{t \to \infty}\int_0^t G^{1}_{\alpha,\beta;a,b}(t-s)g(s)ds = 0.
       \end{align*}
       Indeed, it deduces from the proof of Theorem \ref{dl4.4} that $g = O(t^{-\gamma})$ as $t \to \infty$, here $\gamma =-(\eta + \sigma-\alpha+1) > 0$. Thus, there is a constant $M > 0$ and $t_0 > 0$ large enough satisfying
       \begin{align}\label{4.4.46}
           |g(t)| \leq \frac{M}{t^\gamma}, \; t\geq t_0.
       \end{align}
       Then, for $t > t_0 + 1$, we have  
       \begin{align}\label{I}
           \left| \int_0^t G^{1}_{\alpha,\beta;a,b}(t-s)g(s)ds \right| &\leq \int_0^{t_0} |G^{1}_{\alpha,\beta;a,b}(t-s)|\,|g(s)|ds + \int_{t_0}^{t} |G^{1}_{\alpha,\beta;a,b}(t-s)|\,|g(s)|ds \nonumber \\
           &= I_1(t) + I_2(t) .
       \end{align}
       Let $s \in [0, t_0]$. By \eqref{4.4.44}, we estimate 
       \begin{align*}
           |G^{1}_{\alpha,\beta;a,b}(t-s)| \leq K(t-s)^{-1-\beta} \leq K(t-t_0)^{-1-\beta}.
       \end{align*}
       From this, 
       \begin{align}\label{I1}
           I_1(t) \leq  K(t-t_0)^{-1-\beta}\int_0^{t_0}|g(s)|ds = K_1(t-t_0)^{-1-\beta},
       \end{align}
      where $K_1 = K \int_0^{t_0}|g(s)|ds.$ To deal with $I_2(t)$, by virtue of \eqref{4.4.45} and \eqref{4.4.46}, we see 
       \begin{align}\label{I2}
           I_2(t) &\leq M\int_{t_0}^{t} |G^{1}_{\alpha,\beta;a,b}(t-s)|\,s^{-\gamma}ds\nonumber\\
           &\leq M \int_{0}^{t} |G^{1}_{\alpha,\beta;a,b}(t-s)|\,s^{-\gamma}ds\nonumber\\
           &\leq M K_\gamma t^{-\gamma} = K_2 t^{-\gamma}.
       \end{align}
       By combining \eqref{I}, \eqref{I1} and \eqref{I2}, for any $t > t_0 + 1$, it implies that  
       \begin{align*}
           \left| \int_0^t G^{1}_{\alpha,\beta;a,b}(t-s)g(s)ds \right| \leq K_1(t-t_0)^{-1-\beta} + K_2 t^{-\gamma},
       \end{align*}
       and thus
        \begin{align}\label{4.4.50}
           \lim_{t \to \infty}\int_0^t G^{1}_{\alpha,\beta;a,b}(t-s)g(s)ds = 0.
       \end{align}
       Fix $\hat\epsilon > 0$ small enough such that $f_k, k=1,\ldots,m$ are Lipchitz continuous on $B_{\hat\epsilon}(0):=\{x\in\mathbb R:|x|\leq \hat{\varepsilon}\}$. Take $C > 0$ such that $|q_k(t)| \leq C,\; k=1,\ldots,m$ for all $t \geq 0$ and set
       \begin{align*}
           h_1(x): = - C\sum_{k=1}^m |f_k(x)|\,\,\,\text{and}\,\,\, h_2(x): =   C\sum_{k=1}^m |f_k(x)|.
       \end{align*}
       It is easy to see that $h_1$ and $h_2$ satisfy the assumptions (F1) and (F2).
       
 Consider the initial value problems 
       \begin{align}
        ^C D^\alpha_{0^+}x(t) + a \, ^C D^\beta_{0^+}x(t) &= -bx(t) + h_1(x) + g(t), \quad t> 0, \label{ivp_t11}\\
        x(0) &= x_0,\label{ivp_t12}
    \end{align}
        and
       \begin{align}
        ^C D^\alpha_{0^+}x(t) + a \, ^C D^\beta_{0^+}x(t) &= -bx(t) + h_2(x) + g(t), \quad t> 0, \label{ivp_t21}\\
        x(0) &= x_0.\label{ivp_t22}
    \end{align}
    Choosing $\varepsilon>0$ be small enough, for example, $\varepsilon\leq \hat{\varepsilon}$ and
    \[
    \max_{1\leq k\leq m}\ell_{f_k}(\varepsilon)\int_0^\infty |G^1_{\alpha,\beta;a,b}(t)|dt<\frac{1}{2}.
    \]
    In light of \cite[Theorem 5.7]{Thinh}, we can find positive parameters $ \delta_1, \delta_2>0$ such that if $|g(t)| \leq \delta_1$ for all $t \geq 0$ and $|x_0| \leq \delta_2$ then $|\varphi_1(t,x_0)|,  |\varphi_2(t,x_0)|\leq \epsilon$ for all $t \geq 0$ and $\lim_{t \to \infty} \varphi_1(t,x_0) =\lim_{t \to \infty} \varphi_2(t,x_0)= 0$, here $\varphi_1(\cdot,x_0), \varphi_2(\cdot,x_0)$ are the solutions of the systems \eqref{ivp_t11}--\eqref{ivp_t12} and \eqref{ivp_t21}--\eqref{ivp_t22}, respectively. On the other hand, according to Theorem \ref{ssnh} and Corollary \ref{sslh}, we claim that
       \begin{align}\label{4.4.51}
           \varphi_1(t,x_0) \leq \varphi(t,x_0) \leq \varphi_2(t,x_0), \quad t \geq 0.
       \end{align}
       This implies that the solution $\varphi(\cdot,x_0)$ exists globally and $\lim_{t \to \infty} \varphi(t,x_0)= 0$. The proof is complete.
    \end{proof}
    \begin{example}
        Consider the system \eqref{bt4}--\eqref{dkbt4} with $\alpha = 2/3$, $\beta = 1/2$, $a=2$, $b=1$, $m=1$, $q_1(t) = 1,\; t \geq 0$, $f_1(x) =-x^3,\; x \in \mathbb R$  and $g(t) = \frac{1}{100}{}^C D^{2/3} (t+1)^{1/3}\sin((t+1)^{1/4})$. The system is rewritten as
    \begin{align}\label{4.3.59}
        ^C D^{2/3}_{0^+}x(t) + 2 \, ^C D^{1/2}_{0^+}x(t) = -x(t) -2x^2(t)+ \frac{1}{100}{} ^C D^{2/3} (t+1)^{1/3}\sin((t+1)^{1/4}), \quad t > 0. 
    \end{align}
    It is obvious to see that the assumptions in Theorem \ref{dl4.6} are satisfied. Thus, for the initial condition $x_0$ is small enough, the solution $\varphi(\cdot,x_0)$ is oscillatory and $\lim_{t\to\infty}\varphi(t,x_0)=0$. Figure 4 depicts the orbit of the solution to equation \eqref{4.3.59} with the initial condition $x(0) = 0.6$ on the interval $[0,150]$.
     \begin{figure}
		\begin{center}
		\includegraphics[scale=0.7]{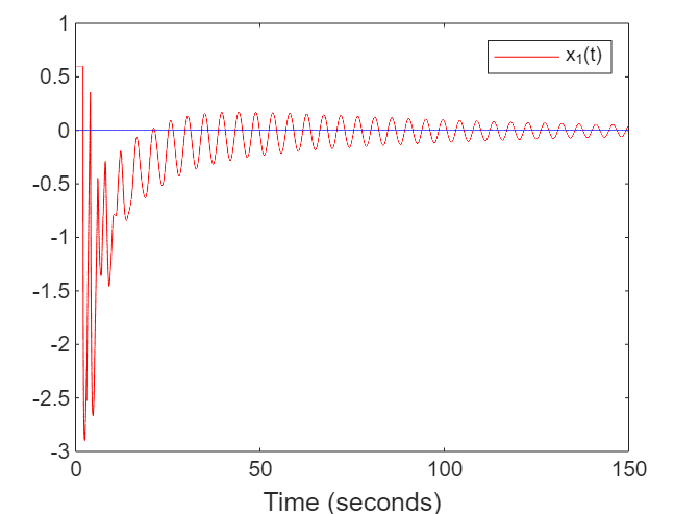}
		\end{center}
		\begin{center}
			\caption{The orbit of the solution to equation \eqref{4.3.59} with the initial condition $x(0) = 0.6$ on the interval $[0,150]$.}
		\end{center}
\end{figure}
    \end{example}
\section{On the asymptotic behavior of non-oscillatory solutions of multi-order fractional differential equations}
Consider the equation
\begin{align}\label{btdk}
    &{}^C D^{\alpha}_{0^+}x(t) + a\,{}^C D^{\beta}_{0^+}x(t) + q(t)f(x) + \mathrm{sgn}(x(t))g(t) = 0, \quad t > 0, \\
    & \hspace{1 cm} x(0) =x_0, x^{(i)}(0) = x_i, i =1,2,\ldots, n-1, \label{gtd}
\end{align}
where $n-1 < \beta < \alpha \leq  \lceil  \alpha \rceil = n$, $n \geq 3$, 
%hoặc $n-2<\beta \leq n-1 < \alpha \leq n$
$a, x_i \in \mathbb R, i=0,1,2, \ldots,n-1 $, $a > 0$, $f : \mathbb R \to \mathbb R$, $q : [0, \infty) \to \mathbb R$, and $g : [0, \infty) \to \mathbb R$ are continuous.

In this section, the following restrictions are imposed on the functions $f(\cdot)$, $q(\cdot)$, and $g(\cdot)$. 
\begin{itemize}
    \item [(G1)] $xf(x) > 0$ for all $x \neq 0$.
    \item [(G2)] There are positive constants $c_1, \eta > 0$ such that $|f(x)| \geq c_1 |x|^\eta$ for all $|x| \geq 1$.
    \item [(G3)] There exist positive constants $c_2 > 0$, $\mu \in (0, \frac{n-2}{n-\beta})$  with $|f(x)| \geq c_2 |x|^\mu$ for all $|x| < 1$.
    \item [(H)] There are $T_1,c_3>0$ so that $q(t) \geq c_3 > 0$ for all $t \geq T_1.$
    \item [(K)] There are $\sigma \in (n-\beta,1)$, $c_4\in (0,\infty)$  and $T_2>0$ such that $$g(t) \geq c_4 t^{n-\beta-\sigma}, \quad t \geq T_2.$$
\end{itemize}
Under the assumptions mentioned above, we aim to study the asymptotic behavior of the non-oscillatory solutions of the higher-order fractional differential equation \eqref{btdk}. Our main contributions are as follows.
\begin{theorem}\label{dl3}
     Assume that the assumptions $\textup{(G1), (G2), (H), (K)}$ are true and $n$ is odd. Let $x(\cdot)$ be a non-oscillatory solution of the initial value problem \eqref{btdk}--\eqref{gtd}. Then, $$x(t)=O(t^{\alpha-n})\;\;\text{as}\;t\to\infty.$$
\end{theorem}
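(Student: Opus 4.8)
The plan is to convert \eqref{btdk}--\eqref{gtd} into an equivalent Volterra integral equation and then read off the decay rate from the competition between the polynomial part, the fractional ``memory'' term and the forcing. First I would apply $I^{\alpha}_{0^+}$ to \eqref{btdk}; invoking Lemma \ref{tdtp} on the $\alpha$-term, Lemma \ref{nhtp} together with Lemma \ref{tdtp} on the $\beta$-term (note $\lceil\beta\rceil=n$ since $n-1<\beta<n$), and $I^{\alpha-\beta}_{0^+}t^{k}=\frac{k!}{\Gamma(\alpha-\beta+k+1)}t^{\alpha-\beta+k}$, this gives for $t>0$
\begin{align*}
x(t) = {}&P(t) - \frac{a}{\Gamma(\alpha-\beta)}\int_0^t (t-s)^{\alpha-\beta-1}x(s)\,\du s \\
&- \frac{1}{\Gamma(\alpha)}\int_0^t (t-s)^{\alpha-1}\bigl(q(s)f(x(s)) + \mathrm{sgn}(x(s))g(s)\bigr)\,\du s,
\end{align*}
where $P(t)=\sum_{k=0}^{n-1}\frac{x_k}{k!}t^{k}+a\sum_{k=0}^{n-1}\frac{x_k}{\Gamma(\alpha-\beta+k+1)}t^{\alpha-\beta+k}$ is an explicit combination of the initial data, with leading power $t^{\alpha-\beta+n-1}$. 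This is the exact analogue of the identity used at the start of the proof of Theorem \ref{dl1}, and every subsequent estimate will be an estimate on these three terms.

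Next I would use that a non-oscillatory solution is eventually of one sign, say $x(t)>0$ for $t\ge t_1$ (the case $x<0$ being symmetric). This is where the hypothesis that $n$ is odd enters: it is needed to fix the admissible sign pattern of $P(t)$ and of the top derivative $x^{(n-1)}(0)$, in the spirit of the Kiguradze--Kneser classification of non-oscillatory solutions, and this sign must be tracked through each step. Fix $t_2\ge\max\{t_1,T_1,T_2\}$ so that simultaneously $x(s)>0$, $f(x(s))>0$ by \textup{(G1)}, $q(s)\ge c_3>0$ by \textup{(H)} and $g(s)\ge c_4 s^{n-\beta-\sigma}>0$ by \textup{(K)} for $s\ge t_2$. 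Then on $[t_2,\infty)$ the memory integral is negative and the forcing integral is positive, so splitting each integral at $t_2$ and discarding the sign-definite tails yields a one-sided inequality for the weighted quantity $t^{n-\alpha}x(t)$; the contributions of the integrals over $[0,t_2]$ are handled exactly as in \eqref{p2}--\eqref{p3} and \eqref{3.2.17}--\eqref{3.2.20}, i.e.\ by the elementary bounds $(t-s)^{\alpha-1}\le (t-t_2)^{\alpha-1}+t^{\alpha-1}$ and their $\alpha-\beta$ counterpart.

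The quantitative heart of the argument is then twofold. First, \emph{boundedness}: I would use \textup{(G2)} to control the superlinear self-interaction, since on any set where $x(t)\ge 1$ one has $\frac{1}{\Gamma(\alpha)}\int (t-s)^{\alpha-1}q(s)f(x(s))\,\du s \ge \frac{c_1 c_3}{\Gamma(\alpha)}\int (t-s)^{\alpha-1}x(s)^{\eta}\,\du s$, a feedback that grows too fast to be compatible with the representation and hence caps the growth of $x$. Second, the \emph{sharp rate}: feeding this bound back into the representation and evaluating the resulting integrals through the Beta-function asymptotics $\int_0^t (t-s)^{\alpha-1}s^{\rho}\,\du s = B(\alpha,\rho+1)t^{\alpha+\rho}$ already exploited in \eqref{4.3.39}--\eqref{4.3.40}, together with \textup{(K)} to locate the forcing exponent $\alpha+n-\beta-\sigma$, forces the growing powers $t^{k}$ and $t^{\alpha-\beta+k}$ of $P(t)$ to cancel against the integral terms, leaving $t^{n-\alpha}x(t)=O(1)$, i.e.\ $x(t)=O(t^{\alpha-n})$.

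The main obstacle I anticipate is precisely this cancellation/matching step. The weight $t^{n-\alpha}$ does not annihilate all polynomial contributions simultaneously, so a successive peeling-off of the powers of $P(t)$ (a matched-asymptotics expansion in decreasing powers of $t$) seems unavoidable, and at each stage one must keep track of the sign forced by the parity of $n$. Isolating the exact surviving exponent $\alpha-n$ from the three competing scales, namely $P(t)\sim t^{\alpha-\beta+n-1}$, the memory term $aI^{\alpha-\beta}_{0^+}x$, and the forcing $I^{\alpha}_{0^+}g\sim t^{\alpha+n-\beta-\sigma}$, is a delicate Tauberian-type estimate for Volterra equations, and this is where the bulk of the technical work will lie.
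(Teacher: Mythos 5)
Your starting point (applying $I^{\alpha}_{0^+}$ to obtain a Volterra representation with the polynomial $P(t)$, the memory term $aI^{\alpha-\beta}_{0^+}x$, and the forcing) is sound and is indeed the analogue of the identity used at the start of the proof of Theorem \ref{dl1}. But the proposal is not a proof: the two steps you yourself call the ``quantitative heart'' --- boundedness of $x$ and the extraction of the exact rate $t^{\alpha-n}$ --- are only described, not executed, and the mechanism you propose for them (a Tauberian/matched-asymptotics cancellation of the growing powers $t^{k}$ and $t^{\alpha-\beta+k}$ of $P(t)$ against the integral terms) is precisely the step that is very hard to make rigorous. Since $P(t)$ has leading power $t^{\alpha-\beta+n-1}$ while the target is $x(t)=O(t^{\alpha-n})$ with $\alpha-n\in(-1,0]$, everything must cancel to many orders, and nothing in your outline explains how to enforce or verify that cancellation. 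Likewise, your boundedness step (``a feedback that grows too fast to be compatible with the representation'') is a heuristic rather than an argument: in the Volterra equation the nonlinear term enters with a negative sign, so by itself it does not contradict growth of $x$; one needs additional sign and monotonicity information to convert \textup{(G2)} into the integrability statement $\int^{\infty}f(x(s))\,ds<\infty$ that actually caps $x$ (this is what the paper's inequality \eqref{Tuan_add1} does).

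The paper sidesteps both difficulties by a device your proposal gestures at (``Kiguradze--Kneser classification'') but does not construct. It introduces the chain $y_1=I^{n-\alpha}_{0^+}x+a\,I^{n-\beta}_{0^+}x$ and $y_2,\dots,y_n$ as in \eqref{x}, for which $y_i'=y_{i+1}$ and $y_n'=-q(t)f(x)-\mathrm{sgn}(x)g(t)$ plus explicit decaying terms (Lemma \ref{bd2}); the initial-data polynomials are absorbed into the definitions of the $y_i$ and dominated by $g$ via \eqref{24}--\eqref{25}, so the cancellation problem never arises. The admissible sign patterns of $(y_1,\dots,y_n)$ are then classified (Lemma \ref{bd6}), all but the pattern \eqref{a} are excluded for $n$ odd using Lemmas \ref{bd3}--\ref{bd5} (this is where \textup{(G2)}, \textup{(H)}, \textup{(K)} and the parity of $n$ actually enter), and the rate $O(t^{\alpha-n})$ follows from the identity ${}^{C}D^{n-\alpha}_{0^+}y_1=x+a\,I^{\alpha-\beta}_{0^+}x$ combined with one-sided bounds on ${}^{C}D^{n-\alpha}_{0^+}y_1$ derived from the sign and monotonicity of $y_2$ (Lemma \ref{bd7}). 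Without this chain, or an explicit substitute for it, your outline does not close the argument.
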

\begin{theorem}\label{dl4}
    Assume that the assumptions $\textup{(G1), (G2), (G3), (H), (K)}$ are true and $n$ is even. Let $x(\cdot)$ be a non-oscillatory solution of the initial value problem \eqref{btdk}--\eqref{gtd}. Then, $$x(t)=O(t^{\alpha-n})\;\;\text{as}\;t\to\infty.$$
\end{theorem}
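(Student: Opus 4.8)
The plan is to argue by contradiction, in the spirit of the proofs of Theorem \ref{dl1} and Theorem \ref{dl3.5}, and to read off the decay rate from the integral representation of the solution together with the lower bounds (H) and (K). Since $x$ is non-oscillatory I may assume it is eventually of one sign; the case $x(t)<0$ reduces to $x(t)>0$ by replacing $x$ with $-x$ (the assumptions (G1)--(G3) are invariant under this, and $\mathrm{sgn}(x)g$ changes sign consistently). So assume $x(t)>0$ for $t\ge t_1\ge\max\{T_1,T_2\}$. First I would apply $I^\alpha_{0^+}$ to \eqref{btdk}: using Lemma \ref{tdtp}, together with the semigroup property (Lemma \ref{nhtp}) applied to the term $a\,{}^C D^\beta_{0^+}x$, and writing $m=\lceil\beta\rceil=n$, this yields an identity of the form
\begin{align*}
x(t)=\Phi(t)-\frac{a}{\Gamma(\alpha-\beta)}\int_0^t(t-s)^{\alpha-\beta-1}x(s)\,ds-\frac{1}{\Gamma(\alpha)}\int_0^t(t-s)^{\alpha-1}\big(q(s)f(x(s))+\mathrm{sgn}(x(s))g(s)\big)\,ds,
\end{align*}
where $\Phi$ collects the contributions of the initial data $x_0,\dots,x_{n-1}$ and is a finite combination of powers $t^k$ and $t^{\,k+\alpha-\beta}$ of degree at most $n-2+\alpha-\beta$.

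Next I would estimate the forcing integral. On $[t_1,\infty)$ the integrand $q(s)f(x(s))+\mathrm{sgn}(x(s))g(s)$ is nonnegative by (G1), (H) and the positivity of $g$ from (K); the contributions over $[0,t_1]$ are of order $O(t^{\alpha-1})$, and the coupling term $-\frac{a}{\Gamma(\alpha-\beta)}\int_0^t(t-s)^{\alpha-\beta-1}x(s)\,ds$ is nonpositive. Combining these with the lower bound $g(s)\ge c_4 s^{\,n-\beta-\sigma}$ and the Beta-integral asymptotics $\frac{1}{\Gamma(\alpha)}\int_{t_1}^t(t-s)^{\alpha-1}s^{\,n-\beta-\sigma}\,ds$, I would isolate the leading behaviour of the right-hand side and then multiply through by a suitable negative power of $t$, exactly as in the passage to \eqref{p1}, so as to convert the sign condition $x(t)>0$ into a statement about $\liminf_{t\to\infty}t^{\rho}x(t)$. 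The conditions (G2) (valid for $|x|\ge1$) and (G3) (valid for $|x|<1$) enter here to bound $q(s)f(x(s))$ from below by a power of $|x(s)|$ on each amplitude regime, so that the nonlinear term cannot be absorbed into the polynomial $\Phi$; this is where the constraint $\mu\in(0,\tfrac{n-2}{n-\beta})$ is needed, to keep the small-amplitude contribution subcritical relative to the exponents produced by $\Phi$ and the $g$-integral.

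The parity of $n$ enters through the sign of the dominant initial-data monomial in $\Phi$ and, more essentially, through the admissible sign configuration of the Caputo derivatives of a one-signed solution: for odd $n$ the large-amplitude lower bound (G2) already suffices to force the required decay, whereas for even $n$ the solution may descend into the regime $|x|<1$, and (G3) is precisely what is needed to retain a usable lower bound on $f(x)$ there. To pin down the exponent $\alpha-n$ itself I would compare $x$ with the associated homogeneous dynamics by means of Theorem \ref{ssnh} and Corollary \ref{sslh}, sandwiching $x$ between solutions of two auxiliary equations whose forcing captures $\pm\big(q f(x)+g\big)$; the relevant resolvent kernel $G^{1}_{\alpha,\beta;a,b}$ decays at the rate $t^{\alpha-n}$ (cf. the estimates recalled in the proof of Theorem \ref{dl4.6} and \cite[Proposition 4.4]{Thinh}), which is what should propagate to $x$.

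The hard part will be the sign analysis. Unlike the integer-order situation there is no Kiguradze-type lemma governing the signs of the intermediate Caputo derivatives, so establishing that a one-signed solution is eventually monotone and exhibits the correct, parity-dependent configuration is the genuine obstacle; Lemma \ref{bdv} and the pointwise arguments used in Lemma \ref{bdss1} are the tools I would try to adapt for this. A secondary difficulty is the self-referential coupling term $\frac{a}{\Gamma(\alpha-\beta)}\int_0^t(t-s)^{\alpha-\beta-1}x(s)\,ds$, which must be handled simultaneously with the nonlinearity; I would treat it by a bootstrap, first securing the crude bound $x(t)\to 0$ and only afterwards upgrading it to the sharp rate $O(t^{\alpha-n})$.
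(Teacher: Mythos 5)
Your proposal correctly identifies the central obstacle --- the absence of a Kiguradze-type sign classification for the intermediate derivatives of a one-signed solution --- but it does not supply the idea that overcomes it, and that idea is the entire content of the paper's proof. The paper introduces auxiliary functions $y_1,\dots,y_n$ (see \eqref{x}), built from $I^{n-\alpha}_{0^+}x + a\,I^{n-\beta}_{0^+}x$ and its successive Caputo derivatives corrected by explicit polynomial terms, and proves in Lemma \ref{bd2} that they satisfy the genuine first-order chain $y_i'=y_{i+1}$ with $y_n'<0$ eventually. Because the $y_i$ obey an honest ODE chain, a Kiguradze-type classification \emph{can} be established for them (Lemma \ref{bd6}), after ruling out the bad sign configurations via Lemmas \ref{bd3}--\ref{bd5}; the decay rate $O(t^{\alpha-n})$ is then extracted in Lemma \ref{bd7} from the identity ${}^C D^{n-\alpha}_{0^+}y_1 = x + a\,I^{\alpha-\beta}_{0^+}x$ together with the bound $y_2(t)\lesssim 1/t$ that follows from monotonicity of $y_2$ and boundedness of $y_1$ (the latter being the point of Lemma \ref{bd8} in the even case). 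Your plan to "adapt Lemma \ref{bdv} and the pointwise arguments of Lemma \ref{bdss1}" does not reach this construction, so the proof is missing its key step.

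Two further components of your plan would fail as stated. First, the comparison results Theorem \ref{ssnh} and Corollary \ref{sslh} are proved only for $0<\beta<\alpha\le 1$ with a scalar initial condition, whereas equation \eqref{btdk} has $n-1<\beta<\alpha\le n$ with $n\ge 3$; the pointwise sign argument at a first touching point (via Lemma \ref{bdv}) does not carry over to higher order, so the sandwiching step is not available here. Second, your assertion that the resolvent kernel $G^{1}_{\alpha,\beta;a,b}$ decays like $t^{\alpha-n}$ is unsupported --- the estimates recalled in the proof of Theorem \ref{dl4.6} give decay $t^{-1-\beta}$ in the regime $0<\beta<\alpha\le 1$, and the exponent $\alpha-n$ in the present theorem does not come from any resolvent kernel but from the factor $(t-t_0)^{-(n-\alpha)}$ produced when ${}^C D^{n-\alpha}_{0^+}y_1$ is estimated through Lemma \ref{dhc}. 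Finally, the contradiction framework you borrow from Theorems \ref{dl1} and \ref{dl3.5} is designed to prove oscillation, not to bound a non-oscillatory solution from above; here one must work directly with the assumed sign of $x$ rather than derive $\liminf t^{\rho}x(t)=-\infty$.
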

The proof of the above results is long and requires much preparation. To make the presentation clear and easy to follow, we have divided it into a sequence of lemmas.

Throughout this part, the following notations will be used:
\begin{align}\label{x}
    \begin{split}
        y_1(t) = &I^{n-\alpha}_{0^+}x(t) + a\,I^{n-\beta}_{0^+}x(t)\\
         = &I^{n-\alpha}_{0^+}\left(x(t) + a\,I^{\alpha-\beta}_{0^+}x(t)\right), \\
        y_2(t)= &^C D^{\alpha-n+1}_{0^+}x(t) + a \,^C D^{\beta-n+1}_{0^+}x(t)+\frac{x_0}{\Gamma(n-\alpha)}t^{n-\alpha-1}+\frac{a\,x_0}{\Gamma(n-\beta)}t^{n-\beta-1},\\
        y_i(t) =& ^C D^{\alpha-n -1 +i}_{0^+}x(t) + a\,^C D^{\beta-n-1 +i}_{0^+}x(t) \\
        &+ \sum_{k=0}^{i-2}\frac{x_k\prod_{j=0}^{i-2-k}(n-\alpha-i+2+k+j)}{\Gamma(n-\alpha+1)}t^{n-\alpha-i+1+k} \\
        &+\sum_{k=0}^{i-2}\frac{a\,x_k\prod_{j=0}^{i-2-k}(n-\beta-i+2+k+j)}{\Gamma(n-\beta+1)}t^{n-\beta-i+1+k},\; i = 3, \ldots,n,\;t>0.
    \end{split}    
\end{align}
\begin{lemma}\label{bd2}
     Let $x(\cdot)$ be a solution of the system  \eqref{btdk}--\eqref{gtd} and $y_i, i = 1, 2,3, \ldots, n$ are defined as in \eqref{x}. Then, $y_1 \in AC[0, \infty)$, $y_i \in C(0,\infty),\; i=2,3,\ldots,n$, and
    \begin{align}\label{hptvp}
        \begin{split}
            y_i'(t) =& y_{i+1}(t),\; t>0,\; i=1, 2, \ldots, n-1, \\
            y_n'(t) = &-q(t)f(x(t))-\mathrm{sgn}(x(t))g(t)+ \sum_{k=0}^{n-1}\frac{a_k\prod_{j=0}^{n-1-k}(1-\alpha+k+j)}{\Gamma(n-\alpha+1)}t^{-\alpha+k}\\
            &+\sum_{k=0}^{n-1}\frac{aa_k\prod_{j=0}^{n-1-k}(1-\beta+k+j)}{\Gamma(n-\beta+1)}t^{-\beta+k},\; t>0.
        \end{split}    
    \end{align}
    In addition, if (G1), (H), (K) are satisfied and $x(\cdot)$ is eventually positive, then $y_n(\cdot)$ is monotonically decreasing and $y_i$,  $i=1,2,\ldots,n-1,$ are monotonic on the interval $[T,\infty)$ with some $T>0$ large enough.
\end{lemma}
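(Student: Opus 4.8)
The plan is to linearise the problem by introducing the auxiliary function $w(t):=x(t)+a\,I^{\alpha-\beta}_{0^+}x(t)$, so that, by Lemma~\ref{nhtp}, $y_1=I^{n-\alpha}_{0^+}w$. Since $n-1<\alpha\le n$ forces $\alpha-n\in(-1,0]$, the integration $I^{n-\alpha}_{0^+}$ is exactly the Riemann--Liouville operator $^{RL}D^{\alpha-n}_{0^+}$, and the heart of the argument is the identity
\begin{align*}
    y_1^{(i-1)}(t)=\,^{RL}D^{\alpha-n+i-1}_{0^+}w(t),\qquad i=1,2,\dots,n,
\end{align*}
which is nothing but the definition of the Riemann--Liouville derivative: writing $\mu=\alpha-n+i-1\in(i-2,i-1]$ one has $\lceil\mu\rceil=i-1$ and $^{RL}D^{\mu}_{0^+}=D^{(i-1)}I^{\,(i-1)-\mu}_{0^+}=D^{(i-1)}I^{n-\alpha}_{0^+}$. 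Once this is established, the relations $y_i'=y_{i+1}$ are immediate, provided the right-hand side coincides with the expression for $y_i$ in \eqref{x}.

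To match the two sides I would expand $^{RL}D^{\mu}_{0^+}w$ using $w=x+a\,I^{\alpha-\beta}_{0^+}x$ and the composition law $^{RL}D^{\mu}_{0^+}I^{\alpha-\beta}_{0^+}x=\,^{RL}D^{\mu-(\alpha-\beta)}_{0^+}x$, then convert each Riemann--Liouville derivative into a Caputo derivative by subtracting the Taylor polynomial: for $\nu=\alpha-n+i-1$ (so $\lceil\nu\rceil=i-1$),
\begin{align*}
    ^{RL}D^{\nu}_{0^+}x(t)=\,^C D^{\nu}_{0^+}x(t)+\sum_{k=0}^{i-2}\frac{x_k}{\Gamma(k+1-\nu)}\,t^{\,k-\nu}.
\end{align*}
The only genuine bookkeeping is the \textbf{identity between these Gamma coefficients and the products appearing in \eqref{x}}, namely $\prod_{j=0}^{i-2-k}(n-\alpha-i+2+k+j)=\Gamma(n-\alpha+1)/\Gamma(n-\alpha-i+2+k)$, which is a telescoping of $\Gamma(z+1)=z\Gamma(z)$; the $\beta$-terms are handled identically. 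For the final equation I would take $i=n$, giving $y_n'=\,^{RL}D^{\alpha}_{0^+}w=\,^{RL}D^{\alpha}_{0^+}x+a\,^{RL}D^{\beta}_{0^+}x$, convert to Caputo derivatives, and then substitute $^C D^{\alpha}_{0^+}x+a\,^C D^{\beta}_{0^+}x=-q(t)f(x)-\mathrm{sgn}(x)g(t)$ from \eqref{btdk}; after the same Gamma simplification the surviving monomials are exactly those in \eqref{hptvp}.

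For the regularity claims, $y_1=I^{n-\alpha}_{0^+}w$ is continuous, and its derivative $y_2$ is locally integrable at $0$ because its only singular monomials are $t^{\,n-\alpha-1}$ and $t^{\,n-\beta-1}$, whose exponents lie in $(-1,0]$; hence $y_1(t)=y_1(0)+\int_0^t y_2(s)\,ds$ and $y_1\in AC[0,\infty)$. The continuity of $y_i$ on $(0,\infty)$ for $i\ge2$ reduces to the continuity of the lower-order Caputo derivatives $^C D^{\alpha-n+i-1}_{0^+}x$ and $^C D^{\beta-n+i-1}_{0^+}x$ together with the obvious continuity of the power functions; since $x$ is continuously Caputo $\alpha$-differentiable and $\alpha-n+i-1\le\alpha-1<\alpha$, the downward-closure property of Caputo differentiability (the same fact from \cite{Cong} invoked for Lemma~\ref{bdss1}) guarantees these derivatives exist and are continuous.

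Finally, for the monotonicity, suppose $x$ is eventually positive. In the expression \eqref{hptvp} for $y_n'$ one has $-q(t)f(x(t))<0$ by (G1) and (H), while the correction monomials are $O(t^{\,n-1-\beta})$. Since (K) gives $g(t)\ge c_4 t^{\,n-\beta-\sigma}$ with $\sigma<1$ and $\alpha-\beta>\sigma-1$, the exponent $n-\beta-\sigma$ strictly exceeds both $n-1-\beta$ and $n-1-\alpha$, so the term $-\mathrm{sgn}(x)g(t)=-g(t)$ dominates every correction monomial; thus $y_n'(t)<0$ for all large $t$, i.e.\ $y_n$ is eventually decreasing. The monotonicity of the remaining $y_i$ then follows by a downward induction: a monotone function on $[T,\infty)$ is eventually of one sign, so from $y_i'=y_{i+1}$ and the eventual sign-definiteness of $y_{i+1}$ we conclude that $y_i$ is eventually monotone, for $i=n-1,n-2,\dots,1$. \textbf{The main obstacle} is the careful justification of the repeated differentiation identity $y_1^{(i-1)}=\,^{RL}D^{\alpha-n+i-1}_{0^+}w$ under the sole hypothesis that $x$ is continuously Caputo $\alpha$-differentiable, together with the verification that the Gamma-product coefficients in \eqref{x} and \eqref{hptvp} are precisely the correct ones; everything after that is a comparison of powers of $t$.
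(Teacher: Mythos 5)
Your proposal is correct and follows essentially the same route as the paper: both rest on the explicit differentiation of $I^{n-\alpha}_{0^+}$ applied to $x + a\,I^{\alpha-\beta}_{0^+}x$, the same Riemann--Liouville-to-Caputo correction by Taylor monomials with the telescoped Gamma coefficients, the same appeal to the downward-closure of Caputo differentiability for the continuity of $y_2,\dots,y_n$, and the same comparison of the exponent $n-\beta-\sigma$ against $n-\beta-1$ (using $\sigma<1$) to get $y_n'<0$ and then eventual monotonicity of the remaining $y_i$ by downward induction. The paper merely carries out the induction $y_i'=y_{i+1}$ term by term with the operators written out as $D^{(i)}I^{\,i-\gamma}_{0^+}$ rather than naming them Riemann--Liouville derivatives and invoking the semigroup law, so your formulation is a more compact bookkeeping of the identical argument.
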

\begin{proof}
    By a simple computation, for $k \in \mathbb N \cup \{0\}$,  we have
    \begin{align*}
        I^{n-\alpha}_{0^+}t^k = &\frac{1}{\Gamma(n-\alpha)}\int_0^t (t-s)^{n-\alpha-1}s^kds \\
        &= \frac{1}{\Gamma(n-\alpha)}\int_0^1t^{n-\alpha-1}(1-u)^{n-\alpha-1}t^ku^ktdu\\
        &=\frac{1}{\Gamma(n-\alpha)} t^{n-\alpha+k}B(k+1,n-\alpha) \\
        &= \frac{k!}{\Gamma(n-\alpha+1+k)}t^{n-\alpha+k},\;t>0,
    \end{align*}
  where $B(\cdot,\cdot)$ is the Beta function, and thus
    \begin{align*}
        I^{n-\beta}_{0^+}t^k = \frac{k!}{\Gamma(n-\beta+1+k)}t^{n-\beta+k},\; \forall k \in \mathbb N \cup \{0\},\; t>0.
    \end{align*}   
    To simplify notation, we use the symbol $D$  instead of $\frac{d}{dt}$. First, for any $t > 0$, then
    \begin{align*}
        y_1'(t) &= D\,I^{n-\alpha}_{0^+}x(t) + a\, D\, I^{n-\beta}_{0^+}x(t) \\
        &= D\, I^{1-(\alpha-n+1)}_{0^+}(x(t)-x_0) + a\,  D\,I^{1-(\beta-n+1)}_{0^+}(x(t)-x_0) \\
        &\hspace{7cm}+ D\left( I^{n-\alpha}_{0^+}x_0 + a\,I^{n-\beta}_{0^+}x_0 \right)\\
        &={^CD^{\alpha-n+1}}_{0^+}x(t) + a \,^C D^{\beta-n+1}_{0^+}x(t)+\frac{a_0}{\Gamma(n-\alpha)}t^{n-\alpha-1}+\frac{a\,a_0}{\Gamma(n-\beta)}t^{n-\beta-1}\\
        &= y_2(t).
    \end{align*}
    Next, by direct computation, it is not difficult to check that
    \begin{align*}
        y_2'(t) =&D\left(^C D^{\alpha-n+1}_{0^+}x(t)\right) + a\, D\left(^C D^{\beta-n+1}_{0^+}x(t)\right) \\
        &\hspace{5cm} +\frac{x_0(n-\alpha-1)}{\Gamma(n-\alpha)}t^{n-\alpha-2} +\frac{ax_0(n-\beta-1)}{\Gamma(n-\beta)}t^{n-\beta-2}\\
        =& D\left(D I^{1-(\alpha-n+1)}_{0^+}(x(t)-x_0)\right) + a\, D\left(D I^{1-(\beta-n+1)}_{0^+}(x(t)-x_0)\right)\\
        &\hspace{5cm} +\frac{x_0(n-\alpha-1)}{\Gamma(n-\alpha)}t^{n-\alpha-2} +\frac{ax_0(n-\beta-1)}{\Gamma(n-\beta)}t^{n-\beta-2}\\
        =&D^{(2)}\left(I^{2-(\alpha-n+2)}_{0^+}(x(t)-x_0-x_1t)\right) + a\,D^{(2)}\left(I^{2-(\beta-n+2)}_{0^+}(x(t)-x_0-x_1t) \right)\\
        &+D^{(2)}\left(I^{n-\alpha}_{0^+}a_1t\right)+ a\,D^{(2)}\left(I^{n-\beta}_{0^+}x_1t\right)+\frac{x_0(n-\alpha-1)}{\Gamma(n-\alpha)}t^{n-\alpha-2} +\frac{aa_0(n-\beta-1)}{\Gamma(n-\beta)}t^{n-\beta-2}\\
        =&^C D^{\alpha-n+2}_{0^+}x(t) + a\,^C D^{\beta-n+2}_{0^+}x(t) + \frac{x_1}{\Gamma(n-\alpha)}t^{n-\alpha-1} +  \frac{ax_1}{\Gamma(n-\beta)}t^{n-\beta-1}\\
        &\hspace{4cm} +\frac{x_0(n-\alpha-1)(n-\alpha)}{\Gamma(n-\alpha+1)}t^{n-\alpha-2} +\frac{ax_0(n-\beta-1)(n-\beta)}{\Gamma(n-\beta+1)}t^{n-\beta-2}\\
        =& {^CD^{\alpha-n-1+3}}_{0^+}x(t) + a\, ^C D^{\beta-n-1+3}_{0^+}x(t)\\
        &+ \sum_{k=0}^{(3-2)}\frac{x_k\prod_{j=0}^{3-2-k}(n-\alpha-3+2+k+j)}{\Gamma(n-\alpha+1)}t^{n-\alpha-3+1+k} \\ &+\sum_{k=0}^{(3-2)}\frac{ax_k\prod_{j=0}^{3-2-k}(n-\beta-3+2+k+j)}{\Gamma(n-\beta+1)}t^{n-\beta-3+1+k} \\
        =&x_3(t)\;\;\text{for all}\;t>0.
    \end{align*}
    Using the same arguments, for $i= 3, \ldots, n-1$ and $t>0$, we also obtain
    \begin{align*}
        y_i'(t) =&D\, \left(^C D^{\alpha-n-1+i}_{0^+}x(t)\right) + a D\,\left(^C D^{\beta-n-1+i}_{0^+}x(t)\right) \\
        &+ \sum_{k=0}^{i-2}\frac{x_k(n-\alpha-i+1+k)\prod_{j=0}^{i-2-k}(n-\alpha-i+2+k+j)}{\Gamma(n-\alpha+1)}t^{n-\alpha-i+k} \\
        &+\sum_{k=0}^{i-2}\frac{ax_k(n-\beta-i+1+k)\prod_{j=0}^{i-2-k}(n-\beta-i+2+k+j)}{\Gamma(n-\beta+1)}t^{n-\beta-i+k}\\
        =&D\left( D^{(i-1)}I^{(i-1-(\alpha-n-1+i))}\left(x(t) - \sum_k^{i-2}\frac{x_k}{k!}t^k\right) \right) \\
        &\hspace{3cm}+ a\,D\left( D^{(i-1)}I^{(i-1-(\beta-n-1+i))}\left(x(t) - \sum_k^{i-2}\frac{x_k}{k!}t^k\right) \right)\\
        &+\sum_{k=0}^{i-2}\frac{x_k(n-\alpha+k-i+1)\prod_{j=1}^{i-1-k}(n-\alpha-i+1+k+j)}{\Gamma(n-\alpha+1)}t^{n-\alpha-i+k}\\
        &+\sum_{k=0}^{i-2}\frac{ax_k(n-\beta+k-i+1)\prod_{j=1}^{i-1-k}(n-\beta-i+1+k+j)}{\Gamma(n-\beta+1)}t^{n-\beta-i+k}\\
        =& D^{(i)} I^{i - (\alpha-n + i)}_{0^+}\left(x(t)-\sum_{k=0}^{i-1}\frac{x_k}{k!}t^k\right) + a D^{(i)} I^{i - (\beta-n + i)}_{0^+}\left(x(t)-\sum_{k=0}^{i-1}\frac{x_k}{k!}t^k\right)\\
        &\hspace{3cm}+ D^{(i)}\left(I^{n-\alpha}_{0^+}\left(\frac{x_{i-1}}{(i-1)!}t^{i-1} \right) +I^{n-\beta}_{0^+}\left(\frac{ax_{i-1}}{(i-1)!}t^{i-1} \right)\right) \\
        &+\sum_{k=0}^{i-2}\frac{x_k\prod_{j=0}^{i-1-k}(n-\alpha-i+1+k+j)}{\Gamma(n-\alpha+1)}t^{n-\alpha-i+k}\\
        &\hspace{3cm}+ \sum_{k=0}^{i-2}\frac{ax_k\prod_{j=0}^{i-1-k}(n-\beta-i+1+k+j)}{\Gamma(n-\beta+1)}t^{n-\beta-i+k}\\
        =&^C D^{\alpha-n + i}_{0^+}x(t) + a ^C D^{\beta-n + i}_{0^+}x(t) + \frac{x_{i-1}}{\Gamma(n-\alpha)}t^{n-\alpha-1} + \frac{ax_{i-1}}{\Gamma(n-\beta)}t^{n-\beta-1}\\
        &+\sum_{k=0}^{i-2}\frac{x_k\prod_{j=0}^{i-1-k}(n-\alpha-i+1+k+j)}{\Gamma(n-\alpha+1)}t^{n-\alpha-i+k}\\
        &+ \sum_{k=0}^{i-2}\frac{ax_k\prod_{j=0}^{i-1-k}(n-\beta-i+1+k+j)}{\Gamma(n-\beta+1)}t^{n-\beta-i+k} \\
        =&^C D^{\alpha-n + i}_{0^+}x(t) + a ^C D^{\beta-n + i}_{0^+}x(t) +  \sum_{k=0}^{i-1}\frac{x_k\prod_{j=0}^{i-1-k}(n-\alpha-i+1+k+j)}{\Gamma(n-\alpha+1)}t^{n-\alpha-i+k}\\
        &\hspace{3cm}+\sum_{k=0}^{i-1}\frac{ax_k\prod_{j=0}^{i-1-k}(n-\beta-i+1+k+j)}{\Gamma(n-\beta+1)}t^{n-\beta-i+k}\\
        =&^C D^{\alpha-n -1 + (i+1)}_{0^+}x(t)+ a ^C D^{\beta-n -1 + (i+1)}_{0^+}x(t) \\ &+\sum_{k=0}^{(i+1)-2}\frac{x_k\prod_{j=0}^{(i+1)-2-k}(n-\alpha-(i+1)+2+k+j)}{\Gamma(n-\alpha+1)}t^{n-\alpha-(i+1)+1+k}\\
        &+\sum_{k=0}^{(i+1)-2}\frac{ax_k\prod_{j=0}^{(i+1)-2-k}(n-\beta-(i+1)+2+k+j)}{\Gamma(n-\beta+1)}t^{n-\beta-(i+1)+1+k}\\
        =&x_{i+1}(t).
    \end{align*}
    Finally, for $t > 0$, from the definition of $y_n(\cdot)$ and the observations shown
    above, then 
    \begin{align}\label{23}
        y_n'(t)=&^C D^{\alpha}_{0^+}x(t) + a ^C D^\beta_{0^+}x(t) + \sum_{k=0}^{n-1}\frac{x_k\prod_{j=0}^{n-1-k}(1-\alpha+k+j)}{\Gamma(n-\alpha+1)}t^{-\alpha+k}\nonumber\\
        &+\sum_{k=0}^{n-1}\frac{ax_k\prod_{j=0}^{n-1-k}(1-\beta+k+j)}{\Gamma(n-\beta+1)}t^{-\beta+k}\nonumber\\
        =&-q(t)f(x(t))-\mathrm{sgn}(x(t))g(t)+ \sum_{k=0}^{n-1}\frac{x_k\prod_{j=0}^{n-1-k}(1-\alpha+k+j)}{\Gamma(n-\alpha+1)}t^{-\alpha+k}\nonumber\\
        &+\sum_{k=0}^{n-1}\frac{ax_k\prod_{j=0}^{n-1-k}(1-\beta+k+j)}{\Gamma(n-\beta+1)}t^{-\beta+k},
    \end{align}
    which implies that \eqref{hptvp} is verified for all 
    $t > 0$. On the other hand, due to $\beta-n-1+i< \alpha-n-1+i \leq \alpha-1 \leq n-1$ for $i=2,3, \ldots,n$ and $x \in C^{n-1}[0,\infty)$, it follows from \cite[Theorem 8]{Cong} that $^C D^{\alpha-n-1+i}_{0^+}x$, $^C D^{\beta-n-1+i}_{0^+}x \in C[0,\infty)$, $i = 2, 3, \ldots, n$. Therefore, $y_i \in C(0,\infty)$, $i = 2,3,\ldots,n$. Notice that by $y_1'(t) = y_2(t)$ for all $t > 0$, $y_2(\cdot) \in C(0,\infty)$ and $y_2(\cdot)$ is integrable on finite subintervals of $[0,\infty)$, we conclude that $y_1 \in AC[0,\infty)$.

    Now, under the added assumptions $\textup{(G1), (H), (K)}$ and suppose that $x(\cdot)$ is eventually positive, we can find a parameter $t_0 > 0$ such that $x(t) >0$, $f(x(t)) > 0$, $q(t) >0$ and $g(t) \geq c_4t^{n-\beta-\sigma}$ for all $t \geq t_0$. From this together with the fact $-\alpha + k < -\beta + k \leq n-\beta-1 < 0$ for all $k=0,1,2,\ldots,n-1$, there are constants $M>0$ and $t_1 > t_0$ so that
    \begin{align}\label{24}
        &\sum_{k=0}^{n-1}\frac{x_k\prod_{j=0}^{n-1-k}(1-\alpha+k+j)}{\Gamma(n-\alpha+1)}t^{-\alpha+k}\notag\\
        &\hspace{2cm}+\sum_{k=0}^{n-1}\frac{ax_k\prod_{j=0}^{n-1-k}(1-\beta+k+j)}{\Gamma(n-\beta+1)}t^{-\beta+k}\notag\\
        &\hspace{1cm}\leq Mt^{n-\beta-1},\;\forall t \geq t_1.
    \end{align}
    Let $T = \max\left\{\left(\frac{M}{c_4}\right)^{\frac{1}{1-\sigma}}, t_1\right\}$, then 
    \begin{align}\label{25}
        Mt^{n-\beta-1} < c_4t^{n-\beta-\sigma} < g(t),\;\forall t>T.
    \end{align}
    By combining \eqref{23}, \eqref{24}, \eqref{25} and the fact $\mathrm{sgn}(x(t)) = 1$, $q(t)f(x(t)) > 0$ for all $t > T$ yields $y'_n(t) < 0$ on $[T,\infty)$. This together with the relation $y'_i(t) = y_{i+1}(t)$, $t>0$, $i=1,2,\ldots,n-1$ leads to that $y_n(\cdot)$ is monotonically decreasing and $y_i$, $i=1,2,\ldots,n-1$ are monotonic on $[T,\infty)$. The proof finishes.
\end{proof}
\begin{lemma}\label{bd3}
    Consider the equation \eqref{btdk}. Suppose that $\textup{(G1), (H), (K)}$ are true. If $x(\cdot)$ is an eventually positive solution and $y_1(\cdot)$ is eventually negative, then $\lim_{t \to \infty} y_1(t) = 0$ and $y_2(\cdot)$ is eventually positive.
\end{lemma}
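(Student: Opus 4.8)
The plan is to combine the integral representation of $y_1$ with the monotonicity structure furnished by Lemma \ref{bd2}, deducing $\lim_{t\to\infty}y_1(t)=0$ from a lower bound and then reading off the sign of $y_2 = y_1'$. To begin, under (G1), (H), (K) and the eventual positivity of $x$, Lemma \ref{bd2} guarantees that on some interval $[T,\infty)$ the functions $y_1,\dots,y_{n-1}$ are monotonic and $y_n$ is decreasing; I will use this monotonicity only at the final step, when I settle the sign of $y_2$.

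The heart of the matter is to prove $y_1(t)\to 0$ by bounding $y_1$ from below. Using the first representation in \eqref{x}, write $y_1(t) = I^{n-\alpha}_{0^+}x(t) + a\,I^{n-\beta}_{0^+}x(t)$ and choose $T'$ with $x(s) > 0$ for $s \ge T'$. For the genuine fractional integral $I^{n-\beta}_{0^+}x$ (here $n-\beta \in (0,1)$) I split $\int_0^t = \int_0^{T'} + \int_{T'}^t$; since $n-\beta-1 < 0$ one has $(t-s)^{n-\beta-1} \le (t-T')^{n-\beta-1}$ for $s\in[0,T']$, so the initial-segment contribution is at most $\frac{1}{\Gamma(n-\beta)}(t-T')^{n-\beta-1}\int_0^{T'}|x(s)|\,ds$, which tends to $0$, while the tail $\int_{T'}^t(t-s)^{n-\beta-1}x(s)\,ds$ is nonnegative. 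The term $I^{n-\alpha}_{0^+}x$ is treated identically when $\alpha<n$, and equals $x(t)>0$ for large $t$ when $\alpha=n$. Hence $y_1(t)$ is bounded below by a quantity that vanishes at infinity, so $\liminf_{t\to\infty}y_1(t)\ge 0$; combined with $y_1(t)<0$ (that is, $\limsup_{t\to\infty}y_1(t)\le 0$) this forces $\lim_{t\to\infty}y_1(t)=0$.

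It remains to determine the sign of $y_2$. Being eventually monotonic, negative, and convergent to $0$, the function $y_1$ cannot be eventually decreasing, for a decreasing function tending to $0$ would stay positive; hence $y_1$ is eventually strictly increasing and $y_2 = y_1' \ge 0$ on $[T,\infty)$. To upgrade this to $y_2(t)>0$ for large $t$, I would invoke the monotonicity of $y_2$ from Lemma \ref{bd2}: $y_2$ cannot vanish on any subinterval, since there $y_1$ would be constant, contradicting the strict increase of $y_1$ towards $0$; and a monotonic nonnegative function that is not eventually zero is eventually positive.

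I expect the principal difficulty to lie in the lower bound of the second paragraph: one must estimate the singular kernels $(t-s)^{n-\alpha-1}$ and $(t-s)^{n-\beta-1}$ uniformly, cleanly separate the fixed interval $[0,T']$ — on which $x$ may change sign — from the tail on which $x>0$, and verify that the initial-segment contributions genuinely decay to $0$. A secondary point requiring care is the passage from $y_2 \ge 0$ to strict positivity, which rests on the monotonicity of $y_2$ guaranteed by Lemma \ref{bd2} rather than on any purely formal argument.
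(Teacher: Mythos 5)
Your argument is correct and takes essentially the same route as the paper's proof: split $I^{n-\alpha}_{0^+}x$ and $I^{n-\beta}_{0^+}x$ at a point beyond which $x>0$, bound the initial-segment contributions from below by a constant times $-(t-T')^{n-\alpha-1}$ and $-(t-T')^{n-\beta-1}$ (your ``at most'' should read ``at least the negative of'', but the lower bound you ultimately invoke is the right one), discard the nonnegative tails to get $\liminf_{t\to\infty}y_1(t)\geq 0$, and then use the monotonicity supplied by Lemma \ref{bd2} to force $y_1$ to be increasing and $y_2$ eventually positive. Your extra care in upgrading $y_2\geq 0$ to $y_2>0$ via the monotonicity of $y_2$ is a small refinement of the paper's proof, which asserts that final step directly.
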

\begin{proof}
    Let $x(\cdot)$ be a solution of the equation  \eqref{btdk} on $[0,\infty)$. Assume that it is eventually positive and $y_1(\cdot)$ is eventually negative. From this, there is a $t_0 > 0$ large enough such  $x(t) > 0$, $y_1(t) < 0$ for all $t \geq t_0$. Taking $M: = \int_0^{t_0}|x(s)|ds$. For any $t > t_0$, it is easy to see  
    \begin{align}\label{26}
        I^{n-\alpha}_{0^+}x(t) &= \frac{1}{\Gamma(n-\alpha)}\int_{0}^t (t-s)^{n-\alpha-1}x(s)ds\nonumber\\
        & = \frac{1}{\Gamma(n-\alpha)}\int_{0}^{t_0} (t-s)^{n-\alpha-1}x(s)ds + \frac{1}{\Gamma(n-\alpha)}\int_{t_0}^t (t-s)^{n-\alpha-1}x(s)ds \nonumber\\
        &\geq \frac{1}{\Gamma(n-\alpha)}\int_{0}^{t_0} (t-s)^{n-\alpha-1}x(s)ds.
    \end{align}
    Notice that $(t-s)^{n-\alpha-1}x(s) \geq -(t-t_0)^{n-\alpha-1}|x(s)| $ for all $s \in [0,t_0]$ and $t>t_0$. This together with \eqref{26} lead to 
    \begin{align}\label{27}
        I^{n-\alpha}_{0^+}x(t) \geq \frac{-M}{\Gamma(n-\alpha)}(t-t_0)^{n-\alpha-1}, \; \forall t > t_0.
    \end{align}
    By the same arguments, it is also true that 
    \begin{align}\label{28}
        I^{n-\beta}_{0^+}x(t) \geq \frac{-M}{\Gamma(n-\beta)}(t-t_0)^{n-\beta-1}, \; \forall t > t_0.
    \end{align}
    From \eqref{x}, \eqref{27} and \eqref{28}, we get
    \begin{align*}
       \frac{-M}{\Gamma(n-\alpha)}(t-t_0)^{n-\alpha-1} + \frac{-aM}{\Gamma(n-\beta)}(t-t_0)^{n-\beta-1} \leq x_1(t) < 0, \; \forall t > t_0,
    \end{align*}
    and thus $\lim_{t\to\infty} x_1(t) = 0$. On the other hand, it follows from Lemma \ref{bd2} that $y_1(\cdot)$ is strictly monotonic on $[T,\infty)$ for some $T>0$ large enough. Thus $y_1(\cdot)$ is strictly increasing on that interval. Due to $y'_1(t) =y_2(t)$ for all $t > 0$, we conclude that $y_2(\cdot)$ is eventually positive.
\end{proof}
\begin{lemma}\label{bd4}
Suppose that $\textup{(G1), (G2), (H), (K)}$ hold. Let $x(\cdot)$ be an eventually positive solution of \eqref{btdk} such that $y_n(\cdot)$ is also eventually positive, then 
\begin{align*}
    \limsup_{t\to \infty}x(t) < \infty.
\end{align*}
\end{lemma}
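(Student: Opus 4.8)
The plan is to deduce a finite bound on $\int_T^\infty q(s)f(x(s))\,\du s$ from the boundedness of $y_n$, and then to convert this integral control into a pointwise bound on $x$ by means of \textup{(G2)}. First I would invoke Lemma~\ref{bd2}: since \textup{(G1), (H), (K)} hold and $x$ is eventually positive, there is a $T>0$ (which I take large enough that all the ``eventually'' statements below are already in force on $[T,\infty)$) such that $y_n(\cdot)$ is monotonically decreasing on $[T,\infty)$. As $y_n$ is by hypothesis eventually positive, it is bounded and monotone, hence it converges to some finite $L\geq 0$ and $0<y_n(t)\leq y_n(T)$ for all $t\geq T$.

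Next I would integrate the differential relation for $y_n$ in \eqref{hptvp} over $[T,t]$. Writing $R(s)$ for the lower-order polynomial terms occurring in $y_n'$ and using $\mathrm{sgn}(x(s))=1$ for $s\geq T$, this gives
\[
\int_T^t\big(q(s)f(x(s))+g(s)-R(s)\big)\,\du s=y_n(T)-y_n(t)\leq y_n(T).
\]
The decisive quantitative input is the growth comparison already exploited in \eqref{24}--\eqref{25}: one has $|R(s)|\leq M\,s^{n-\beta-1}$, whereas \textup{(K)} gives $g(s)\geq c_4\,s^{n-\beta-\sigma}$ with $\sigma<1$, so that $n-\beta-\sigma>n-\beta-1$ and therefore $g(s)-R(s)\geq \tfrac{c_4}{2}\,s^{n-\beta-\sigma}>0$ for all large $s$. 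Since in addition $q(s)f(x(s))\geq0$ by \textup{(G1)} and \textup{(H)}, the integrand is nonnegative on $[T,\infty)$, so the left-hand side is nondecreasing and bounded by $y_n(T)$; in particular $\int_T^\infty q(s)f(x(s))\,\du s<\infty$, and since $q(s)\geq c_3>0$ this yields $\int_T^\infty f(x(s))\,\du s<\infty$. (I note in passing that applying the same bound to the nonnegative term $g-R$ alone already puts a severe constraint on the regime in which this lemma is non-trivial, which is why the growth orders must be watched so carefully.)

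Finally I would argue by contradiction: if $\limsup_{t\to\infty}x(t)=+\infty$ then, by \textup{(G2)}, $f(x(s))\geq c_1\,x(s)^\eta$ whenever $x(s)\geq1$, so the excursions of $x$ above height $1$ would force $\int_T^\infty f(x)\,\du s$ to diverge, contradicting the previous step and establishing $\limsup_{t\to\infty}x(t)<\infty$. I expect the main obstacle to be exactly this last implication: integrability of $f(x)$ rules out $x$ being large only if one can exclude arbitrarily tall but vanishingly thin spikes. The tool I would use to close this gap is the a~priori growth control on $x$ inherited from the boundedness of $y_n$: integrating the chain $y_i'=y_{i+1}$ upward gives $y_{n-1}(t)=O(t),\dots,y_1(t)=O(t^{n-1})$, and since $y_1=I^{n-\alpha}_{0^+}\big(x+a\,I^{\alpha-\beta}_{0^+}x\big)$, this caps the growth of a fractional integral of $x$ and thereby limits how rapidly $x$ may increase. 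Turning that growth cap into a lower bound on the width of the high excursions of $x$ — so that each excursion above a large level contributes a definite amount to $\int f(x)$ — is the delicate quantitative point, and it is where I expect the genuine work of the proof to lie.
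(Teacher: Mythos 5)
Your core computation is the same as the paper's: the proof in the text also uses \eqref{23}--\eqref{25} to get $y_n'(t)<-q(t)f(x(t))$ for large $t$, integrates, and uses the positivity of $y_n$ to conclude $\int_{t_1}^{\infty}f(x(s))\,ds<\infty$. The genuine gap is exactly the one you flag and then leave open: integrability of the continuous positive function $f(x(\cdot))$ does not by itself bound $x$, since nothing yet excludes arbitrarily tall excursions of vanishing width, and the repair you sketch (pushing the bound $y_1(t)=O(t^{n-1})$ through $y_1=I^{n-\alpha}_{0^+}\bigl(x+a\,I^{\alpha-\beta}_{0^+}x\bigr)$ to control the width of spikes) is announced but not carried out. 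So as written the proposal does not complete the proof. You should be aware, though, that the paper's proof makes precisely the inference you refused: it asserts that $\int_{t_1}^{\infty}f(x(s))\,ds<\infty$ ``implies that $f(x(\cdot))$ is bounded on $[t_1,\infty)$,'' which is not a valid deduction for a merely continuous integrand. Your scruple identifies a real soft spot in the published argument, not a defect peculiar to your own.

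Ironically, your parenthetical remark already contains the shortest correct way to finish, and you did not pursue it. Since $n-1<\beta<n$ and $\sigma\in(n-\beta,1)$, the exponent $n-\beta-\sigma$ lies in $(-1,0)$, so $s^{n-\beta-\sigma}$ is \emph{not} integrable at infinity; combining (K) with \eqref{24}--\eqref{25} gives $g(s)-R(s)\geq\tfrac{c_4}{2}s^{n-\beta-\sigma}$ for large $s$, hence $\int_T^t\bigl(g(s)-R(s)\bigr)\,ds\to+\infty$. Your displayed identity then forces $y_n(t)=y_n(T)-\int_T^t\bigl(q(s)f(x(s))+g(s)-R(s)\bigr)\,ds\to-\infty$, contradicting the hypothesis that $y_n$ is eventually positive. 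In other words, under (G1), (H), (K) an eventually positive solution cannot have $y_n$ eventually positive, the hypotheses of Lemma \ref{bd4} are never simultaneously satisfiable, and the conclusion holds vacuously (which still suffices to exclude the configuration $y_i>0$ for all $i$ in Lemma \ref{bd6}). One more line along the track you opened would have closed the argument without any spike analysis at all.
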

\begin{proof}
    Because $\textup{(F1), (F2), (Q), (G)}$ are true, $x(\cdot), y_n(\cdot)$ are eventually positive, there is a $t_0 > 0$ so that $x(t) >0$, $y_n(t) > 0$, $f(x(t)) > 0$, $q(t) \geq c_3 >0$ and $ g(t) \geq c_4t^{n-\beta-\sigma}$ for all $t \geq t_0$. From this, by \eqref{23}, \eqref{24} and \eqref{25}, we can find $t_1 > t_0$ so that 
    \begin{align*}
        y_n'(t) < -q(t)f(x(t)), \; \forall t > t_1.
    \end{align*}
  Moreover, 
    \begin{align*}
        y_n(t_1) &> y_n(t_1) -y_n(t) = -\int_{t_1}^t y'_n(s)ds \nonumber\\
        &>\int_{t_1}^t q(s)f(x(s))ds \nonumber\\
        &\geq c_3 \int_{t_1}^t f(x(s))ds,\; \forall t > t_1.
    \end{align*}
    Letting $t \to \infty$, then 
    \begin{equation}\label{Tuan_add1}
        \int_{t_1}^\infty f(x(s))ds < \frac{y_n(t_1)}{c_3}<\infty,
    \end{equation}
    which implies that $f(x(\cdot))$ is bounded on $[t_1, \infty)$. This together with $\textup{(G2)}$ shows
    that $x(\cdot)$ is also bounded on $[t_1, \infty)$ and thus
    \begin{align*}
        \limsup_{t\to \infty} x(t) < \infty.
    \end{align*}
\end{proof}
\begin{lemma}\label{bd5}
    Let $x(\cdot)$ be an eventually positive solution of \eqref{btdk} satisfying
    \begin{align*}
        y_2(t) \geq c_5 > 0,\; \forall t \geq T_3 >0.
    \end{align*}
    Then, 
    \begin{align*}
        \limsup_{t \to \infty}x(t) = \infty.
    \end{align*}
\end{lemma}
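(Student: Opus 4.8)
The plan is to play the (at least) linear growth of $y_1$ forced by the hypothesis $y_2 \ge c_5 > 0$ against the sublinear growth that boundedness of $x$ would impose on $y_1$ through its definition as a combination of Riemann--Liouville integrals. First I record that, by Lemma \ref{bd2}, $y_1'(t) = y_2(t)$ for all $t > 0$ and $y_1 \in AC[0,\infty)$. Since $y_2(t) \ge c_5$ for $t \ge T_3$, integrating the identity $y_1' = y_2$ over $[T_3, t]$ gives
$$
y_1(t) \ge y_1(T_3) + c_5(t - T_3), \qquad t \ge T_3,
$$
so that $\liminf_{t\to\infty} y_1(t)/t \ge c_5 > 0$; in particular $y_1(t)\to\infty$ at least linearly.

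Next I argue by contradiction and assume $\limsup_{t\to\infty} x(t) < \infty$. Combined with the eventual positivity of $x$, this yields a constant $K > 0$ and a $t_0 > 0$ with $0 < x(t) \le K$ for $t \ge t_0$, while $|x|$ stays bounded on $[0,t_0]$ by continuity. Writing $y_1(t) = I^{n-\alpha}_{0^+}x(t) + a\,I^{n-\beta}_{0^+}x(t)$ and recalling that $n-1 < \beta < \alpha \le n$ forces $n-\alpha \in [0,1)$ and $n-\beta \in (0,1)$, I estimate each integral by splitting $\int_0^t = \int_0^{t_0} + \int_{t_0}^t$. On $[0,t_0]$ the kernel $(t-s)^{n-\alpha-1}$ (with exponent in $[-1,0)$) is bounded by $(t-t_0)^{n-\alpha-1}\to 0$, so that contribution is bounded; on $[t_0,t]$ the bound $x \le K$ gives $\int_{t_0}^t (t-s)^{n-\alpha-1}x(s)\,ds \le \tfrac{K}{n-\alpha}(t-t_0)^{n-\alpha}$. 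Hence $I^{n-\alpha}_{0^+}x(t) = O(t^{n-\alpha})$ and, by the identical computation, $I^{n-\beta}_{0^+}x(t) = O(t^{n-\beta})$.

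Finally, since $\beta < \alpha$ implies $n-\alpha < n-\beta < 1$, these two estimates combine to $y_1(t) = O(t^{n-\beta}) = o(t)$ as $t\to\infty$, which directly contradicts the linear lower bound $y_1(t) \ge y_1(T_3) + c_5(t-T_3)$ from the first step. Consequently the assumption $\limsup_{t\to\infty} x(t) < \infty$ is untenable, and $\limsup_{t\to\infty} x(t) = \infty$, as claimed.

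The argument is at bottom a growth mismatch, and I do not expect a genuine obstacle. The only point needing care is the fractional-integral estimate: handling the integrable singularity of the kernel near $s = t$, and the degenerate case $\alpha = n$, in which $I^{n-\alpha}_{0^+}$ is the identity so that $I^{n-\alpha}_{0^+}x(t) = x(t)$ is simply bounded. In every case both integrals remain $O(t^{n-\beta})$ with $n-\beta < 1$, which is precisely what the contradiction requires.
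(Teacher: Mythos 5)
Your proof is correct, but it runs along a genuinely different track from the paper's. The paper works one derivative up: it uses the identity ${}^C D^{n-\alpha}_{0^+}y_1(t)=x(t)+a\,I^{\alpha-\beta}_{0^+}x(t)$, bounds this \emph{above} by $x^*(t)\bigl(1+\tfrac{a}{\Gamma(\alpha-\beta)}t^{\alpha-\beta}\bigr)$ with $x^*(t)=\sup_{s\in[0,t]}x(s)$, and \emph{below} via the integral representation of the Caputo derivative (Lemma \ref{dhc}) applied to $y_2=y_1'$, which under $y_2\ge c_5$ yields a term of order $t^{\alpha-n+1}$; since $\alpha-n+1>\alpha-\beta$ (i.e.\ $\beta>n-1$), this forces $x^*(t)\to\infty$ directly, even with an explicit rate $t^{\beta-n+1}$. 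You instead stay at the level of $y_1$ itself: integrating $y_1'=y_2\ge c_5$ gives linear growth of $y_1$, while boundedness of $x$ would force $y_1=I^{n-\alpha}_{0^+}x+aI^{n-\beta}_{0^+}x=O(t^{n-\beta})=o(t)$, a contradiction. Both arguments ultimately exploit the same arithmetic fact ($n-\beta<1$), but yours is more elementary --- it avoids Lemma \ref{dhc} and the auxiliary function $x^*$ entirely --- at the cost of being a pure contradiction argument that yields no quantitative lower bound on how often or how fast $x$ must become large. Your handling of the endpoint case $\alpha=n$ (where $I^{n-\alpha}_{0^+}$ degenerates to the identity) is a point the paper's proof glosses over, so flagging it is a small bonus.
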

\begin{proof}
    From \eqref{x} and Lemma \ref{tddh}, we have 
    \begin{align}\label{37}
         ^C D^{n-\alpha}_{0^+}y_1(t) = x(t) + a I^{\alpha-\beta}_{0^+}x(t),\;\forall t>0.
    \end{align}
    Define $x^*(t) = sup_{s \in [0,t]}x(s)$ for each $t>0$, then 
    \begin{align}\label{38}
        x(t) + a I^{\alpha-\beta}_{0^+}x(t) &= x(t) + \frac{a}{\Gamma(\alpha-\beta)}\int_0^t(t-s)^{\alpha-\beta-1}x(s)ds \nonumber\\
        & \leq x^*(t) + \frac{ax^*(t)}{\Gamma(\alpha-\beta)}t^{\alpha-\beta} \nonumber\\
        &= x^*(t) \left( 1 + \frac{a}{\Gamma(\alpha-\beta)}t^{\alpha-\beta} \right),\; \forall t>0.
    \end{align}
   On the other hand, by the assumption of the lemma, there is a $t_0 > 0$ such that $x(t) > 0$, $y_2(t) \geq c_5 > 0$ for all $t \geq t_0$. 
    Define $M_1: = \int_0^{t_0} |y_2(s)|ds$. It follows from Lemma \ref{dhc} that 
    \begin{align}\label{39}
        ^C D^{n-\alpha}_{0^+}y_1(t) &= \frac{1}{\Gamma(1 + \alpha -n)}\int_0^t\frac{y_2(s)}{(t-s)^{n-\alpha}}ds \nonumber \\
        &=\frac{1}{\Gamma(1 + \alpha -n)}\left(\int_0^{t_0}\frac{y_2(s)}{(t-s)^{n-\alpha}}ds + \int_{t_0}^{t}\frac{y_2(s)}{(t-s)^{n-\alpha}}ds \right) \nonumber\\
        &\geq \frac{1}{\Gamma(1 + \alpha -n)}\left(\frac{-1}{(t-t_0)^{n-\alpha}}\int_0^{t_0}|x(s)|ds +\frac{c_5}{\alpha-n+1}(t-t_0)^{\alpha-n+1}\right)\nonumber \\
        & = \frac{1}{\Gamma(1 + \alpha -n)}\left(\frac{-M_1}{(t-t_0)^{n-\alpha}} + \frac{c_5}{\alpha-n+1}(t-t_0)^{\alpha-n+1} \right),\;\forall t > t_0.
    \end{align}
    Using \eqref{37}--\eqref{39}, for $t > t_0$, then 
    \begin{align*}
        x^*(t) \left( 1 + \frac{a}{\Gamma(\alpha-\beta)}t^{\alpha-\beta} \right) \geq \frac{1}{\Gamma(1 + \alpha -n)}\left(\frac{-M_1}{(t-t_0)^{n-\alpha}} + \frac{c_5}{\alpha-n+1}(t-t_0)^{\alpha-n+1} \right),
    \end{align*}
    which gives
    \begin{align*}
        x^*(t) \geq \frac{\frac{1}{\Gamma(1 + \alpha -n)}\left(\frac{-M_1}{(t-t_0)^{n-\alpha}} + \frac{c_5}{\alpha-n+1}(t-t_0)^{\alpha-n+1} \right)}{1 + \frac{a}{\Gamma(\alpha-\beta)}t^{\alpha-\beta}}, \; \forall t > t_0,
    \end{align*}
    and thus  
    %$\alpha-\beta < \alpha -n+1$ nên
    $x^*(t) \to \infty$ as $t \to \infty$. In particular, it follows that
    \begin{align*}
        \limsup_{t \to \infty}x(t) = \infty.
    \end{align*}
\end{proof}
\begin{lemma}\label{bd6}
    Suppose that $\textup{(G1), (G2), (H), (K)}$ are true. The following statements hold.
    \begin{itemize}
        \item [(i)] If $n$ is odd, all eventually positive solutions of \eqref{btdk} satisfy
        \begin{align}\label{a}
            (-1)^{i+1}y_i(t) >0,\; i=1,2,3,\ldots,n,\,\,\,\text{for $t$ large enough}. 
        \end{align}
        \item [(ii)] If $n$ is even, all eventually positive solutions of \eqref{btdk} satisfy 
        \begin{align}\label{b}
            (-1)^{i}y_i(t) >0,\; i=1,2,3,\ldots,n,\,\,\,\text{for $t$ large enough},
        \end{align}
        or
        \begin{align}\label{c}
            y_1(t) > 0,\; y_2(t) > 0\,\,\,\text{and}\,\,\, (-1)^{i}y_i(t) >0,\; i=3,\ldots,n,\,\,\,\text{for $t$ large enough.}
        \end{align}
    \end{itemize}
\end{lemma}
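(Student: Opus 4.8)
The plan is to determine the eventual signs of $y_1,\dots,y_n$ by combining the structural information in Lemma~\ref{bd2} with the three output Lemmas~\ref{bd3}, \ref{bd4} and \ref{bd5}. By Lemma~\ref{bd2}, $y_n$ is eventually strictly decreasing and each $y_i$ ($i<n$) is eventually monotone; since $y_i'=y_{i+1}$, a $y_i$ whose successor has a definite nonzero sign is in fact eventually \emph{strictly} monotone, hence of one constant sign. I will produce these signs by a downward induction, so the nonvanishing that keeps the monotonicity strict is furnished by the induction hypothesis itself.

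First I would fix the anchor $y_n>0$. If instead $y_n<0$ eventually then, being strictly decreasing, $y_n\le -c<0$ for large $t$; integrating the chain $y_i'=y_{i+1}$ downward gives successively $y_{n-1}\to-\infty,\ y_{n-2}\to-\infty,\dots,\ y_1\to-\infty$, contradicting Lemma~\ref{bd3}, which forces $y_1\to 0$ when $x$ is eventually positive and $y_1$ eventually negative. Hence $y_n>0$, and Lemma~\ref{bd4} then supplies the global bound $\limsup_{t\to\infty}x(t)<\infty$.

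The core of the argument is a cascade dichotomy, valid for any index $j$ with $2\le j\le n-1$: (1) if $y_j\ge c>0$ for large $t$, integrating downward gives $y_2\to+\infty$, so Lemma~\ref{bd5} forces $\limsup_{t\to\infty}x(t)=\infty$, contradicting the bound above; (2) if $y_j\le -c<0$ for large $t$, the same downward integration gives $y_1\to-\infty$, again contradicting Lemma~\ref{bd3}. With these I run the downward induction establishing that $y_i$ is eventually of sign $(-1)^{n-i}$ for $i=n,n-1,\dots,2$. At each step $y_i'=y_{i+1}$ has the nonzero sign fixed at the previous stage, so $y_i$ is strictly monotone: if $y_{i+1}>0$ then $y_i$ increases, and were it eventually positive it would satisfy $y_i\ge c>0$, triggering cascade (1), whence $y_i<0$; dually, if $y_{i+1}<0$ then $y_i$ decreases, and were it eventually negative it would satisfy $y_i\le -c<0$, triggering cascade (2), whence $y_i>0$.

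It remains to read off $y_1$ and split by parity. Since $y_2$ is of sign $(-1)^{n-2}=(-1)^{n}$, the relation $y_1'=y_2$ makes $y_1$ eventually monotone. If $n$ is odd then $y_2<0$, so $y_1$ decreases; were $y_1$ eventually negative, Lemma~\ref{bd3} would give $y_1\to0$, impossible for a decreasing negative function, so $y_1>0$ and the alternation $(-1)^{i+1}y_i>0$ of \eqref{a} holds. If $n$ is even then $y_2>0$, so $y_1$ increases and \emph{both} signs are consistent with Lemma~\ref{bd3} (when $y_1<0$ it simply increases to $0^-$): $y_1<0$ yields the fully alternating pattern \eqref{b}, while $y_1>0$ yields \eqref{c}, the two conclusions differing only in the sign of $y_1$. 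The main obstacle is the bookkeeping in the cascade step — ensuring every $y_i$ has a definite nonzero eventual sign so that strict monotonicity, and hence the bounds $y_j\ge c$ or $y_j\le -c$, are genuinely available — and recognizing that the sole residual degree of freedom sits at $y_1$ and manifests exactly when $n$ is even.
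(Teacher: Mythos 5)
Your proof is correct, and it reaches the conclusion using exactly the same three auxiliary facts as the paper — Lemma \ref{bd3} (eventually negative $y_1$ must tend to $0$), Lemma \ref{bd4} ($y_n>0$ forces $\limsup x<\infty$), and Lemma \ref{bd5} ($y_2\geq c>0$ forces $\limsup x=\infty$) — but you organize them differently. The paper first proves a propagation lemma (if $y_{k-1}y_k>0$ eventually then $y_iy_{i+1}>0$ for all $i<k$), then enumerates six possible sign configurations \eqref{(1)}--\eqref{(6)} and eliminates three of them by the contradictions above; you instead anchor $y_n>0$ and run a downward induction in which, at each index $i$, the already-determined nonzero sign of $y_i'=y_{i+1}$ gives strict monotonicity of $y_i$, and the two ``cascade'' contradictions (upward-to-$+\infty$ through $y_2$ versus downward-to-$-\infty$ through $y_1$) exclude one of the two possible signs. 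Your cascades are precisely the paper's propagation lemma in disguise, so the mathematical content is identical; what your arrangement buys is that it dispenses with the explicit case list and makes it immediately visible that the signs of $y_2,\dots,y_n$ are completely forced and the sole residual degree of freedom is the sign of $y_1$, which is why the dichotomy \eqref{b}/\eqref{c} appears exactly when $n$ is even, while the paper's arrangement displays the classical Kiguradze-type classification structure more explicitly. One small point of care that you handle correctly but should keep explicit in a write-up: the dichotomy ``eventually positive or eventually negative'' for each $y_i$ is not free — it comes from the strict monotonicity supplied by the induction hypothesis (and, for $y_n$, from Lemma \ref{bd2}), and the lower bounds $y_j\geq c$ or $y_j\leq -c$ needed to launch a cascade likewise come from monotonicity, not merely from sign-definiteness.
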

\begin{proof}
    Let $x(\cdot)$ be an eventually positive solution of \eqref{btdk}. By Lemma \ref{bd2}, there is a $T>0$ such that $y'_n(t) < 0$ and $y_i$, $i=1,2,3,\ldots,n$, are strictly monotonic on $[T,\infty)$. From this, we can find $t_0 > T$ with $y'_n(t) < 0$ and $y_i(\cdot)$, $i=1,2,3,\ldots,n,$ do not change sign on $[t_0,\infty)$. We first point out that if there exits $k \in \{2,3, \ldots, n\}$ satisfying $y_{k-1}(t) y_k(t) > 0$ for all $t \geq t_0$, then  
    \begin{align*}
        y_i(t)y_{i+1}(t) > 0,\; i=1,2, \ldots, k-1,\;t \geq t_0.
    \end{align*}
    Indeed, without loss of generality, we assume $y_{k-1}(t) > 0$, $y_k(t) > 0$ for all $t \geq t_0$. By induction, we only need to prove $y_{k-2}(t) > 0$ for all $t \geq t_0$. Due to $y'_{k-1}(t) = y_k(t) > 0$ for all $t \geq t_0$, the function $y_{k-1}(\cdot)$ is positive and strictly increasing on $[t_0, \infty)$. This implies that 
    \begin{align*}
        y_{k-1}(t) \geq M_3, \quad t \geq t_1,
    \end{align*}
    for some $M_3>0$ and $t_1>t_0$ and thus
    \begin{align}\label{bs2}
        y_{k-2}(t) = \int_{t_1}^t y_{k-1}(s)ds + y_{k-2}(t_1) \geq M_3(t-t_1) +y_{k-2}(t_1),\;\forall t>t_1.
    \end{align}
    From \eqref{bs2},  $ y_{k-2}(t) > 0$ for all $t > \frac{M_3 t_1 -y_{k-2}(t_1)}{M_3}$ which together the fact $x_{k-2}(\cdot)$ does not change sign on $[t_0,\infty)$ gives $y_{k-2}(t) > 0$ for all $t \geq t_0.$

\noindent {\bf Case I:} there exits $k \in \{2,3, \ldots, n\}$ satisfying $y_{k-1}(t) y_k(t) > 0$ for all $t \geq t_0$. In this case, only the following possibilities occur.
    \begin{itemize}
        \item [(i)] If $k=n$, then 
            \begin{align}\label{(1)}
                y_i(t) > 0, \; t \geq t_0,\; i=1,2,\ldots,n,
            \end{align}
        or
            \begin{align}\label{(2)}
                y_i(t) < 0, \; t \geq t_0,\; i=1,2,\ldots,n.
            \end{align}
        \item [(ii)] If $2 \leq k \leq n-1$, then for $t \geq t_0$,
            \begin{align}\label{(3)}
                y_i(t) > 0,\;i=1,2,\ldots,k,\; y_{i}(t)y_{i+1}(t) < 0,\; i = k,\ldots,n-1,
            \end{align}
        or
            \begin{align}\label{(4)}
                y_i(t) < 0,\; i=1,2,\ldots,k,\; y_{i}(t)y_{i+1}(t) < 0,\;i = k,\ldots,n-1.
            \end{align}
    \end{itemize}

\noindent {\bf Case II:} there is not $k \in \{2,3, \ldots, n\}$ satisfying $y_{k-1}(t) y_k(t) > 0$ for all $t \geq t_0$. It is easy to check that then the following statement is true.
    \begin{itemize}
         \item [(iii)] 
            \begin{align}\label{(5)}
                (-1)^i y_i(t) > 0, \; t \geq t_0,\; i=1,2,\ldots,n,
            \end{align}
        or
            \begin{align}\label{(6)}
                (-1)^{i+1} y_i(t) > 0, \; t \geq t_0,\; i = 1, 2, \ldots, n.
            \end{align}
    \end{itemize}
    Based on Lemma \ref{bd3}, the possibilities \eqref{(2)} and \eqref{(4)} do not happen. According to Lemma \ref{bd4} and Lemma \ref{bd5}, the possibility \eqref{(1)} also does not happen. 

    Next, if $y_n(t) < 0$ for all $t \geq t_0$, by the same argument as in the proof of \eqref{bs2}, we claim that $y_{n-1}(t) < 0$ for all $t \geq t_0$. Thus, the possibilities \eqref{(3)}, \eqref{(5)} and \eqref{(6)} occur if and only if $y_n(t) > 0$ for all $t \geq 0$.

    Consider \eqref{(3)}. It follows from Lemma \ref{bd4} and Lemma \ref{bd5} that $k=2.$ Hence, $y_1(t)$, $y_2(t) > 0$, $(-1)^{i}y_i(t) > 0$ for all $t \geq t_0$, $i =3,\ldots,n.$ This combines with $y_n(t) >0$, $t>t_0$ yields $(-1)^n>0$ and thus $n$ is even. The statement \eqref{c} is checked. 

   Concerning with \eqref{(5)}, since $y_n(t) > 0$ for $t \geq t_0$, it implies that $n$  is even and thus the assertion \eqref{b} is true.

    For the case \eqref{(6)}, it is obvious that  $n$ is odd. The assertion \eqref{a} is verified.
\end{proof}
\begin{lemma}\label{bd7}
    Let $x(\cdot)$ be an eventually positive solution of \eqref{btdk}. Assume that one of the following assumptions is true.
    \begin{itemize}
        \item [(i)] For all $i=1,2,3,\ldots,n$,
        \begin{equation*}
            (-1)^{i+1}y_i(t) >0\,\,\,\text{for $t$ large enough}. 
        \end{equation*}
        \item [(ii)] For all $i=1,2,3\ldots,n$,
        \begin{equation*}
            (-1)^{i}y_i(t) >0 \,\,\,\text{for $t$ large enough},
        \end{equation*}
        \item [(iii)] $y_1(\cdot)$ is bounded. Moreover,
         \begin{equation*}
            y_1(t) > 0, y_2(t) > 0\,\,\,\text{and}\,\,\, (-1)^{i}y_i(t) >0,\; i=3,\ldots,n,\,\,\,\text{for $t$ large enough.}
        \end{equation*}
    \end{itemize}
    Then,
    \begin{equation*}
        x(t) = O(t^{\alpha-n})\,\,\,\text{as}\;t \to \infty.
    \end{equation*}
\end{lemma}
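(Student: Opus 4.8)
The plan is to reduce all three cases to a single integral bound for $x$ in terms of $y_2$, and then to feed in the sign/monotonicity information that each hypothesis supplies. First I would record the master identity: by the factored form of $y_1$ in \eqref{x} together with Lemma \ref{tddh} and Lemma \ref{dhc} (this is exactly the computation \eqref{37}--\eqref{39}, valid since $n-\alpha\in(0,1)$, $y_1\in AC[0,\infty)$ and $y_1'=y_2$ by Lemma \ref{bd2}),
\begin{equation*}
x(t)=\frac{1}{\Gamma(1+\alpha-n)}\int_0^t\frac{y_2(s)}{(t-s)^{n-\alpha}}\,ds-aI^{\alpha-\beta}_{0^+}x(t),\qquad t>0.
\end{equation*}
Since $x>0$ on some $[t_0,\infty)$, the contribution of $I^{\alpha-\beta}_{0^+}x$ over $[t_0,t]$ is nonnegative while its contribution over $[0,t_0]$ is bounded below by $-C(t-t_0)^{\alpha-\beta-1}$; discarding the beneficial nonnegative part yields $x(t)\le \Gamma(1+\alpha-n)^{-1}\int_0^t y_2(s)(t-s)^{-(n-\alpha)}\,ds+O(t^{\alpha-\beta-1})$ for $t>t_0$. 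Because $\beta>n-1$ forces $\alpha-\beta-1<\alpha-n$, the error term is already $O(t^{\alpha-n})$, so the whole problem collapses to showing the displayed integral is $O(t^{\alpha-n})$.

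For hypothesis (i) the sign pattern $(-1)^{i+1}y_i>0$ gives $y_2<0$ on $[t_0,\infty)$ (take $i=2$). Splitting the integral at $t_0$, the tail $\int_{t_0}^t$ is $\le 0$ and may be dropped, while the head obeys $\int_0^{t_0}|y_2(s)|(t-s)^{-(n-\alpha)}\,ds\le (t-t_0)^{-(n-\alpha)}\int_0^{t_0}|y_2(s)|\,ds=O(t^{\alpha-n})$, using that $y_2$ is integrable on $[0,t_0]$ (Lemma \ref{bd2}). Together with $x>0$ eventually this proves $x(t)=O(t^{\alpha-n})$ in case (i).

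For hypotheses (ii) and (iii) the pattern gives $y_2>0$ and $y_2'=y_3<0$ on $[t_0,\infty)$, so $y_2$ is positive and decreasing there. In case (ii) Lemma \ref{bd3} gives $y_1(t)\to 0$, and in case (iii) $y_1$ is bounded and increasing (as $y_1'=y_2>0$); either way $y_1$ tends to a finite limit, so $\int_{t_0}^\infty y_2(s)\,ds=\lim y_1-y_1(t_0)<\infty$, i.e.\ $y_2\in L^1[t_0,\infty)$. A positive, decreasing, integrable function satisfies $\tfrac{t}{2}y_2(t)\le \int_{t/2}^t y_2\to 0$, hence $y_2(t)=o(1/t)$. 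Splitting the integral at $t/2$: over $[0,t/2]$ the kernel is $\le (t/2)^{-(n-\alpha)}$, so that piece is $\le (t/2)^{\alpha-n}\|y_2\|_{L^1}=O(t^{\alpha-n})$, and over $[t/2,t]$ monotonicity gives $y_2(s)\le y_2(t/2)$, so that piece is $\le y_2(t/2)\int_{t/2}^t(t-s)^{-(n-\alpha)}\,ds=O\big(y_2(t/2)\,t^{1+\alpha-n}\big)=o(t^{\alpha-n})$. This second piece is the main obstacle: with mere boundedness of $y_2$ one only gets the weaker rate $O(t^{1+\alpha-n})$, and the gain of the extra power of $t$ relies precisely on the sharp decay $y_2(t)=o(1/t)$, which is why the monotone-plus-$L^1$ argument is essential. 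Adding the two pieces gives the required $O(t^{\alpha-n})$ bound, completing the proof.
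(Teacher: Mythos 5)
Your proof is correct and follows essentially the same route as the paper's: the identity $x+aI^{\alpha-\beta}_{0^+}x={}^{C}D^{n-\alpha}_{0^+}y_1=\Gamma(1+\alpha-n)^{-1}\int_0^t y_2(s)(t-s)^{\alpha-n}\,ds$, the $O(t^{\alpha-\beta-1})$ absorption of the memory term, the sign of $y_2$ in case (i), and a split at $t/2$ in cases (ii)--(iii); your variant there (positive decreasing $y_2\in L^1$ forces $y_2(t)=o(1/t)$) is an equivalent repackaging of the paper's pointwise bound $y_2(t)\le C/(t-t_0)$ obtained from $\int_{t_0}^t y_2=y_1(t)-y_1(t_0)$. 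One small caveat: in case (ii) you should not invoke Lemma \ref{bd3}, since it requires (G1), (H), (K), which are not among the hypotheses of this lemma; the fact you actually need ($y_1$ increases to a finite limit, hence $y_2\in L^1[t_0,\infty)$) follows directly from $y_1<0$ and $y_1'=y_2>0$, exactly as the paper does it.
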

\begin{proof}
$\textup{(a)}$ Suppose that the assumption $\textup{(i)}$ is true. There exists a $t_0 > 0$ such that
    \begin{equation*}
        x(t) > 0,\; y_1(t) > 0,\; y_2(t) < 0,\; t \geq t_0.
    \end{equation*}
    From \eqref{x} and Lemma \ref{tddh}, it leads to
    \begin{align}\label{50}
        ^C D^{n-\alpha}_{0^+}y_1(t) = x(t) +a I^{\alpha-\beta}_{0^+}x(t),\;\forall t>0.
    \end{align}
    Taking $M:=\int_0^{t_0} |x(s)|ds$. For $t > t_0$, we have 
    \begin{align}\label{51}
        x(t) +a I^{\alpha-\beta}_{0^+}x(t) &= x(t) + \frac{a}{\Gamma(\alpha-\beta)}\int_0^t (t-s)^{\alpha-\beta-1}x(s)ds \nonumber\\
        & \geq x(t) + \frac{a}{\Gamma(\alpha-\beta)}\int_0^{t_0} (t-s)^{\alpha-\beta-1}x(s)ds \nonumber\\
        &\geq x(t) -\frac{a}{\Gamma(\alpha-\beta)}(t-t_0)^{\alpha-\beta-1}\int_0^{t_0}|x(s)|ds \nonumber \\
        &= x(t) -\frac{aM}{\Gamma(\alpha-\beta)}(t-t_0)^{\alpha-\beta-1}.
    \end{align}
    Define $M_1:= \int_0^{t_0} |y_2(s)|ds$. Due to $y_1(\cdot) \in AC[0,\infty)$, it follows from Lemma \ref{dhc} that, for $t >t_0$, 
    \begin{align}\label{52}
        ^C D^{n-\alpha}_{0^+}y_1(t) &= \frac{1}{\Gamma(1+\alpha-n)}\int_0^t\frac{y'_1(s)}{(t-s)^{n-\alpha}}ds \nonumber \\
        &= \frac{1}{\Gamma(1+\alpha-n)}\int_0^{t_0}\frac{y_2(s)}{(t-s)^{n-\alpha}}ds + \frac{1}{\Gamma(1+\alpha-n)}\int_{t_0}^{t}\frac{y_2(s)}{(t-s)^{n-\alpha}}ds \nonumber\\
        &\leq \frac{1}{\Gamma(1+\alpha-n)}\frac{1}{(t-t_0)^{n-\alpha}}\int_{0}^{t_0} |y_2(s)|ds \nonumber \\
        & = \frac{M_1}{\Gamma(1+\alpha-n)}\frac{1}{(t-t_0)^{n-\alpha}}.
    \end{align}
    Combining \eqref{50}--\eqref{52}, we get
    \begin{equation*}
        x(t) -\frac{aM}{\Gamma(\alpha-\beta)}(t-t_0)^{\alpha-\beta-1} \leq \frac{M_1}{\Gamma(1+\alpha-n)}\frac{1}{(t-t_0)^{n-\alpha}},\;\forall t>t_0.
    \end{equation*}
    This means that
    \begin{equation}\label{Tuan_add2}
        x(t) \leq \frac{aM}{\Gamma(\alpha-\beta)}(t-t_0)^{\alpha-\beta-1} +\frac{M_1}{\Gamma(1+\alpha-n)}\frac{1}{(t-t_0)^{n-\alpha}}, \; \forall t>t_0. 
    \end{equation}
    Notice that $\beta +1 -\alpha > n -\alpha$, from \eqref{Tuan_add2}, we conclude that there are constants $M_2 >0$ and $t_1 > t_0$ such that
    \begin{equation}\label{Tuan_add3}
        x(t) \leq \frac{M_2}{t^{n-\alpha}},\;\forall t>t_2.
    \end{equation}
    \noindent {\textup{(b)}}  Suppose that the assumption $\textup{(ii)}$ is true. We can find $t_0 > 0$ so that
    \begin{align*}
        x(t) > 0,\; y_1(t) < 0,\; y_2(t) > 0,\;y_3(t) <0,\; \forall t \geq t_0.
    \end{align*}
    Furthermore, $y_2(\cdot)$ is strictly decreasing on $[t_0,\infty)$. Thus, 
    \begin{equation*}
        -y_1(t_0) > y_1(t) - y_1(t_0) = \int_{t_0}^t y_2(s)ds \geq y_2(t)(t-t_0),\;\forall t>t_0,
    \end{equation*}
    which implies
    \begin{align}\label{53}
        y_2(t) \leq \frac{-y_1(t_0)}{t-t_0},\; \forall t > t_0.
    \end{align}
    Now, for $t > 2t_0$, we obtain the following estimates  
    \begin{align}\label{54}
        ^C D^{n-\alpha}_{0^+}y_1(t) &= \frac{1}{\Gamma(1+\alpha-n)}\int_0^t\frac{y_2(s)}{(t-s)^{n-\alpha}}ds \nonumber \\
        & = \frac{1}{\Gamma(1+\alpha-n)}\left(\int_0^{t_0} \frac{y_2(s)}{(t-s)^{n-\alpha}}ds+\int_{t_0}^{t/2}\frac{y_2(s)}{(t-s)^{n-\alpha}}ds+\int_{t/2}^t\frac{y_2(s)}{(t-s)^{n-\alpha}}ds\right) \nonumber\\
        &\leq \frac{1}{\Gamma(1+\alpha-n)}\left(\frac{1}{(t-t_0)^{n-\alpha}}\int_0^{t_0}|y_2(s)|ds + \left(\frac{2}{t}\right)^{n-\alpha}\int_{t_0}^{t/2}y_2(s)ds + I(t) \right)\nonumber\\
        &=\frac{1}{\Gamma(1+\alpha-n)}\left(\frac{M_1}{(t-t_0)^{n-\alpha}} + \left(\frac{2}{t}\right)^{n-\alpha}(y_1(t/2) -y_1(t_0)) + I(t)\right) \nonumber \\
        &\leq \frac{1}{\Gamma(1+\alpha-n)}\left(\frac{M_1}{(t-t_0)^{n-\alpha}} -y_1(t_0) \left(\frac{2}{t}\right)^{n-\alpha} + I(t)\right),
    \end{align}
    where $I(t):= \int_{t/2}^t\frac{x_2(s)}{(t-s)^{n-\alpha}}ds$. By \eqref{53},
    \begin{align}\label{55}
        I(t) &= \int_{t/2}^t\frac{y_2(s)}{(t-s)^{n-\alpha}}ds \leq \int_{t/2}^t \frac{-y_1(t_0)}{(s-t_0)(t-s)^{n-\alpha}}ds \nonumber\\
        &\leq\frac{-y_1(t_0)}{t/2-t_0}\frac{1}{\alpha-n+1}\left(\frac{t}{2}\right)^{\alpha-n+1},\; \forall t>2t_0.
    \end{align}
    According to \eqref{54} and \eqref{55}, there are $M_3 > 0$ and $t_2 > 2t_0$ satisfying
    \begin{align*}\label{56}
        ^C D^{n-\alpha}_{0^+}y_1(t) \leq \frac{M_3}{t^{n-\alpha}}, \; \forall t > t_2,
    \end{align*}
    which together with \eqref{51} leads to
    \begin{align*}
        x(t) \leq \frac{aM}{\Gamma(\alpha-\beta)}(t-t_0)^{\alpha-\beta-1} + \frac{M_3}{t^{n-\alpha}}, \;\forall t > t_2,
    \end{align*}
    and thus
    \begin{equation}\label{Tuan_add4}
        x(t) = O(t^{\alpha-n})\,\,\,\text{as}\; t \to \infty.
    \end{equation}
   \noindent {\textup{(c)}} Suppose that the assumption $\textup{(iii)}$ is true. There exits $t_0 > 0$ with
    \begin{align*}
        x(t) > 0,\; y_1(t) > 0,\; y_2(t) > 0,\;y_3(t) <0,\; t \geq t_0.
    \end{align*}
    Due to $y_1(\cdot)$ is bounded, there is a $M_4 > 0$ so that $y_1(t) < M_4$ for all $t\geq  t_0$. By the fact $y_2(\cdot)$ is positive and decreasing on $[t_0,\infty)$, we see 
    \begin{align*}
        M_4 > y_1(t) - y_1(t_0) = \int_{t_0}^t y_2(s)ds \geq y_2(t)(t-t_0),\; \forall t>t_0,
    \end{align*}
    which shows that
    \begin{align*}
        y_2(t) \leq \frac{M_4}{t-t_0},\; \forall t > t_0.
    \end{align*}
    Repeated the arguments as in $\textup{(b)}$ enables us to claim 
    \begin{equation}\label{Tuan_add5}
        x(t) = O(t^{\alpha-n})\,\,\,\text{as}\; t \to \infty.
    \end{equation}
    In short, based on \eqref{Tuan_add3}, \eqref{Tuan_add4} and \eqref{Tuan_add5}, we conclude that 
    \begin{equation*}
        x(t) = O(t^{\alpha-n})\,\,\,\text{as}\;t \to \infty.
    \end{equation*}
    The proof is complete.
\end{proof}
\begin{lemma}\label{bd8}
    Suppose that $\textup{(G1), (G2), (G3), (H), (K)}$ are true. Let $n$ be even and $x(\cdot)$ be an eventually positive solution of \eqref{btdk} satisfying \eqref{c}. Then the limit $\lim_{t \to \infty}y_1(t)$ is finite.
\end{lemma}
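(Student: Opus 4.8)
The function $y_1$ is eventually increasing: on the interval where \eqref{c} holds we have $y_1'=y_2>0$, so $\lim_{t\to\infty}y_1(t)$ exists in $(0,+\infty]$ and the entire content of the lemma is to exclude the value $+\infty$. The plan is to argue by contradiction: assume $\lim_{t\to\infty}y_1(t)=+\infty$ and extract from this a quantitative decay statement for $y_n$ (and $x$) that is incompatible with the assumed growth.

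First I would collect the consequences of the earlier lemmas. Since $n$ is even and \eqref{c} holds, $y_n(t)>0$ eventually, so Lemma \ref{bd4} applies and gives both $\limsup_{t\to\infty}x(t)<\infty$ and, from its proof, $\int^{\infty}f(x(s))\,ds<\infty$. Because $x$ is bounded and eventually positive, (G1) together with this integrability forces $\liminf_{t\to\infty}x(t)=0$ (if $x\ge\delta>0$ on a set of infinite measure, then $f(x)$ would be bounded below there and the integral would diverge). Next, Lemma \ref{bd5}, read contrapositively, rules out $y_2(t)\ge c_5>0$; since $y_2$ is positive and, by $y_2'=y_3<0$, decreasing, this yields $\lim_{t\to\infty}y_2(t)=0$. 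Running the same sign-and-monotonicity bookkeeping up the chain $y_i'=y_{i+1}$ shows each $y_i$, $i=3,\dots,n$, is monotone with a finite limit which must be $0$ (a nonzero limit of $y_i$ would make $y_{i-1}$ unbounded, contradicting that $y_{i-1}$ itself tends to a finite limit). Thus $y_i(t)\to 0$ for every $i=2,\dots,n$.

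The pivot of the argument is an iterated-integration identity. Using $y_i(t)=-\int_t^\infty y_{i+1}(s)\,ds$ for $i=2,\dots,n-1$ (legitimate precisely because each $y_i\to 0$) and Fubini's theorem $n-3$ times, one obtains, for $T$ large,
\begin{align*}
\int_T^\infty y_2(s)\,ds=\frac{1}{(n-2)!}\int_T^\infty (s-T)^{n-2}\,y_n(s)\,ds .
\end{align*}
Since $\int_T^\infty y_2(s)\,ds=\lim_{t\to\infty}y_1(t)-y_1(T)$, the contradiction hypothesis is \emph{equivalent} to the divergence $\int_T^\infty (s-T)^{n-2}y_n(s)\,ds=+\infty$. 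I would then produce the opposite inequality from the differential relation for $y_n$ in Lemma \ref{bd2}. Writing $y_n(s)=\int_s^\infty(-y_n'(u))\,du$ and keeping the forcing term $g$ and the polynomial remainder \emph{grouped together} with $q(t)f(x(t))$ (so that the cancellations responsible for $y_n\to 0$ are not destroyed), I would bound $y_n(s)$ by tails of $\int^\infty f(x)$ and of the explicit lower-order terms, then feed in the growth bounds (G2) for $|x|\ge 1$ and (G3) for $|x|<1$ via $\int^\infty f(x)<\infty$ to show $\int_T^\infty (s-T)^{n-2}y_n(s)\,ds<\infty$ — contradicting the divergence and forcing $\lim_{t\to\infty}y_1(t)<\infty$.

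\textbf{Main obstacle.} The delicate step is the final convergence estimate: the polynomial weight $(s-T)^{n-2}$ must be defeated by the decay of $y_n$, and this is exactly where the exponent restriction $\mu\in\bigl(0,\tfrac{n-2}{n-\beta}\bigr)$ in (G3) is not cosmetic — it is precisely the condition ensuring that, after transcribing the decay of $x$ through the fractional integrals defining $y_n$, the $f(x)$–contribution remains integrable against the weight $s^{n-2}$. Equally delicate is handling the forcing term $g$ and the individually growing polynomial terms in $y_n'$: these must be estimated only through the (small) combined quantity $y_n$ rather than term-by-term, and managing this bookkeeping, rather than any single inequality, is the technical heart of the proof.
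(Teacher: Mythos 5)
Your reduction is correct as far as it goes: under \eqref{c} one does have $y_i(t)\to 0$ for $i=2,\dots,n$ (Lemma \ref{bd4} plus the contrapositive of Lemma \ref{bd5} give $y_2\to 0$, and the monotonicity chain propagates this), and the Fubini identity $\lim_{t\to\infty}y_1(t)-y_1(T)=\frac{1}{(n-2)!}\int_T^\infty (s-T)^{n-2}y_n(s)\,ds$ is valid. But this identity is an \emph{equivalence} with the conclusion, so all the work is deferred to proving $\int_T^\infty (s-T)^{n-2}y_n(s)\,ds<\infty$, and the mechanism you propose for that step does not work. First, $y_n(s)=\int_s^\infty\bigl(q(u)f(x(u))+g(u)-P(u)\bigr)\,du$, where $P$ denotes the polynomial remainder from \eqref{hptvp}; assumption (K) gives only a \emph{lower} bound $g(t)\geq c_4t^{n-\beta-\sigma}$, and both $\int_s^\infty g(u)\,du$ and $\int_s^\infty P(u)\,du$ diverge individually (the exponents $n-\beta-\sigma$ and $n-\beta-1$ both exceed $-1$). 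So there is no upper bound on the tail $\int_s^\infty(g-P)$ available from the hypotheses, and "keeping the terms grouped" does not produce one — the only quantity that controls the group is $y_n$ itself, which is what you are trying to bound. Second, (G2) and (G3) are lower bounds on $|f|$: they convert a pointwise decay rate of $f(x(t))$ into a decay rate of $x(t)$, not conversely, and the unweighted integrability $\int^\infty f(x(s))\,ds<\infty$ from Lemma \ref{bd4} supplies no pointwise rate at all. You therefore have no way to start the bootstrap.

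The paper avoids both problems by working with $y_n'$ rather than $y_n$: it multiplies $y_n'(s)<-q(s)f(x(s))$ (the $g$ and polynomial terms are discarded at the outset because $-\mathrm{sgn}(x)g+P<0$ eventually, by \eqref{25}) by $(s-t_1)^{n-2}/(n-2)!$ and integrates by parts $n-2$ times; the boundary terms are $\frac{(-1)^{k+1}(t-t_1)^k}{k!}y_{k+2}(t)$, all nonpositive by the sign pattern \eqref{c}, so they can be dropped, yielding directly the \emph{weighted} bound $\int_{t_1}^\infty (s-t_1)^{n-2}f(x(s))\,ds<\infty$. This is the quantitative input you are missing: it gives $(t-t_1)^{n-2}f(x(t))\leq M_1$, whence (G2) forces $x(t)<1$ eventually, (G3) with $\mu<\tfrac{n-2}{n-\beta}$ gives $x(t)\leq t^{\beta-n}$, and then a two-sided estimate of ${}^C D^{n-\alpha}_{0^+}y_1$ (from above by $M_3t^{\alpha-n}$ via the decay of $x$, from below by $\frac{y_1(t)-y_1(t_4)-M_4}{\Gamma(\alpha-n+1)}(t-t_4)^{\alpha-n}$ via monotonicity of $y_1$) pins down $y_1$. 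If you want to keep your Fubini framing, you must still perform the paper's weighted integration by parts against $y_n'$ to obtain the weighted integrability of $f(x)$; without it the argument cannot close.
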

\begin{proof}
   It follows from the assumptions of the lemma that there is a $t_0 > 0$ such that $x(t) >0$, $y_1(t) > 0$, $y_2(t) >0$, $(-1)^iy_i(t) >0,$ $ i=3,\ldots,n$, $f(x(t)) > 0$, $q(t) \geq c_3 >0$ and $ g(t) \geq c_4t^{n-\beta-\sigma}$ for all $t \geq t_0$. From the arguments in the proof of Lemma \ref{bd4}, we can find a $t_1 > t_0 $ with
    \begin{align*}
        y_n'(t) < -q(t)f(x(t)), \; \forall t > t_1,
    \end{align*}
    which implies
    \begin{align*}
        \frac{1}{(n-2)!}&\int_{t_1}^t (s-t_1)^{n-2}q(s)f(x(s))ds < \frac{-1}{(n-2)!}\int_{t_1}^t (s-t_1)^{n-2}y'_n(s)ds \nonumber \\
        & = \frac{-(t-t_1)^{n-2}}{(n-2)!}y_n(t) + \frac{1}{(n-3)!}\int_{t_1}^t (s-t_1)^{n-3}y'_{n-1}(s)ds \nonumber \\
        &=\frac{-(t-t_1)^{n-2}}{(n-2)!}y_n(t) + \frac{(t-t_1)^{n-3}}{(n-3)!}y_{n-1}(t) - \frac{1}{(n-4)!}\int_{t_1}^t (s-t_1)^{n-4}y'_{n-2}(s)ds.
    \end{align*}
   We continue in this fashion to obtain 
    \begin{align}\label{49}
        \frac{1}{(n-2)!}&\int_{t_1}^t (s-t_1)^{n-2}q(s)f(x(s))ds < \frac{-1}{(n-2)!}\int_{t_1}^t (s-t_1)^{n-2}y'_n(s)ds \nonumber \\
        &=\frac{-(t-t_1)^{n-2}}{(n-2)!}y_n(t) + \frac{(t-t_1)^{n-3}}{(n-3)!}y_{n-1}(t)+\cdots+\frac{t-t_1}{1!}x_3(t) -\int_{t_1}^t y'2(s)ds \nonumber\\
        &=\frac{-(t-t_1)^{n-2}}{(n-2)!}y_n(t) + \frac{(t-t_1)^{n-3}}{(n-3)!}y_{n-1}(t)+\cdots+\frac{t-t_1}{1!}y_3(t) +y_2(t_1)-y_2(t)\nonumber\\
        &=\sum_{k=0}^{n-2}\frac{(-1)^{k+1}(t-t_1)^k}{k!}y_{k+2}(t) +y_2(t_1),\;t>t_1.
    \end{align}
    Because  $(-1)^{k+1}y_{k+2}(t) = - (-1)^{k+2}y_{k+2}(t)  < 0$ for $t \geq t_0,$ $k=0, 1, \ldots, n-2$, we have $\frac{(-1)^{k+1}(t-t_1)^k}{k!}y_{k+2}(t) \leq 0$ for $t\geq t_1$, $k=0,1,\ldots,n-2$. This combines with \eqref{49} leads to 
    \begin{align*}
        y_2(t_1) > \frac{1}{(n-2)!}\int_{t_1}^t (s-t_1)^{n-2}q(s)f(x(s))ds \geq \frac{c_3}{(n-2)!}\int_{t_1}^t (s-t_1)^{n-2}f(x(s))ds,\;\forall t>t_1.
    \end{align*}
    Letting $t \to \infty$ yields
    \begin{align*}
        \int_{t_1}^\infty (s-t_1)^{n-2}f(x(s))ds <\frac{(n-2)!y_2(t_1)}{c_3} < \infty.
    \end{align*}
    Choosing $M_1>0$ with
    \begin{align*}
        (t-t_1)^{n-2}f(x(t)) \leq M_1,\; \forall t \geq t_1.
    \end{align*}
    It deduces from $\textup{(G2)}$ that $x(t) < 1$ for all $t > t_2:=  \left(\frac{M_1}{c_1}\right)^{\frac{1}{n-2}}+t_1$. This together with $\textup{(G3)}$ gives
    \begin{align*}
        x(t) \leq \left(\frac{M_1}{c_2}\right)^{\frac{1}{\mu}}\frac{1}{(t-t_1)^{\frac{n-2}{\mu}}},\;\;\forall t>t_2,
    \end{align*}
    where $\mu \in \left(0,\frac{n-2}{n-\beta}\right)$. Hence, there is a $t_3 > t_2$ with
    \begin{align*}
        x(t) \leq \frac{1}{t^{n-\beta}}, \quad t\geq t_3.
    \end{align*}
    Put $M_2: = \int_0^{t_3}|x(s)|ds$. For $t >t_3$, then
    \begin{align*}
        x(t) + a I^{\alpha-\beta}_{0^+}x(t) &\leq \frac{1}{t^{n-\beta}} + \frac{a}{\Gamma(\alpha-\beta)}\int_0^{t_3} (t-s)^{\alpha-\beta-1}x(s)ds \nonumber\\
        &\hspace{3cm}+\frac{a}{\Gamma(\alpha-\beta)}\int_{t_3}^t (t-s)^{\alpha-\beta-1}x(s)ds \nonumber \\
        & \leq \frac{1}{t^{n-\beta}} + \frac{a}{\Gamma(\alpha-\beta)}(t-t_3)^{\alpha-\beta-1}\int_0^{t_3}|x(s)|ds \nonumber\\
        &\hspace{3cm}+ \frac{a}{\Gamma(\alpha-\beta)}\int_{t_3}^t (t-s)^{\alpha-\beta-1}s^{\beta-n}ds \nonumber\\ 
       &\leq  \frac{1}{t^{n-\beta}}  +\frac{aM_2}{\Gamma(\alpha-\beta)}(t-t_3)^{\alpha-\beta-1} + \frac{a}{\Gamma(\alpha-\beta)}\int_{0}^t (t-s)^{\alpha-\beta-1}s^{\beta-n}ds \nonumber\\
       &=\frac{1}{t^{n-\beta}}  +\frac{aM_2}{\Gamma(\alpha-\beta)}(t-t_3)^{\alpha-\beta-1}  \nonumber \\
       &\hspace{3cm}+\frac{a}{\Gamma(\alpha-\beta)}\int_{0}^1 t^{\alpha-\beta-1}(1-u)^{\alpha-\beta-1}t^{\beta-n}u^{\beta-n}tdu \nonumber\\
       &=t^{\beta-n}  +\frac{aM_2}{\Gamma(\alpha-\beta)}(t-t_3)^{\alpha-\beta-1} + \frac{a\Gamma(\beta-n+1)}{\Gamma(\alpha-n+1)}t^{\alpha-n}.
    \end{align*}
    From this there exist $M_3 > 0$ and $t_4 > t_3$ satisfying
    \begin{align}\label{eq51}
        ^C D^{n-\alpha}_{0^+}y_1(t)=x(t) + a I^{\alpha-\beta}_{0^+}x(t) \leq M_3t^{\alpha-n},\; \forall t \geq t_4.
    \end{align}
    Notice that for $t > t_4$, 
    \begin{align}\label{eq52}
        ^C D^{n-\alpha}_{0^+}y_1(t) &= \frac{1}{\Gamma(\alpha-n+1)}\int_0^t \frac{y_1'(s)}{(t-s)^{n-\alpha}}ds \nonumber \\
        & = \frac{1}{\Gamma(\alpha-n+1)}\int_0^{t_4} \frac{y_2(s)}{(t-s)^{n-\alpha}}ds + \frac{1}{\Gamma(\alpha-n+1)}\int_{t_4}^t \frac{y_1'(s)}{(t-s)^{n-\alpha}}ds \nonumber \\
        &\geq \frac{1}{\Gamma(\alpha-n+1)}\frac{-1}{(t-t_4)^{n-\alpha}}\int_0^{t_4}|y_2(s)|ds \nonumber \\
        &\hspace{3cm}+\frac{1}{\Gamma(\alpha-n+1)}\frac{1}{(t-t_4)^{n-\alpha}}(y_1(t) - y_1(t_4)) \nonumber \\
        &=\frac{y_1(t)-y_1(t_4)-M_4}{\Gamma(\alpha-n+1)}\frac{1}{(t-t_4)^{n-\alpha}},
    \end{align}
    where $M_4: = \int_0^{t_4}|x_2(s)|ds$. From \eqref{eq51} and \eqref{eq52}, it follows
    \begin{equation}\label{Tuan_add6}
        M_3\geq \frac{y_1(t)-y_1(t_4)-M_4}{\Gamma(\alpha-n+1)}\frac{t^{n-\alpha}}{(t-t_4)^{n-\alpha}},\;\;\forall t>t_4.
    \end{equation}
    Due to $y_1(\cdot)$ is positive and increasing on $[t_0,\infty)$. From \eqref{Tuan_add6}, we conclude that the limit $\lim_{t \to \infty}y_1(t)$
   exists and is finite.
\end{proof}
We can now prove the main results of this part.
\begin{proof}[Proof of Theorem \ref{dl3}]
    Let $x(\cdot)$ be a non-oscillatory solution of the initial value problem \eqref{btdk}--\eqref{gtd}. Without loss of generality, we assume that $x(\cdot)$ is eventually positive. From Lemma \ref{bd6}, $x(\cdot)$ satisfies \eqref{a}. Applying Lemma \ref{bd7}, we conclude $$x(t)=O(t^{\alpha-n})\;\;\text{as}\;\;t\to\infty.$$
\end{proof}
\begin{proof}[Proof of Theorem \ref{dl4}]
Let $x(\cdot)$ be a non-oscillatory solution of the initial value problem \eqref{btdk}--\eqref{gtd}. Without loss of generality, we assume that $x(\cdot)$ is eventually positive. Lemma \ref{bd6} shows that $x(\cdot)$ satisfies \eqref{b} or \eqref{c}. 
Applying Lemma \ref{bd7} and Lemma \ref{bd8}, we conclude that 
$$x(t)=O(t^{\alpha-n})\;\;\text{as}\;\;t\to\infty.$$
\end{proof}

\end{document}